\theoremstyle{plain}
\newtheorem{theorem}{Theorem}
\newtheorem{lemma}{Lemma}
\newtheorem{corollary}{Corollary}
\newtheorem{proposition}{Proposition}
\theoremstyle{definition}
\newtheorem{definition}{Definition}
\newtheorem{example}{Example}
\theoremstyle{remark}
\newtheorem{remark}{Remark}
\DeclareMathOperator{\co}{co}
\DeclareMathOperator{\cl}{cl}
\DeclareMathOperator{\extr}{ext}
\DeclareMathOperator{\dist}{dist}
\DeclareMathOperator{\dom}{dom}
\DeclareMathOperator{\interior}{int}
\DeclareMathOperator*{\argmin}{arg\,min}
\DeclareMathOperator*{\argmax}{arg\,max}
\author{Dolgopolik M.V.\footnote{Saint Petersburg State University, Saint Petersburg, Russia}
\footnote{Institute of Problems of Mechanical Engineering, Saint Petersburg, Russia}\footnote{The author was supported
by the Russian Foundation for Basic Research under grant no.~16--31-00056.}}
\title{A convergence analysis of the method of codifferential descent}
\begin{document}

\maketitle

\begin{abstract}
This paper is devoted to a detailed convergence analysis of the method of codifferential descent (MCD) developed by
professor V.F. Demyanov for solving a large class of nonsmooth nonconvex optimization problems. We propose a
generalization of the MCD that is more suitable for applications than the original method, and that utilizes only a part
of a codifferential on every iteration, which allows one to reduce the overall complexity of the method. With the use of
some general results on uniformly codifferentiable functions obtained in this paper, we prove the global convergence of
the generalized MCD in the infinite dimensional case. Also, we propose and analyse a quadratic regularization of the
MCD, which is the first general method for minimizing a codifferentiable function over a convex set. Apart from
convergence analysis, we also discuss the robustness of the MCD with respect to computational errors, possible step size
rules, and a choice of parameters of the algorithm. In the end of the paper we estimate the rate of convergence
of the MCD for a class of nonsmooth nonconvex functions that arise, in particular, in cluster analysis. We prove
that under some general assumptions the method converges with linear rate, and it convergence quadratically, provided a
certain first order sufficient optimality condition holds true.
\end{abstract}

\section{Introduction}

The class of codifferentiable functions was introduced by professor V.F. Demyanov
\cite{Demyanov_InCollection_1988,Demyanov1988,Demyanov1989} in the late 1980s. The main feature of this class of
nonsmooth functions is the fact that the approximation provided by the codifferential is \textit{continuous} with
respect to the reference point and nonhomogeneous w.r.t. the direction, while other standard tools of
nonsmooth analysis, such as various generalized derivatives and subdifferentials, provide approximations that
are homogeneous w.r.t. the direction, but are continuous w.r.t. the reference point only in the smooth case.
Furthermore, the class of codifferentiable functions includes all d.c. functions, forms a vector lattice closed under
all standard algebraic operations and composition, and admits a simple and \textit{exact} calculus \cite{DemRub_book}
that can be easily implemented on a computer (cf. \textit{fuzzy} or \textit{approximate} calculus rules in
\cite{Morukhovich,Penot_book}). The codifferential calculus was extended to the infinite dimensional case 
in \cite{Zaffaroni,Dolgopolik,Dolgopolik_AbstractConv}.

Codifferentiable functions were introduced as a natural modification of the class of quasidifferentiable functions.
Recall that a function $f$ is called quasidifferentiable at a point $x$, if $f$ is directionally differentiable at $x$,
and there exists a pair of convex compact sets $\mathcal{D} f(x) = [\underline{\partial} f(x), \overline{\partial}
f(x)]$, called \textit{a quasidifferential} of $f$ at $x$, such that 
$f'(x, h) = \max_{v \in \underline{\partial} f(x)} \langle v, h \rangle 
+ \min_{w \in \overline{\partial} f(x)} \langle w, h \rangle$. Despite their many useful properties and various
applications \cite{DemRub_book,QuasidiffMechanics,Quasidifferentiability_book,PalRecUrb,Uderzo}, it appeared to be very
difficult to design effective numerical methods for minimizing general quasidifferentiable functions, and only
methods for minimizing some specific classes of these functions were proposed (see, e.g.,
\cite{Barirog_Survey,Bagirov2002,LudererWeigelt2003}). Since quasidifferentiable functions are locally Lipschitz
continuous, one can use general methods of nonsmooth optimization, such as 
bundle methods \cite{Makela2002,HaaralaMiettinenMakela,HareSagatizabal,FuduliGaudioso}, 
gradient sampling methods \cite{BurkeLewisOverton,Kiwiel2010,HareNutini,CurtisQue2013}, 
quasi-Newton methods \cite{LewisOverton2013,KeskarWachter},
discrete gradient methods \cite{BarigovKarasozenSezer,BagirovRubinovZhang} and many others
\cite{ConnScheinbergVicent_book,KarmitsaBagriovMakela2012,BagirovKarmitsaMakela_book}, in order to minimize a
quasidifferentiable function. However, these methods do not utilize any additional information about function's
behaviour that a quasidifferential can provide.

Unlike the case of quasidifferentiable functions, it turned out to be possible to design an effective numerical method
for minimizing codifferentiable functions called \textit{the method of codifferential descent} (MCD)
\cite{DemRub_book}. A modification of the MCD aiming at reducing the complexity of the original method was proposed in
\cite{DemBagRub}. Methods for minimizing nonsmooth convex and d.c. functions combining the ideas of the MCD and bundle
methods were developed in \cite{BagGanUgonTor,TorKarBag,BagUgon,TorBagKar}. A trust region method for minimizing
codifferentiable functions was studied in \cite{Andramonov}. Finally, an infinite-dimensional version of the MCD was
applied to some problems of the calculus of variations \cite{DemyanovTamasyan2011,DemyanovTamasyan2014} and optimal
control problems \cite{Fominyh}. However, theoretical results on a convergence of the MCD are very scarce, and only
some results on the global convergence of this method and its modifications are known in the finite dimensional case.
Furthermore, in \cite{DemRub_book,DemBagRub} the global convergence of the MCD was proved under the assumption that the
objective function is codifferentiable uniformly in directions in a neighbourhood of a limit point, and it is unclear
how to verify this assumption in any particular case, while the convergence analysis in
\cite{BagGanUgonTor,TorKarBag,BagUgon,TorBagKar} heavily relies on the convexity or the d.c. structure of the objective
function. Let us finally note that the MCD, as well as the method of truncated codifferential \cite{DemBagRub}, are
unrealisable in the general case, since they cannot be directly implemented in practice 
(see Remark~\ref{Remark_MCD_Unrealisable} below).

The main goal of this paper is to provide a comprehensive convergence analysis of the MCD in the general case. To this
end, we introduce and study a new class of nonsmooth functions called \textit{uniformly codifferentiable} functions. We
prove that this class of functions contains almost all codifferentiable functions appearing in applications. We also
introduce a new generalized version of the MCD that, in essence, coincides with the practical implementation of the
original MCD, and is reduced to the MCD or the method of truncated codifferential \cite{DemBagRub} under a certain
choice of parameters. Furthermore, our version of the MCD allows one to use only a part of a codifferential on every
iteration, which might significantly reduce the overall complexity of the method for some specific problems. Under some
natural and easily verifiable assumptions we prove the global convergence of the generalized MCD in the infinite
dimensional case, and for the first time prove the convergence of a non-stationarity measure to zero for a
sequence generated by the MCD. Finally, we introduce and study a \textit{quadratic regularization of the MCD}, which is
the first general method for minimizing codifferentiable functions over convex sets. This method can be viewed as a
generalization of the Pschenichnyi-Pironneau-Polak (PPP) algorithm for min-max problems
\cite{Pschenichny,PironneauPolak,DaugavenMalozemov,Polak} to the case of codifferentiable functions.

In the end of the paper we for the first time study the rate of convergence of the MCD. Since an analysis of the rate of
convergence of the MCD in the general case requires the use of very complicated and cumbersome second order
approximations of codifferentiable functions such as the so-called second order codifferentials \cite{DemRub_book}, for
the sake of simplicity we study only the rate of convergence of the quadratic regularization of the MCD for a particular
class of nonsmooth nonconvex functions that arise, in particular, in cluster analysis \cite{DemBagRub}. Under
some general assumptions we prove that the quadratic regularization of the MCD converges with linear rate, and it
converges quadratically, if the limit point of a sequence generated by the method satisfies certain first order
sufficient optimality conditions. These results can be viewed as an interesting example that sheds some light on the
way the MCD performs for various problems, and how the rate of convergence of this method can be estimated in the
general case.

The paper is organized as follows. In Section~\ref{Section_MCD} we present a general scheme of the MCD and prove the
global convergence of the method. In this section we also analyse the robustness of the MCD with respect to
computational errors, and discuss possible step size rules and a choice of parameters of the method.
Section~\ref{Section_QR_MCD} is devoted to the quadratic regularization of the MCD, while the rate of convergence of
this method for a class of nonsmooth nonconvex functions is studied in Section~\ref{Section_Rate_of_Convergence}. For
the reader's convenience, some general results on codifferentiable function are given in
Section~\ref{Section_CodifferentiableFunctions}. This section can be viewed as a brief, but thorough and almost
self-contained introduction into the codifferential calculus. However, it should be noted that many results presented in
Section~\ref{Section_CodifferentiableFunctions}, such as the Lipschitz continuity of codifferentiable functions and
general results on uniformly codifferentiable functions, are completely new.

\section{Codifferentiable functions}
\label{Section_CodifferentiableFunctions}

Let $\mathcal{H}$ be a real Hilbert space, and let a function $f \colon U \to \mathbb{R}$ be defined in a neighbourhood
$U$ of a point $x \in \mathcal{H}$. Recall that $f$ is said to be \textit{codifferentiable} at $x$, if there exist
weakly compact convex sets $\underline{d} f(x), \overline{d} f(x) \subset \mathbb{R} \times \mathcal{H}$ such that for
any $\Delta x \in \mathcal{H}$ one has
\begin{equation*}
\begin{split}
  \lim_{\alpha \to +0} \frac{1}{\alpha} \Big| f(x + \alpha \Delta x) - f(x)
  &- \max_{(a, v) \in \underline{d} f(x)} \big( a + \alpha \langle v, \Delta x \rangle \big) \\
  &- \min_{(b, w) \in \overline{d} f(x)} \big( b + \alpha \langle w, \Delta x \rangle \big) \Big| = 0,
\end{split}
\end{equation*}
and
\begin{equation} \label{CodiffApproxZeroAtZero}
  \max_{(a, v) \in \underline{d} f(x)} a + \min_{(b, w) \in \overline{d} f(x)} b = 0.
\end{equation}
Here $\langle \cdot, \cdot \rangle$ is the inner product in $\mathcal{H}$. The pair 
$D f(x) = [ \underline{d} f(x), \overline{d} f(x) ]$ is called \textit{a codifferential} of $f$ at $x$, the set
$\underline{d} f(x)$ is called \textit{a hypodifferential} of $f$ at $x$, while the set $\overline{d} f(x)$ is referred
to as \textit{a hyperdifferential} of $f$ at $x$.

\begin{remark}
One can verify that the function $f$ is codifferentiable at $x$ iff there exist continuous convex functions
$\Phi, \Psi \colon \mathcal{H} \to \mathbb{R}$ such that $\Phi(0) - \Psi(0) = 0$, and for any $\Delta x \in \mathcal{H}$
one has
$$
  \lim_{\alpha \to +0} \frac{1}{\alpha} \Big| f(x + \alpha \Delta x) - f(x) 
  - \big( \Phi(\alpha \Delta x) - \Psi(\alpha \Delta x) \big) \Big| = 0
$$
(see~\cite{Dolgopolik_AbstractConv}, Example~3.10). Thus, the codifferentiable functions are exactly those nonsmooth
functions that can be locally approximated by a d.c. function, which, in particular, implies that any d.c. function is
codifferentiable.
\end{remark}

Observe that a codifferential of $f$ at $x$ is not unique. Indeed, if $D f(x)$ is a codifferential of $f$ at $x$,
and $C \subset \mathbb{R} \times \mathcal{H}$ is a weakly compact convex set, then the pair
$[\underline{d} f(x) + C, \overline{d} f(x) - C]$ is a codifferential of $f$ at $x$, as well. Note also that not
all codifferentials of $f$ at $x$ have the form $[\underline{d} f(x) + C, \overline{d} f(x) + C]$ for some weakly
compact convex set $C$. For instance, applying the inequality $f(x) - f(y) \ge \langle \nabla f(y), x - y \rangle$ one
can check that for any $c \ge 0$ the pair $[\co\{ (-y^2, 2y) \colon y \in [-c, c] \}, \{ 0 \}]$ is a codifferential
of the function $f(x) = x^2$ at the point $x = 0$.

If there exists a codifferential of $f$ at $x$ of the form $[\underline{d} f(x), \{ 0 \}]$, then the function $f$ is
called \textit{hypodifferentiable} at $x$. Similarly, if there exists a codifferential of $f$ at $x$ of the form
$[\{ 0 \}, \overline{d} f(x)]$, then $f$ is said to be \textit{hyperdifferentiable} at $x$. The function $f$ is called
\textit{continuously codifferentiable} at the point $x$, if $f$ is codifferentiable in a neighbourhood of $x$ (i.e. at
every point of this neighbourhood), and there exists a \textit{codifferential mapping} $D f(\cdot)$ defined in
neighbourhood of $x$ and such that the multifunctions $\underline{d} f(\cdot)$ and $\overline{d} f(\cdot)$ are Hausdorff
continuous at $x$. In this case the codifferential mapping $D f(\cdot)$ is called \textit{continuous} at $x$.
Finally, a function $f$ defined in a neighbourhood $U$ of a set $A \subset \mathcal{H}$ is called continuously
codifferentiable on the set $A$, if $f$ is codifferentiable at every point of the set $A$, and there exists a
codifferential mapping $D f(\cdot)$ such that the corresponding multifunctions $\underline{d} f(\cdot)$ and
$\overline{d} f(\cdot)$ are Hausdorff continuous on the set $A$. Continuously hypodifferentiable and continuously
hyperdifferentiable functions are defined in the same way.

\begin{remark}
From this point onwards, when we consider a continuously codifferentiable function, we suppose that the corresponding
codifferential mapping $D f(\cdot)$ is continuous (i.e. we do not consider discontinuous codifferential mappings for
continuously codifferentiable functions).
\end{remark}

Let us give some simple examples of continuously codifferentiable functions.

\begin{example} \label{Example_Differentiable}
Let $f$ be G\^{a}teaux differentiable at a point $x \in \mathcal{H}$. Then $f$ is codifferentiable at $x$, and both
pairs $[\{(0, \nabla f(x))\}, \{0\}]$ and $[\{0\}, \{(0, \nabla f(x)\}]$ are codifferentials of $f$ at $x$, i.e. $f$ is
both hypodifferentiable and hyperdifferentiable at $x$. If, in addition, $f$ is continuously differentiable at $x$, then
$f$ is continuously codifferentiable at this point.
\end{example}

\begin{example}
Let $f(x) = \| A x + b \|$, where $A \colon \mathcal{H} \to \mathcal{H}$ is a bounded linear operator, and 
$b \in \mathcal{H}$ is fixed. Then
\begin{align*}
  f(x + \Delta x) - f(x) &= \max_{v \in B(0, 1)} \big\langle v, A(x + \Delta x) + b \big\rangle - \| A x + b \| \\
  &= \max_{(a, v) \in \underline{d} f(x)} \big( a + \langle v, \Delta x \rangle \big),
\end{align*}
where 
$$
  \underline{d} f(x) = 
  \Big\{ \Big(\langle v, A x + b \rangle - \| A x + b \|, A^* v \Big) \Bigm| v \in B(0, 1) \Big\},
$$
the operator $A^*$ is the adjoint of $A$, and $B(y, r) = \{ z \in \mathcal{H} \mid \| z - y \| \le r \}$.
The set $\underline{d} f(x)$ is convex and weakly compact (the weak compactness follows from the fact that the
operator $A^*$ is an endomorphism of the space $\mathcal{H}$ endowed with the weak topology). Furthermore, it is easy to
see that the mapping $\underline{d} f(\cdot)$ is Hausdorff continuous. Thus, the function $f(x) = \| A x + b \|$ is
continuously hypodifferentiable on the entire space $\mathcal{H}$.
\end{example}

\begin{example}
Let $f$ be a piecewise affine function of the form $f(x) = f_1(x) + f_2(x)$ with
$$
  f_1(x) = \max_{i \in I} \big( a_i + \langle v_i, x \rangle \big), \quad
  f_2(x) = \min_{j \in J} \big( b_j + \langle w_j, x \rangle \big),
$$
where $I$ and $J$ are some finite index sets, $a_i, b_j \in \mathbb{R}$ and $v_i, w_j \in \mathcal{H}$. Then the
function $f$ is continuously codifferentiable on $\mathcal{H}$, and one can define
$\underline{d} f(x) = \{ (a_i + \langle v_i, x \rangle - f_1(x), v_i) \mid i \in I \}$ and
$\overline{d} f(x) = \{ (b_j + \langle w_j, x \rangle - f_2(x), w_j) \mid j \in J \}$.
\end{example}

\begin{example} \label{Example_ConvexFunction}
Let $f$ be a convex function, and $U \subset \mathcal{H}$ be a bounded open set such that $f$ is Lipschitz continuous
on $U$. By the definition of subgradient for any $y \in U$, $v \in \partial f(y)$ and $x \in \mathcal{H}$ one has 
$f(x) \ge f(y) + \langle v, x - y \rangle$, and this inequality turns into an equality when $x = y$. Therefore, 
$f(x) = \max_{(a, v) \in C} (a + \langle v, x \rangle)$ for any $x \in U$, where
$$
  C = \big\{ (f(y) - \langle v, y \rangle, v) \in \mathbb{R} \times \mathcal{H} \bigm| 
  v \in \partial f(y), \: y \in U \big\}.
$$
Hence taking into account the fact that $U$ is an open set one obtains that for any $x \in U$ there exists $r > 0$ such
that
$$
  f(x + \Delta x) - f(x) = \max_{(a, v) \in \underline{d} f(x)} (a + \langle v, \Delta x \rangle)
  \quad \forall \Delta x \in B(x, r),
$$
where
$$
  \underline{d} f(x) = \cl \co \Big\{ 
  \big( f(y) - f(x) + \langle v, x - y \rangle, v \big) \in \mathbb{R} \times \mathcal{H} \Bigm|
  v \in \partial f(y), \: y \in U \Big\}.
$$
Note that the set $\underline{d} f(x)$ is bounded (and thus weakly compact) due to the fact that $f$ is Lipschitz
continuous on $U$. Furthermore, one can verify that the mapping $\underline{d} f(\cdot)$ is Hausdorff continuous.
Hence, in particular, one gets that a continuous convex function $f \colon \mathcal{H} \to \mathbb{R}$ is continuously
hypodifferentiable at every point $x \in \mathcal{H}$. On the other hand, if $f$ is continuously codifferentiable at a
given point, then it is Lipschitz continuous in a neighbourhood of this point (see
Corollary~\ref{Corollary_LipschitzContinuity} below). Thus, a proper convex function 
$f \colon \mathcal{H} \to \mathbb{R} \cup \{ \pm \infty \}$ is continuously hypodifferentiable at a point $x$ iff
$x \in \interior \dom f$, and $f$ is continuous at this point. If $\mathcal{H}$ is finite dimensional, then $f$ is
continuously hypodifferentiable at every interior point of its effective domain. 
\end{example}

Let a function $f$ be defined and codifferentiable on an open set $U \subset \mathcal{H}$. Observe that without loss of
generality one can suppose that
\begin{equation} \label{BothApproxZeroAtZero}
  \max_{(a, v) \in \underline{d} f(x)} a = \min_{(b, w) \in \overline{d} f(x)} b = 0 \quad \forall x \in U,
\end{equation}
since otherwise one can use the codifferential mapping 
$\widehat{D f}(\cdot) = [\widehat{\underline{d} f}(\cdot), \widehat{\overline{d} f}(\cdot)]$ of the function $f$ of 
the form
\begin{gather*}
  \widehat{\underline{d} f}(x) = \big\{ (a - a(x), v) \bigm| (a, v) \in \underline{d} f(x) \big\}, \\
  \widehat{\overline{d} f}(x) = \big\{ (b - b(x), w) \bigm| (b, w) \in \overline{d} f(x) \big\}
\end{gather*}
where $a(x) = \max_{(a, v) \in \underline{d} f(x)} a$, and $b(x) = \min_{(b, w) \in \overline{d} f(x)} b$ 
(recall that $a(x) = - b(x)$ by \eqref{CodiffApproxZeroAtZero}). Observe also that if the codifferential mapping 
$D f(\cdot)$ is continuous, then the codifferential mapping $\widehat{D f}(\cdot)$ is continuous as well. Note also
that the codifferential mappings from the examples above satisfy equalities \eqref{BothApproxZeroAtZero}. Therefore,
hereinafter we suppose that \eqref{BothApproxZeroAtZero} always holds true.

Let a function $f$ be defined in a neighbourhood of a point $x \in \mathcal{H}$ and codifferentiable at this point.
Then, as it is easy to see, the function $f$ is directionally differentiable at $x$, and 
$f'(x, h) = \Phi'(0, h) + \Psi'(0, h)$ for all $h \in \mathcal{H}$, where
$$
  \Phi(y) = \max_{(a, v) \in \underline{d} f(x)} \big( a + \langle v, y \rangle \big), \quad
  \Psi(y) = \min_{(b, w) \in \overline{d} f(x)} \big( b + \langle w, y \rangle \big).
$$
Applying the theorem about the subdifferential of the supremum of a family of convex functions (see, e.g.,
\cite[Thm.~4.2.3]{IoffeTihomirov}) and equalities~\eqref{BothApproxZeroAtZero} one obtains that $f$ is
quasidifferentiable at $x$, and the pair $\mathscr{D}f(x) = [\underline{\partial} f(x), \overline{\partial} f(x)]$ with
\begin{equation} \label{Quasidiff_Of_Codiff_Func}
  \underline{\partial} f(x) = \big\{ v \in \mathcal{H} \bigm| (0, v) \in \underline{d} f(x) \big\}, \quad
  \overline{\partial} f(x) = \big\{ w \in \mathcal{H} \bigm| (0, w) \in \overline{d} f(x) \big\}
\end{equation}
is a quasidifferential of $f$ at $x$. Note that both sets $\underline{\partial} f(x)$ and
$\overline{\partial} f(x)$ are nonempty, weakly compact and convex. Hereinafter, we consider only the
quasidifferential mapping $\mathscr{D} f(\cdot)$ of the form \eqref{Quasidiff_Of_Codiff_Func} corresponding to a chosen
co\-dif\-ferential mapping of the function $f$.

With the use of the standard first order optimality condition in terms of directional derivative we arrive at the
following result, which is well-known in the finite dimensional case.

\begin{proposition} \label{Prp_NessOptCond}
Let a function $f$ be defined in a neighbourhood of a point $x^* \in \mathcal{H}$ and codifferentiable at this point.
Suppose also that $x^*$ is a point of local minimum of the function $f$. Then
\begin{equation} \label{CodiffOptCond}
  0 \in \underline{d} f(x^*) + (0, w) \quad \forall (0, w) \in \overline{d} f(x^*).
\end{equation}
Moreover, optimality condition \eqref{CodiffOptCond} holds true iff $f'(x^*, \cdot) \ge 0$; thus, it is independent of
the choice of a codifferential, i.e. if \eqref{CodiffOptCond} holds true for one codifferential of $f$ at $x^*$, then it
holds true for all codifferentials of $f$ at $x^*$.
\end{proposition}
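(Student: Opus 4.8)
The plan is to reduce everything to the quasidifferential representation of $f'(x^*, \cdot)$ established just before the proposition, and then to recast condition \eqref{CodiffOptCond} as a set inclusion between the component sets of the quasidifferential. First I would invoke the standard first order optimality condition: since $x^*$ is a point of local minimum and $f$ is directionally differentiable at $x^*$, for every $h \in \mathcal{H}$ and all sufficiently small $\alpha > 0$ one has $f(x^* + \alpha h) \ge f(x^*)$, whence dividing by $\alpha$ and letting $\alpha \to +0$ yields $f'(x^*, h) \ge 0$. Thus the whole statement follows once I show that the inequality $f'(x^*, \cdot) \ge 0$ is equivalent to \eqref{CodiffOptCond}; and since $f'(x^*, \cdot)$ does not depend on the choice of codifferential, the asserted independence is then automatic.

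The next step is a pure translation of definitions. Using the quasidifferential $\mathscr{D} f(x^*) = [\underline{\partial} f(x^*), \overline{\partial} f(x^*)]$ from \eqref{Quasidiff_Of_Codiff_Func}, I would note that the membership $(0, w) \in \overline{d} f(x^*)$ is by definition the same as $w \in \overline{\partial} f(x^*)$, while $0 \in \underline{d} f(x^*) + (0, w)$ forces the first coordinate to vanish and is therefore equivalent to $(0, -w) \in \underline{d} f(x^*)$, i.e. to $-w \in \underline{\partial} f(x^*)$. Consequently, condition \eqref{CodiffOptCond} is nothing but the inclusion $-\overline{\partial} f(x^*) \subseteq \underline{\partial} f(x^*)$.

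It therefore remains to prove that
\[
  f'(x^*, h) = \max_{v \in \underline{\partial} f(x^*)} \langle v, h \rangle + \min_{w \in \overline{\partial} f(x^*)} \langle w, h \rangle \ge 0 \quad \forall h \in \mathcal{H}
\]
if and only if $-\overline{\partial} f(x^*) \subseteq \underline{\partial} f(x^*)$. The implication from the inclusion is easy: for a given $h$ one picks $w_0 \in \overline{\partial} f(x^*)$ attaining the minimum, and since $-w_0 \in \underline{\partial} f(x^*)$ the maximum is at least $\langle -w_0, h \rangle$, so the two terms cancel to a nonnegative value. The converse is the only delicate point, and I would argue by contraposition: if some $w_0 \in \overline{\partial} f(x^*)$ satisfies $-w_0 \notin \underline{\partial} f(x^*)$, then, because $\underline{\partial} f(x^*)$ is a weakly compact (hence weakly closed) convex set, a strict separation theorem yields an $h \in \mathcal{H}$ with $\langle -w_0, h \rangle > \max_{v \in \underline{\partial} f(x^*)} \langle v, h \rangle$, the maximum being attained thanks to weak compactness and the weak continuity of $\langle \cdot, h \rangle$. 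Evaluating $f'(x^*, h)$ for this $h$ and using $\min_{w \in \overline{\partial} f(x^*)} \langle w, h \rangle \le \langle w_0, h \rangle$ makes the two terms sum to a strictly negative number, contradicting $f'(x^*, \cdot) \ge 0$.

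The main obstacle is precisely this separation step in the infinite dimensional setting: one must guarantee that the relevant maximum is genuinely attained and that $-w_0$ can be strictly separated from the weakly compact convex set $\underline{\partial} f(x^*)$, which is exactly where the weak compactness of the hypodifferential (and hence of $\underline{\partial} f(x^*)$) is indispensable. Everything else is a routine unwinding of the definitions \eqref{Quasidiff_Of_Codiff_Func} and the normalization \eqref{BothApproxZeroAtZero}.
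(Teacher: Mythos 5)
Your proof is correct and follows essentially the same route as the paper's: local minimality gives $f'(x^*,\cdot)\ge 0$, the quasidifferential representation \eqref{Quasidiff_Of_Codiff_Func} translates \eqref{CodiffOptCond} into the condition $0\in\underline{\partial} f(x^*)+w$ for all $w\in\overline{\partial} f(x^*)$ (your inclusion $-\overline{\partial} f(x^*)\subseteq\underline{\partial} f(x^*)$ is just a restatement of this), and the independence claim follows because $f'(x^*,\cdot)$ does not depend on the codifferential. The only difference is that you spell out the strict separation argument in the weakly compact setting, which the paper invokes implicitly in its ``this condition is valid iff'' step.
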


\begin{proof}
From the fact that $x^*$ is a point of local minimum of $f$ it follows that $f'(x^*, \cdot) \ge 0$. Hence by the
definition of quasidifferentiable function one has
$$
  f'(x^*, h) = \max_{v \in \underline{\partial} f(x^*)} \langle v, h \rangle
  + \min_{w \in \overline{\partial} f(x^*)} \langle w, h \rangle \ge 0 \quad \forall h \in \mathcal{H}.
$$
Clearly, this inequality holds true iff
$$
  \max_{v \in \underline{\partial} f(x^*)} \langle v + w, h \rangle \ge 0 \quad 
  \forall h \in \mathcal{H} \quad \forall w \in \overline{\partial} f(x^*).
$$
In turn, this condition is valid iff $0 \in \underline{\partial} f(x^*) + w$ for all 
$w \in \overline{\partial} f(x^*)$ or, equivalently, iff \eqref{CodiffOptCond} holds true
(see \eqref{Quasidiff_Of_Codiff_Func}). Finally, note that since the condition $f'(x^*, \cdot) \ge 0$ is independent of
the choice of a codifferential, the optimality condition \eqref{CodiffOptCond} is independent of the choice of a
codifferential as well.	 
\end{proof}

\begin{remark} 
{(i)~The independence of optimality conditions in quasidifferential programming of the choice of quasidifferentials was
studied in \cite{Luderer,LudererRosigerWurker}.
}

\noindent{(ii)~Note that from equalities \eqref{BothApproxZeroAtZero} it follows that if $f$ is codifferentiable at a
point $x$, then there exists at least one pair $(0, w) \in \overline{d} f(x)$, i.e. the optimality condition
\eqref{CodiffOptCond} cannot be satisfied vacuously. 
}
\end{remark}

\begin{corollary} \label{Corollary_OptimalityCond}
Let $f$ and $x^*$ be as in the proposition above. Then the optimality condition \eqref{CodiffOptCond} holds true iff
$0 \in \underline{d} f(x^*) + (0, w)$ for all $(0, w) \in \extr \overline{d} f(x^*)$, where ``$\extr$'' stands
for the set of extreme points of a convex set.
\end{corollary}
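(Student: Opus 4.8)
The plan is to reformulate optimality condition \eqref{CodiffOptCond} as a single set inclusion and then reduce it to extreme points via the Krein--Milman theorem. First I would observe that for a fixed element the inclusion $0 \in \underline{d} f(x^*) + (0, w)$ holds iff $(0, -w) \in \underline{d} f(x^*)$, i.e. iff $(0, w) \in -\underline{d} f(x^*)$. Hence, introducing the set
$$
  F = \big\{ (0, w) \in \overline{d} f(x^*) \big\} = \big\{ (b, w) \in \overline{d} f(x^*) \bigm| b = 0 \big\},
$$
condition \eqref{CodiffOptCond} is exactly the inclusion $F \subseteq -\underline{d} f(x^*)$, whereas the condition stated in the corollary reads $F' \subseteq -\underline{d} f(x^*)$, where $F' = \{(0, w) \in \extr \overline{d} f(x^*)\}$. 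I would then record that $F$ is nonempty by \eqref{BothApproxZeroAtZero}, and weakly compact and convex, being the intersection of the weakly compact convex set $\overline{d} f(x^*)$ with the weakly closed hyperplane $\{b = 0\}$, and that $-\underline{d} f(x^*)$ is weakly compact and convex since $\underline{d} f(x^*)$ is.

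The ``only if'' direction is immediate: as $\extr \overline{d} f(x^*) \subseteq \overline{d} f(x^*)$, every $(0, w) \in \extr \overline{d} f(x^*)$ lies in $F$, so \eqref{CodiffOptCond} specializes to the condition of the corollary. For the converse, the key step is to show that $F$ is a \emph{face} of $\overline{d} f(x^*)$. Indeed, by \eqref{BothApproxZeroAtZero} the set $F$ is precisely the set of minimizers of the weakly continuous linear functional $(b, w) \mapsto b$ over $\overline{d} f(x^*)$, and the set of minimizers of a linear functional over a convex set is a face. For any face $F$ of a convex set $K$ one has $\extr F = F \cap \extr K$: a point of $\extr K$ lying in $F$ is trivially extreme in $F$, and conversely if a point of $\extr F$ were a proper convex combination of two points of $K$, the face property would force both points into $F$, contradicting its extremality in $F$. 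Applying this with $K = \overline{d} f(x^*)$ gives $\extr F = F \cap \extr \overline{d} f(x^*) = F'$.

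It then remains to deduce $F \subseteq -\underline{d} f(x^*)$ from $\extr F = F' \subseteq -\underline{d} f(x^*)$. Since $F$ is a nonempty weakly compact convex subset of $\mathbb{R} \times \mathcal{H}$ endowed with the weak topology, which is a Hausdorff locally convex space, the Krein--Milman theorem yields $F = \cl \co (\extr F)$, the closure being taken in the weak topology. As $-\underline{d} f(x^*)$ is convex and weakly closed and contains $\extr F$, it contains $\co(\extr F)$ and hence its weak closure $F$. Thus $F \subseteq -\underline{d} f(x^*)$, which is exactly \eqref{CodiffOptCond}.

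The hard part will be the infinite dimensional justification: one must invoke the Krein--Milman theorem and the face identity $\extr F = F \cap \extr \overline{d} f(x^*)$ in the weak topology, rather than relying on finite dimensional Minkowski--Carath\'eodory representations. Concretely, this amounts to verifying that $F$ is genuinely weakly compact (from the weak compactness of $\overline{d} f(x^*)$ and the weak closedness of $\{b = 0\}$) and nonempty (from \eqref{BothApproxZeroAtZero}), and that $-\underline{d} f(x^*)$ is weakly closed (immediate from its weak compactness); the remaining manipulations are routine.
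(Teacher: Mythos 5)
Your proof is correct, and it takes a genuinely different route from the paper's. The paper works at the level of the quasidifferential \eqref{Quasidiff_Of_Codiff_Func} and argues by contradiction: if \eqref{CodiffOptCond} fails for some $w \in \overline{\partial} f(x^*)$, it strictly separates $0$ from the weakly compact convex set $\underline{\partial} f(x^*) + w$ (obtaining $h$ and $c > 0$ with $\langle v + w, h \rangle \ge c$ for all $v \in \underline{\partial} f(x^*)$), then invokes Krein--Milman to pick $w_0 \in \co \extr \overline{\partial} f(x^*)$ with $|\langle w - w_0, h \rangle| < c/2$, concluding $0 \notin \underline{\partial} f(x^*) + w_0$, while the corollary's hypothesis together with the convexity of $\underline{\partial} f(x^*)$ forces $0 \in \underline{\partial} f(x^*) + w_0$ --- a contradiction. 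Your argument dispenses with both the contradiction and the separation theorem: you apply Krein--Milman directly to the slice $F = \{(0,w) \in \overline{d} f(x^*)\}$, and the inclusion $F = \cl \co (\extr F) \subseteq -\underline{d} f(x^*)$ is immediate once $\extr F \subseteq -\underline{d} f(x^*)$ is known, because $-\underline{d} f(x^*)$ is weakly closed and convex. The two proofs share one essential ingredient: the identification $\extr F = F \cap \extr \overline{d} f(x^*)$, equivalently that $(0,w) \in \extr \overline{d} f(x^*)$ iff $w \in \extr \overline{\partial} f(x^*)$. The paper asserts this as ``easy to verify,'' yet its nontrivial direction --- extremality within the slice implies extremality in all of $\overline{d} f(x^*)$ --- is precisely what the paper needs in order to apply its hypothesis to the extreme points representing $w_0$; your proof is the one that actually establishes it, via the observation that $F$ is an exposed face (the minimizer set of the weakly continuous functional $(b,w) \mapsto b$, using \eqref{BothApproxZeroAtZero}) together with the standard face lemma. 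On balance your route is shorter, direct, and more self-contained; the paper's separation-based argument offers nothing extra here beyond adherence to a more conventional style.
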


\begin{proof}
The validity of ``only if'' part of the statement is obvious; therefore, let us prove the ``if'' part. It is easy to
verify that $(0, w) \in \extr \overline{d} f(x^*)$ iff $w \in \extr \overline{\partial} f(x^*)$. Hence and from the
Krein-Milman theorem (recall that the set $\overline{\partial} f(x^*)$ is nonempty, weakly compact and convex) it
follows that there exists at least one point $(0, w) \in \extr \overline{d} f(x^*)$.

Arguing by reductio ad absurdum, suppose that \eqref{CodiffOptCond} does not hold true. Then there exists 
$w \in \overline{\partial} f(x^*)$ such that $0 \notin \underline{\partial} f(x^*) + w$. By the separation theorem
there exist $h \in \mathcal{H}$ and $c > 0$ such that $\langle v + w, h \rangle \ge c$ for all
$v \in \underline{\partial} f(x^*)$. Applying the Krein-Milman theorem one obtains that there exists 
$w_0 \in \co \extr \overline{\partial} f(x^*)$ such that $|\langle w - w_0, h \rangle| < c / 2$ (recall that 
$\overline{\partial} f(x^*) = \cl \co \extr \overline{\partial} f(x^*)$, where the closure is taken in the weak
topology due to the fact that the space $\mathcal{H}$ is infinite dimensional in the general case). Therefore
$\langle v + w_0, h \rangle \ge c / 2$ for all $v \in \underline{\partial} f(x^*)$, which implies that
$0 \notin \underline{\partial} f(x^*) + w_0$. On the other hand, from the fact that 
$w_0 \in \co \extr \overline{\partial} f(x^*)$ it obviously follows that 
$0 \in \underline{\partial} f(x^*) + w_0$, which is impossible. Thus, the proof is complete.	 
\end{proof}

With the use of the optimality condition \eqref{CodiffOptCond} we can prove the mean value theorem for codifferentiable
functions. The proof of this result almost literally repeats the proof of the classical mean value theorem for
differentiable functions. To the best of author's knowledge, this proof has never been published before.

\begin{proposition}
Let $U \subset \mathcal{H}$ be an open set, and let $f \colon U \to \mathbb{R}$ be codifferentiable on a set 
$C \subseteq U$. Then for any points $x_1, x_2 \in C$ with $\co\{ x_1, x_2 \} \subseteq C$ there exist
$y \in \co\{ x_1, x_2 \}$, $(0, v) \in \underline{d} f(y)$, and $(0, w) \in \overline{d} f(y)$ such that
$f(x_2) - f(x_1) = \langle v + w, x_2 - x_1 \rangle$.
\end{proposition}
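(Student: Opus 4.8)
The plan is to imitate the classical proof of the mean value theorem through an auxiliary one-variable function and Rolle's theorem, the only genuinely new ingredient being a convexity argument at the very end that upgrades one-sided derivative inequalities into the required exact equality. Write $h = x_2 - x_1$ and define $\psi \colon [0, 1] \to \mathbb{R}$ by $\psi(t) = f(x_1 + t h)$, which is well defined because $\co\{ x_1, x_2 \} \subseteq C$. First I would check that $\psi$ is continuous on $[0, 1]$: for each $t$ the function $f$ is codifferentiable, hence directionally differentiable, at $y = x_1 + t h$, so $\psi(t \pm s) - \psi(t) = f'(y, \pm h) s + o(s) \to 0$ as $s \to +0$, which gives one-sided continuity from both sides. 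Then set $\varphi(t) = \psi(t) - \psi(0) - t \big( \psi(1) - \psi(0) \big)$, which is continuous and satisfies $\varphi(0) = \varphi(1) = 0$.

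Next I would locate an interior extremum. By the extreme value theorem $\varphi$ attains its maximum and its minimum on $[0, 1]$. If $\varphi \equiv 0$ I simply take $t_0 = 1/2$; otherwise either $\max \varphi > 0$ or $\min \varphi < 0$, and since $\varphi$ vanishes at both endpoints the corresponding extremum is attained at an interior point $t_0 \in (0, 1)$. In every case $t_0$ is an interior local extremum of $\varphi$, so the elementary one-sided optimality conditions for a function of one real variable apply: at a local maximum $\varphi'_+(t_0) \le 0 \le \varphi'_-(t_0)$, at a local minimum the reversed inequalities hold, and in the affine case all one-sided derivatives coincide with $\Delta := \psi(1) - \psi(0) = f(x_2) - f(x_1)$.

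Now I would pass through the quasidifferential. Put $y = x_1 + t_0 h$. Since $f$ is codifferentiable at $y$, it is quasidifferentiable there with quasidifferential $[\underline{\partial} f(y), \overline{\partial} f(y)]$ given by \eqref{Quasidiff_Of_Codiff_Func}, and the directional derivative formula yields $\psi'_+(t_0) = f'(y, h) = \max_{v \in \underline{\partial} f(y)} \langle v, h \rangle + \min_{w \in \overline{\partial} f(y)} \langle w, h \rangle$ and $\psi'_-(t_0) = - f'(y, -h) = \min_{v \in \underline{\partial} f(y)} \langle v, h \rangle + \max_{w \in \overline{\partial} f(y)} \langle w, h \rangle$. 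The extremum inequalities from the previous step then sandwich $\Delta$ between $\psi'_+(t_0)$ and $\psi'_-(t_0)$ in one order or the other. Since both of these quantities lie in the interval $[\alpha, \beta]$ with $\alpha = \min_{v} \langle v, h \rangle + \min_{w} \langle w, h \rangle$ and $\beta = \max_{v} \langle v, h \rangle + \max_{w} \langle w, h \rangle$ (the extrema taken over $\underline{\partial} f(y)$ and $\overline{\partial} f(y)$ respectively), either ordering forces $\alpha \le \Delta \le \beta$.

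The final step is the convexity argument, and it is the one place where more than routine care is needed. The sets $\underline{\partial} f(y)$ and $\overline{\partial} f(y)$ are convex, so the range of the continuous linear functional $(v, w) \mapsto \langle v + w, h \rangle$ over the convex set $\underline{\partial} f(y) \times \overline{\partial} f(y)$ is precisely the interval $[\alpha, \beta]$. As $\Delta \in [\alpha, \beta]$, there exist $v \in \underline{\partial} f(y)$ and $w \in \overline{\partial} f(y)$ with $\langle v + w, h \rangle = \Delta = f(x_2) - f(x_1)$, and by \eqref{Quasidiff_Of_Codiff_Func} this is exactly the assertion that $(0, v) \in \underline{d} f(y)$ and $(0, w) \in \overline{d} f(y)$. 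Whereas the classical proof terminates at a single derivative value, here the nonsmoothness yields only one-sided inequalities, and I expect the passage from those inequalities to a genuine equality — supplied by the convexity of the quasidifferential components — to be the main obstacle and the crux of the argument.
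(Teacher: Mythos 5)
Your proof is correct, and it shares the classical skeleton of the paper's proof (parametrize the segment, subtract the affine interpolant, locate an interior extremum of the resulting function vanishing at both endpoints), but the way you exploit the extremum is genuinely different. The paper constructs explicit codifferentials of the one-variable function $g(t) = f(x_1 + t(x_2 - x_1))$ and of the shifted function $r(t) = g(t) - g(0) - t(g(1) - g(0))$ via the codifferential calculus, and then invokes Proposition~\ref{Prp_NessOptCond} at the extremum: at a local minimum the condition $0 \in \underline{d}\, r(\theta) + (0, w)$ for all $(0, w) \in \overline{d}\, r(\theta)$ produces the pair with $v + w = 0$ outright, while at a local maximum the same proposition is applied to $-r$ using $D(-r)(\theta) = [-\overline{d}\, r(\theta), -\underline{d}\, r(\theta)]$. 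You bypass both the one-dimensional codifferential constructions and Proposition~\ref{Prp_NessOptCond}: you use only the Fermat-type one-sided derivative inequalities at the extremum, express $\psi'_{\pm}(t_0)$ through the quasidifferential \eqref{Quasidiff_Of_Codiff_Func}, and close with an intermediate-value argument --- the image of the convex set $\underline{\partial} f(y) \times \overline{\partial} f(y)$ under the linear functional $(v, w) \mapsto \langle v + w, h \rangle$ is an interval containing $\Delta$. Your route is more elementary and self-contained, and it treats maxima and minima symmetrically with no need for the $-g$ trick; the paper's route is shorter given that Proposition~\ref{Prp_NessOptCond} and the calculus rules are already in place, since the separation-type work your convexity step redoes by hand is packaged there. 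One point you should make explicit in the last step: the endpoints $\alpha$ and $\beta$ of that interval must actually be attained (otherwise the case $\Delta = \alpha$ or $\Delta = \beta$ could fail), and this is exactly where the weak compactness of $\underline{\partial} f(y)$ and $\overline{\partial} f(y)$, noted in the paper after \eqref{Quasidiff_Of_Codiff_Func}, is needed; with it, the image is the closed interval $[\alpha, \beta]$ and your conclusion follows.
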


\begin{proof}
Let a function $g \colon (- \varepsilon, 1 + \varepsilon) \to \mathbb{R}$, where $\varepsilon > 0$, be codifferentiable
on $[0, 1]$. Applying the definition of codifferentiable function one can easily verify that $g$ is continuous on 
$[0, 1]$.

Suppose, at first, that $g(0) = g(1) = 0$. Then $g$ attains either a local minimum or a local maximum at a point 
$\theta \in (0, 1)$. If $g$ attains a local minimum at $\theta$, then applying Proposition~\ref{Prp_NessOptCond} one
obtains that for all $(0, w) \in \overline{d} g(\theta)$ there exists $(0, v) \in \underline{d} g(\theta)$ such that 
$v + w = 0$. If $g$ attains a local maximum at $\theta$, then applying Proposition~\ref{Prp_NessOptCond} to 
the function $-g$ one gets that for all $(0, v) \in \underline{d} g(\theta)$ there exists 
$(0, w) \in \overline{d} g(\theta)$ such that $v + w = 0$ (note that the function $\eta(t) = - g(t)$ is obviously
codifferentiable, and $D \eta(\theta) = [- \overline{d} g(\theta), - \underline{d} g(\theta)]$). In either case there
exist $(0, v) \in \underline{d} g(\theta)$ and $(0, w) \in \overline{d} g(\theta)$ such that $v + w = 0$.

Suppose, now, that $g(0)$ and $g(1)$ are arbitrary. For any $t \in (- \varepsilon, 1 + \varepsilon)$ define 
$r(t) = g(t) - g(0) - t (g(1) - g(0))$. Then $r(0) = r(1) = 0$ and, as it is easily seen, the function $r$ is
codifferentiable on $[0, 1]$, and one can define 
$D r(t) = [\underline{d} g(t) + (0, g(0) - g(1)), \overline{d} g(t)]$. Hence by the first part of the proof there exist
$\theta \in (0, 1)$, $(0, v) \in \underline{d} g(\theta)$, and $(0, w) \in \overline{d} g(\theta)$ such that
$g(1) - g(0) = v + w$.

Let us, now, return to the function $f$. Let points $x_1, x_2 \in C$ be such that $\co\{ x_1, x_2 \} \subset C$. Since
$U$ is open and $C \subseteq U$, there exists $\varepsilon > 0$ such that for any 
$t \in (- \varepsilon, 1 + \varepsilon)$ one has $x(t) = x_1 + t (x_2 - x_1) \in U$. Define $g(t) = f(x(t))$ for 
all $t \in (- \varepsilon, 1 + \varepsilon)$. One can easily verify that the function $g$ is codifferentiable 
on $[0, 1]$ (see, e.g., \cite[Thm.~4.5]{Dolgopolik_AbstractConv}), and one can define
\begin{gather*}
  \underline{d} g(t) = \big\{ (a, \langle v, x_2 - x_1 \rangle) \bigm| (a, v) \in \underline{d} f(x(t)) \big\}, \\
  \overline{d} g(t) = \big\{ (b, \langle w, x_2 - x_1 \rangle) \bigm| (b, w) \in \overline{d} f(x(t)) \big\}.
\end{gather*}
Hence by the second part of the proof there exist $\theta \in (0, 1)$, $(0, v) \in \underline{d} f(x(\theta))$, and
$(0, w) \in \overline{d} f(x(\theta))$ such that $\langle v + w, x_2 - x_1 \rangle = g(1) - g(0) = f(x_2) - f(x_1)$,
which completes the proofs.	 
\end{proof}

\begin{remark}
Strictly speaking, in the statement of the proposition above one must indicate that this proposition
holds true for any codifferential mapping $D f$ on the set $C$. However, in order not to overcomplicate the statement of
the proposition, as well as the statements of all results below, we implicitly mean that each result below holds
true for all codifferential mappings satisfying the assumptions of this result.
\end{remark}

\begin{corollary} \label{Corollary_LipschitzContinuity}
Let $U \subset \mathcal{H}$ be an open set, and let a function $f \colon U \to \mathbb{R}$ be codifferentiable on a
convex set $C \subseteq U$. Suppose also that
$$
  R = \sup_{x \in C} \sup\big\{ \| v \| \bigm| (0, v) \in \underline{d} f(x) + \overline{d} f(x) \big\} < + \infty.
$$
Then $f$ is Lipschitz continuous on $C$ with a Lipschitz constant $L = R$. In particular, if $f$ is continuously
codifferentiable on $U$, then $f$ is locally Lipschitz continuous on this set.
\end{corollary}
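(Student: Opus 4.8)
The plan is to derive the Lipschitz estimate directly from the mean value theorem established in the preceding proposition, combined with the Cauchy--Schwarz inequality. First I would fix arbitrary points $x_1, x_2 \in C$. Since $C$ is convex, one has $\co\{ x_1, x_2 \} \subseteq C$, so the hypotheses of the mean value theorem are met. Applying it yields a point $y \in \co\{ x_1, x_2 \} \subseteq C$ together with $(0, v) \in \underline{d} f(y)$ and $(0, w) \in \overline{d} f(y)$ such that $f(x_2) - f(x_1) = \langle v + w, x_2 - x_1 \rangle$.

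Next I would bound the right-hand side. By the Cauchy--Schwarz inequality, $|f(x_2) - f(x_1)| \le \| v + w \| \, \| x_2 - x_1 \|$. Observe that $(0, v + w) = (0, v) + (0, w) \in \underline{d} f(y) + \overline{d} f(y)$ and $y \in C$, so the very definition of $R$ gives $\| v + w \| \le R$. Hence $|f(x_2) - f(x_1)| \le R \, \| x_2 - x_1 \|$ for all $x_1, x_2 \in C$, which is exactly Lipschitz continuity on $C$ with constant $L = R$.

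For the final assertion, suppose $f$ is continuously codifferentiable on $U$ and fix $x_0 \in U$. Each of the sets $\underline{d} f(x_0)$ and $\overline{d} f(x_0)$ is weakly compact, hence norm-bounded, since a weakly compact subset of a Hilbert space is bounded (weak compactness forces weak boundedness, which coincides with norm boundedness by the uniform boundedness principle). By the Hausdorff continuity of the multifunctions $\underline{d} f(\cdot)$ and $\overline{d} f(\cdot)$ at $x_0$, there is $\rho > 0$ such that the closed ball $B(x_0, \rho) \subseteq U$ and, on this ball, both multifunctions stay within Hausdorff distance $1$ of their bounded values at $x_0$. Consequently $\sup\{ \| v \| \mid (a, v) \in \underline{d} f(x) \}$ and $\sup\{ \| w \| \mid (b, w) \in \overline{d} f(x) \}$ are bounded uniformly over $x \in B(x_0, \rho)$, and since every element $(0, v)$ of $\underline{d} f(x) + \overline{d} f(x)$ has $\| v \| \le \| v_1 \| + \| w_1 \|$ with $(a, v_1) \in \underline{d} f(x)$ and $(b, w_1) \in \overline{d} f(x)$, the quantity $R$ computed over the convex set $C = B(x_0, \rho)$ is finite. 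Applying the first part of the corollary to this ball then shows that $f$ is Lipschitz continuous on $B(x_0, \rho)$, i.e. $f$ is locally Lipschitz continuous at $x_0$.

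The core argument is essentially routine, the substance lying entirely in the mean value theorem. The only point requiring a little care is the last step, where one passes from Hausdorff continuity of the codifferential mapping to uniform local boundedness of the hypodifferential and hyperdifferential; I expect this to be the main (mild) obstacle, as it relies on the facts that weakly compact sets in a Hilbert space are norm-bounded and that Hausdorff continuity confines the set-values to a bounded neighbourhood of the (bounded) value at $x_0$.
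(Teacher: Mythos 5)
Your proof is correct and follows exactly the route the paper intends: the corollary is stated as an immediate consequence of the mean value theorem, obtained by combining it with the convexity of $C$ and the Cauchy--Schwarz inequality, and the local Lipschitz claim follows from local boundedness of the (weakly compact, hence norm-bounded) hypodifferential and hyperdifferential under Hausdorff continuity. Both the main estimate and the handling of the ``in particular'' part match the paper's argument, so there is nothing to add.
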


\begin{remark}
{(i)~Let us note that a different proof of the fact that any continuously codifferentiable function is locally Lipschitz
continuous was given in \cite{Kuntz} in the finite dimensional case.
}

\noindent{(ii)~Since a continuously codifferentiable function $f$ is locally Lipschitz continuous, one can consider its
Clarke subdifferential. Let us note that the Clarke subdifferential of the function $f$ is connected with a
quasidifferential of this function via the Demyanov difference (see~\cite[Chapter~III.4]{DemRub_book}). Furthermore, it
is easy to see that if $\mathcal{H}$ is finite dimensional, and $f$ is hypodifferentiable and regular (in the sense of
Clarke; see~\cite{Clarke}) at $x$, then its Clarke subdifferential at $x$ has 
the form $\partial_{Cl} f(x) = \{ v \in \mathcal{H} \mid (0, v) \in \underline{d} f(x) \}$.
}
\end{remark}

Let a function $f$ be defined in a neighbourhood $U$ of a point $x \in \mathcal{H}$ and codifferentiable at this point.
For any $r > 0$ such that $B(x, r) \subset U$, and for all $\Delta x \in B(0, r)$ and $\alpha \in [0, 1]$ define
\begin{equation} \label{ApproximMeasure}
\begin{split}
  \varepsilon_f(\alpha, \Delta x, x, r) = \frac{1}{\alpha}
  \Big( f(x + \alpha \Delta x) - f(x) 
  &- \max_{(a, v) \in \underline{d} f(x)} \big( a + \alpha \langle v, \Delta x \rangle \big) \\
  &- \min_{(b, w) \in \overline{d} f(x)} \big( b + \alpha \langle w, \Delta x \rangle \big) \Big)
\end{split}
\end{equation}
Obviously, the function $\varepsilon_f$ depends on the choice of a codifferential $D f(x)$. However, in order not to
overcomplicate the notation, we do not indicate this dependence explicitly. Note that from the definition of
codifferentiable function it follows that $\varepsilon_f(\alpha, \Delta x, x, r) \to 0$ as $\alpha \to +0$. In
essence, the function $\varepsilon_f$ measures how well the codifferential $D f(x)$ approximates the function $f$ at the
point $x$ in a direction $\Delta x \in B(0, r)$.

Let us derive main calculus rules for continuously codifferentiable functions along with some simple estimates of the
function $\varepsilon_f$. These results will be important for understanding the theorems on convergence of the method of
codifferential descent presented in the following sections. Let us note that although the codifferential calculus is
well-developed (see, e.g., \cite{DemRub_book,Dolgopolik,Dolgopolik_AbstractConv}), the estimates of the function
$\varepsilon_f$ obtained below have never been published before.

For any pairs $[A, B]$ and $[C, D]$ of convex subsets $A$, $B$, $C$ and $D$ of a linear space $X$ define
$[A, B] + [C, D] = [A + C, B + D]$, and for all $\lambda \in \mathbb{R}$ define
$$
  \lambda [A, B] = \begin{cases}
    [\lambda A, \lambda B], & \text{if } \lambda \ge 0, \\
    [\lambda B, \lambda A], & \text{if } \lambda < 0.
  \end{cases}
$$
Note that these rules of addition and multiplication by scalar are the same as in the
Minkowski-R\aa{}dst\"om-H\"ormander space (see, e.g., \cite{PallaschkeUrbnski}). Let $U \subset \mathcal{H}$ be an open
set.

\begin{theorem} \label{Theorem_Codiff_Max_Min}
Let functions $f_i \colon U \to \mathbb{R}$, $i \in I = \{ 1, \ldots, m \}$ be continuously codifferentiable on $U$.
Then the functions $f = \max_{i \in I} f_i$ and $g = \min_{i \in I} f_i$ are continuously codifferentiable on $U$ as
well, and for any $x \in U$ one can define
\begin{gather} \label{MaxFuncCodiff}
  D f(x) = \Big[
  \co\Big\{ \{ (f_i(x) - f(x), 0) \} + \underline{d} f_i(x) - \sum_{j \ne i} \overline{d} f_j(x) \Bigm| i \in I \Big\},
  \sum_{i \in I} \overline{d} f_i(x) \Big], \\
  D g(x) = \Big[ \sum_{i \in I} \underline{d} f_i(x),
  \co\Big\{ \{ (f_i(x) - g(x), 0) \} + \overline{d} f_i(x) - \sum_{j \ne i} \underline{d} f_j(x) \Bigm| i \in I \Big\}
  \Big]. \notag
\end{gather}
Furthermore, one has 
\begin{equation} \label{MaxFunc_ApproxMeasure}
  \max\big\{ \big| \varepsilon_f(\alpha, \Delta x, x, r) \big|, \big| \varepsilon_g(\alpha, \Delta x, x, r) \big| \big\}
  \le \max_{i \in I} \big| \varepsilon_{f_i}(\alpha, \Delta x, x, r) \big|.
\end{equation}
\end{theorem}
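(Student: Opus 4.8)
The plan is to prove the assertion for $f = \max_{i \in I} f_i$ in full and then recover the statement for $g = \min_{i \in I} f_i$ by negation, since $g = -\max_{i \in I}(-f_i)$ and $D(-h) = [-\overline{d} h, -\underline{d} h]$, a rule already used in the excerpt. One checks directly that substituting $D(-f_i) = [-\overline{d} f_i, -\underline{d} f_i]$ into \eqref{MaxFuncCodiff} and negating turns the max-formula into the min-formula, and that $|\varepsilon_{-f_i}| = |\varepsilon_{f_i}|$ and $|\varepsilon_g| = |\varepsilon_{-g}|$, so the min case of \eqref{MaxFunc_ApproxMeasure} follows from the max case. Thus I would concentrate on $f$. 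First I would record the codifferential expansion of each summand: writing
\[
  \Phi_i(\alpha) = \max_{(a, v) \in \underline{d} f_i(x)} \big( a + \alpha \langle v, \Delta x \rangle \big),
  \qquad
  \Psi_i(\alpha) = \min_{(b, w) \in \overline{d} f_i(x)} \big( b + \alpha \langle w, \Delta x \rangle \big),
\]
the definition \eqref{ApproximMeasure} gives $f_i(x + \alpha \Delta x) = f_i(x) + \Phi_i(\alpha) + \Psi_i(\alpha) + \alpha \varepsilon_{f_i}(\alpha, \Delta x, x, r)$ for every $i \in I$.

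The core of the argument is to evaluate the approximation furnished by the candidate $D f(x)$ in \eqref{MaxFuncCodiff}. Two elementary facts drive the computation: a linear functional attains the same extremum over a set and over its convex hull, and the extremum of a linear functional over a Minkowski sum of sets is the sum of the extrema. The second fact applied to $\overline{d} f(x) = \sum_{i} \overline{d} f_i(x)$ gives a hyperdifferential contribution $\sum_i \Psi_i(\alpha)$. The first fact turns the maximum over the convex hull in the hypodifferential into a maximum over $i$, and the Minkowski-sum rule then evaluates the $i$-th generator as $f_i(x) - f(x) + \Phi_i(\alpha) - \sum_{j \ne i} \Psi_j(\alpha)$, where the term $-\sum_{j \ne i}\overline{d} f_j(x)$ produces $-\sum_{j\ne i}\Psi_j(\alpha)$. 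Adding back the hyperdifferential's $\sum_i \Psi_i(\alpha)$ inside the maximum restores $\Psi_i(\alpha)$ in each term, so that the full approximation of $f(x + \alpha \Delta x)$ collapses to $\max_{i \in I}\big( f_i(x) + \Phi_i(\alpha) + \Psi_i(\alpha) \big)$.

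With both the true value $f(x+\alpha\Delta x) = \max_i\big(f_i(x)+\Phi_i(\alpha)+\Psi_i(\alpha)+\alpha\varepsilon_{f_i}\big)$ and the approximation in this form, estimate \eqref{MaxFunc_ApproxMeasure} is immediate from the $1$-Lipschitz property $|\max_i a_i - \max_i b_i| \le \max_i|a_i - b_i|$, applied with $a_i = f_i(x)+\Phi_i(\alpha)+\Psi_i(\alpha)+\alpha\varepsilon_{f_i}$ and $b_i = f_i(x)+\Phi_i(\alpha)+\Psi_i(\alpha)$; this yields $|\alpha\varepsilon_f| \le \alpha\max_i|\varepsilon_{f_i}|$, and dividing by $\alpha > 0$ gives the bound. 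Since each $\varepsilon_{f_i} \to 0$ as $\alpha \to +0$, the same identity simultaneously shows that $f$ is codifferentiable at $x$ with the stated codifferential. To finish I would verify the three structural requirements. The generating sets are weakly compact and convex, hence their convex hull is the image of the compact product of the unit simplex in $\mathbb{R}^m$ with those sets under the weakly continuous map $(\lambda, \cdot) \mapsto \sum_i \lambda_i(\cdot)$, so it is weakly compact and no closure is needed. The normalization \eqref{BothApproxZeroAtZero} holds: setting $\alpha = 0$ in the computation and using that it holds for each $f_i$ gives $\max_{(a,v)\in\underline{d}f(x)}a = \max_i(f_i(x) - f(x)) = 0$ and $\min_{(b,w)\in\overline{d}f(x)}b = 0$. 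Finally, Hausdorff continuity of $\underline{d} f(\cdot)$ and $\overline{d} f(\cdot)$ follows because Minkowski sums, scalings, translations by the continuous functions $f_i(\cdot)$ and $f(\cdot)$, and convex hulls all preserve Hausdorff continuity.

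I expect the main obstacle to be purely bookkeeping: correctly tracking signs and the splitting $\sum_{i} = (\cdot)_i + \sum_{j \ne i}(\cdot)$ so that the hyperdifferential $\sum_i \overline{d} f_i(x)$ exactly cancels the $-\sum_{j\ne i}\overline{d} f_j(x)$ terms and reinstates $\Psi_i(\alpha)$ inside the maximum; once this identity is secured the error estimate and codifferentiability are routine. A secondary point demanding care is the weak compactness of the convex hull in the infinite-dimensional setting, where one cannot invoke a finite-dimensional Carathéodory argument and must instead argue via continuity of the hull map on a weakly compact product.
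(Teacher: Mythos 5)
Your proposal is correct and takes essentially the same route as the paper's proof: the same expansion $f_i(x+\alpha\Delta x)=f_i(x)+\Phi_i(\alpha\Delta x)+\Psi_i(\alpha\Delta x)+\alpha\varepsilon_{f_i}$, the same identity collapsing the candidate approximation to $\max_{i\in I}\big(f_i(x)+\Phi_i(\alpha\Delta x)+\Psi_i(\alpha\Delta x)\big)$ (the paper writes this as adding and subtracting $\sum_i \Psi_i$ inside the maximum), and the same $1$-Lipschitz-of-max estimate yielding \eqref{MaxFunc_ApproxMeasure}. The only departure is cosmetic: where the paper handles $g$ by noting the argument is symmetric, you recover the min formula by applying the max case to $-f_i$ and negating, which is valid since $D(-h)=[-\overline{d}h,-\underline{d}h]$ and $\varepsilon_{-h}=-\varepsilon_h$ reproduce exactly the stated $Dg(x)$ and error bound.
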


\begin{proof}
We only consider the function $f$, since the proof for the function $g$ is almost the same. Fix an arbitrary $x \in U$,
and let $r > 0$ be such that $B(x, r) \subset U$. Denote
$$
  \Phi_i(y) = \max_{(a, v) \in \underline{d} f_i(x)} \big( a + \langle v, y \rangle \big), \quad
  \Psi_i(y) = \min_{(b, w) \in \overline{d} f_i(x)} \big( b + \langle w, y \rangle \big),
$$
and define
$$
  \Phi(y) = \max_{(a, v) \in \underline{d} f(x)} \big( a + \langle v, y \rangle \big), \quad
  \Psi(y) = \min_{(b, w) \in \overline{d} f(x)} \big( b + \langle w, y \rangle \big),
$$
where the pair $D f(x) = [\underline{d} f(x), \overline{d} f(x)]$ is defined in \eqref{MaxFuncCodiff}. Taking
into account the fact that the functions $f_i$ are locally Lipschitz continuous on $U$
by Corollary~\ref{Corollary_LipschitzContinuity} one can easily verify that the mappings $\underline{d} f(\cdot)$ and
$\overline{d} f(\cdot)$ are Hausdorff continuous on $U$. Moreover, the sets $\underline{d} f(x)$ and $\overline{d} f(x)$
are obviously weakly compact and convex.

Fix arbitrary $\Delta x \in B(0, r)$ and $\alpha \in [0, 1]$. By definition one has
\begin{multline} \label{MaxFunc_CodiffDecomposition}
  f(x + \alpha \Delta x) - f(x) = \max_{i \in I} \big( f_i(x + \alpha \Delta x) - f(x) \big)
  = \max_{i \in I} \big( f_i(x) - f(x) \\
  + \Phi_i(\alpha \Delta x) + \Psi_i(\alpha \Delta x) + \alpha \varepsilon_{f_i}(\alpha, \Delta x, x, r) \big)
  = \max_{i \in I} \big( f_i(x) - f(x) \\
  + \Phi_i(\alpha \Delta x) - \sum_{j \ne i} \Psi_i(\alpha \Delta x)
  + \alpha \varepsilon_{f_i}(\alpha, \Delta x, x, r) \big) + \sum_{i = 1}^m \Psi_i(\alpha \Delta x).
\end{multline}
From the obvious equality 
$$
  \Phi(y) = \max_{i \in I} 
  \big( f_i(x) - f(x) + \Phi_i(y) - \sum_{j \ne i} \Psi_i(y) \big)
$$
it follows that
\begin{multline*}
  \Big| \max_{i \in I} \Big( f_i(x) - f(x) + \Phi_i(\alpha \Delta x) - \sum_{j \ne i} \Psi_i(\alpha \Delta x)
  + \alpha \varepsilon_{f_i}(\alpha, \Delta x, x, r) \Big) \\
  - \Phi(\alpha \Delta x) \Big| \le
  \alpha \max_{i \in I} \big| \varepsilon_{f_i}(\alpha, \Delta x, x, r) \big|.
\end{multline*}
Hence applying \eqref{MaxFunc_CodiffDecomposition} and the fact that $\Psi(y) = \sum_{i \in I} \Psi_i(y)$ one obtains
that $| \varepsilon_f(\alpha, \Delta x, x, r) | \le \max_{i \in I} | \varepsilon_{f_i}(\alpha, \Delta x, x, r) |$,
where $\varepsilon_f(\alpha, \Delta x, x, r)$ is defined as in \eqref{ApproximMeasure}. Therefore the function $f$ is
codifferentiable at the point $x$, $D f(x)$ is a codifferential of $f$ at this point, and inequality
\eqref{MaxFunc_ApproxMeasure} holds true.	 
\end{proof}

\begin{theorem} \label{Theorem_Codiff_Composition}
Let functions $f_i \colon U \to \mathbb{R}$, $i \in I = \{ 1, \ldots, m \}$ be continuously codifferentiable on $U$, and
a real-valued function $G$ be defined and continuously differentiable on an open set $V$ containing the set
$\{ f(x) \in \mathbb{R}^m \mid x \in U \}$, where $f(x) = (f_1(x), \ldots, f_m(x))$. Then the function  
$g(\cdot) = G(f_1(\cdot), \ldots, f_m(\cdot))$ is continuously codifferentiable on $U$, and for any $x \in U$ one can
define
\begin{equation} \label{CompositionCodiff}
  D g(x) = \sum_{i = 1}^m \frac{\partial G}{\partial y_i}(f(x)) D f_i(x).
\end{equation}
Moreover, for any $r > 0$ such that $B(x, r) \subset U$ and $\co f(B(x, r)) \subset V$ one has
\begin{multline} \label{Composition_ApproxMeasure}
  \big| \varepsilon_g (\alpha, \Delta x, x, r) \big| \le
  \sum_{i = 1}^m \left| \frac{\partial G}{\partial y_i}(f(x)) \right| 
  \big| \varepsilon_{f_i} (\alpha, \Delta x, x, r) \big| \\
  + \frac{1}{\alpha} 
  \sup_{t \in [0, 1]} \Big| \big\langle \nabla G\big( y(t) \big) - \nabla G(f(x)),
   f(x + \alpha \Delta x) - f(x) \big\rangle \Big|,
\end{multline}
where $\langle \cdot, \cdot \rangle$ is the inner product in $\mathbb{R}^m$, 
and $y(t) = t f(x) + (1 - t) f(x + \alpha \Delta x)$.
\end{theorem}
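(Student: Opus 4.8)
The plan is to reduce everything to two facts: that the Minkowski--R\aa{}dstr\"om--H\"ormander (MRH) operations act linearly on the d.c.\ approximation attached to a codifferential, and that $G$ admits a first order expansion along segments. Fix $x \in U$, abbreviate $c_i = \partial G / \partial y_i(f(x))$, and for each $i$ set $\Phi_i(y) = \max_{(a, v) \in \underline{d} f_i(x)}(a + \langle v, y\rangle)$ and $\Psi_i(y) = \min_{(b, w) \in \overline{d} f_i(x)}(b + \langle w, y\rangle)$. For a pair $[A, B]$ of weakly compact convex subsets of $\mathbb{R} \times \mathcal{H}$ write
$$
  h_{[A, B]}(y) = \max_{(a, v) \in A}\big(a + \langle v, y\rangle\big)
  + \min_{(b, w) \in B}\big(b + \langle w, y\rangle\big),
$$
so that $h_{D f_i(x)}(y) = \Phi_i(y) + \Psi_i(y)$ and $\varepsilon_{f_i}(\alpha, \Delta x, x, r) = \alpha^{-1}\big(f_i(x + \alpha\Delta x) - f_i(x) - h_{D f_i(x)}(\alpha \Delta x)\big)$. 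The set-valued expression in \eqref{CompositionCodiff} is built from the $D f_i(x)$ by Minkowski sums and scalar multiples, so $\underline{d} g(x)$ and $\overline{d} g(x)$ are weakly compact and convex, these operations preserving weak compactness (a Minkowski sum is the image of a product of weakly compact sets under the weakly continuous addition map) and convexity.

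First I would establish the identity $h_{D g(x)}(y) = \sum_{i = 1}^m c_i\big(\Phi_i(y) + \Psi_i(y)\big)$. Additivity is immediate, since the support function of a Minkowski sum is the sum of the support functions, both for the $\max$ part and for the $\min$ part. For a single scalar multiple the case $\lambda \ge 0$ gives $\lambda[A, B] = [\lambda A, \lambda B]$ and $h_{[\lambda A, \lambda B]} = \lambda h_{[A, B]}$ directly; the case $\lambda < 0$ is the delicate one, where $\lambda[A, B] = [\lambda B, \lambda A]$ and one must check that the negative factor turns the $\max$ over $\lambda B$ into $\lambda$ times the $\min$ over $B$, and the $\min$ over $\lambda A$ into $\lambda$ times the $\max$ over $A$, so that again $h_{[\lambda B, \lambda A]} = \lambda h_{[A, B]}$. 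Evaluating the resulting identity at $y = 0$ and using \eqref{BothApproxZeroAtZero} for each $f_i$ yields $h_{D g(x)}(0) = 0$, that is, \eqref{CodiffApproxZeroAtZero} for $D g(x)$.

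Next I would expand $g$. The mean value theorem applied to $G$ along the segment joining $f(x)$ and $f(x + \alpha \Delta x)$ produces a $t_0 \in [0, 1]$ with
\begin{multline*}
  g(x + \alpha \Delta x) - g(x)
  = \big\langle \nabla G(f(x)), f(x + \alpha \Delta x) - f(x)\big\rangle \\
  + \big\langle \nabla G(y(t_0)) - \nabla G(f(x)), f(x + \alpha \Delta x) - f(x)\big\rangle.
\end{multline*}
In the first term I substitute $f_i(x + \alpha\Delta x) - f_i(x) = \Phi_i(\alpha\Delta x) + \Psi_i(\alpha\Delta x) + \alpha\varepsilon_{f_i}(\alpha, \Delta x, x, r)$ and invoke the identity of the previous paragraph to recognise $\sum_i c_i\big(\Phi_i(\alpha\Delta x) + \Psi_i(\alpha\Delta x)\big) = h_{D g(x)}(\alpha\Delta x)$. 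Subtracting $h_{D g(x)}(\alpha\Delta x)$, dividing by $\alpha$, and comparing with \eqref{ApproximMeasure} gives the exact identity
$$
  \varepsilon_g(\alpha, \Delta x, x, r) = \sum_{i = 1}^m c_i \varepsilon_{f_i}(\alpha, \Delta x, x, r)
  + \frac{1}{\alpha}\big\langle \nabla G(y(t_0)) - \nabla G(f(x)), f(x + \alpha\Delta x) - f(x)\big\rangle,
$$
whence the triangle inequality and passing to the supremum over $t \in [0, 1]$ in the remainder produce \eqref{Composition_ApproxMeasure}.

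Finally I would read off codifferentiability and continuity. Each $\varepsilon_{f_i} \to 0$ as $\alpha \to +0$; for the remainder, Corollary~\ref{Corollary_LipschitzContinuity} gives $\|f(x + \alpha\Delta x) - f(x)\| = O(\alpha)$, while $y(t) \to f(x)$ uniformly in $t$ forces $\sup_t \|\nabla G(y(t)) - \nabla G(f(x))\| \to 0$ by continuity of $\nabla G$, so the remainder tends to $0$ and $g$ is codifferentiable at $x$ with codifferential \eqref{CompositionCodiff}. Hausdorff continuity of $\underline{d} g(\cdot)$ and $\overline{d} g(\cdot)$ follows from that of each $D f_i(\cdot)$ together with the continuity of $x \mapsto c_i$; the only point needing care is a coefficient $c_i$ changing sign, but there $\lambda[A, B]$ collapses to $[\{0\}, \{0\}]$ as $\lambda \to 0$ from either side, so the swap in the MRH scalar multiplication introduces no discontinuity. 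I expect this negative-scalar case to be the main obstacle: getting the $\max$/$\min$ bookkeeping right in $h_{[\lambda B, \lambda A]} = \lambda h_{[A, B]}$, and then verifying that Hausdorff continuity survives the sign change.
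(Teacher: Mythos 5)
Your proof is correct and follows essentially the same route as the paper's: a mean value theorem expansion of $G$ along the segment joining $f(x)$ and $f(x+\alpha\Delta x)$, substitution of the d.c.\ approximations $\Phi_i + \Psi_i + \alpha\varepsilon_{f_i}$ for the increments $f_i(x+\alpha\Delta x)-f_i(x)$, and then reading off \eqref{Composition_ApproxMeasure} and codifferentiability from the local Lipschitz continuity of the $f_i$ and the continuity of $\nabla G$. The only difference is that you spell out what the paper dismisses with ``one can easily verify'' --- the support-function identity under the Minkowski--R\aa{}dstr\"om--H\"ormander operations (including the max/min swap for negative coefficients) and the Hausdorff continuity across a sign change of $\partial G/\partial y_i(f(x))$ --- and your handling of both points is sound.
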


\begin{proof}
Fix arbitrary $x \in U$ and $r > 0$ such that $B(x, r) \subset U$ and $\co f(B(x, r)) \subset V$. Note that such $r > 0$
exists due to the fact that $f_i$ are locally Lipschitz continuous on $U$ by
Corollary~\ref{Corollary_LipschitzContinuity}. Choose $\Delta x \in B(0, r)$ and $\alpha \in [0, 1]$, and denote 
$y(t) = t f(x) + (1 - t) f(x + \alpha \Delta x)$. Applying the mean value theorem one obtains that 
there exists $t \in (0, 1)$ such that
\begin{multline*}
  g(x + \alpha \Delta x) - g(x) = \langle \nabla G(f(x)), f(x + \alpha \Delta x) - f(x) \rangle \\
  + \big\langle \nabla G\big( y(t) \big) - \nabla G(f(x)), f(x + \alpha \Delta x) - f(x) \big\rangle \\
  = \sum_{i = 1}^m \frac{\partial G}{\partial y_i}(f(x)) 
  \Big( \Phi_i(\alpha \Delta x) + \Psi_i(\alpha \Delta x) + \alpha \varepsilon_{f_i}(\alpha, \Delta x, x, r) \Big) \\
  + \big\langle \nabla G\big( y(t) \big) - \nabla G(f(x)), f(x + \alpha \Delta x) - f(x) \big\rangle,
\end{multline*}
where $\Phi_i$ and $\Psi_i$ are defined as in the proof of Theorem~\ref{Theorem_Codiff_Max_Min}. Hence taking into
account the fact that the functions $f_i$ are locally Lipschitz continuous one can easily verify that the function $g$
is codifferentiable at $x$, the pair $D g(x)$ defined in \eqref{CompositionCodiff} is a codifferential of $g$ at $x$,
and \eqref{Composition_ApproxMeasure} holds true. It remains to note that codifferential mapping
\eqref{CompositionCodiff} is obviously continuous.	 
\end{proof}

\begin{corollary} \label{Corollary_Codiff_Lin_Combin}
Let functions $f_i \colon U \to \mathbb{R}$, $i \in I = \{ 1, \ldots, m \}$ be continuously codifferentiable on $U$.
Then for any $\lambda_i \in \mathbb{R}$, $i \in I$ the function $f = \sum_{i \in I} \lambda_i f_i$ is continuously
codifferentiable on $U$, and
$$
  D f(x) = \sum_{i \in I} \lambda_i D f_i(x), \quad
  \big| \varepsilon_f(\alpha, \Delta x, x, r) \big| \le 
  \sum_{i \in I} |\lambda_i| \cdot \big| \varepsilon_{f_i}(\alpha, \Delta x, x, r) \big|.
$$
\end{corollary}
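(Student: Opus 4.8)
The plan is to derive this corollary as a direct specialization of Theorem~\ref{Theorem_Codiff_Composition} to an affine outer function. I would set $G \colon \mathbb{R}^m \to \mathbb{R}$, $G(y_1, \ldots, y_m) = \sum_{i \in I} \lambda_i y_i$, which is continuously differentiable on the whole of $\mathbb{R}^m$; hence one may take $V = \mathbb{R}^m$ in Theorem~\ref{Theorem_Codiff_Composition}, and the requirements of that theorem, namely that the range of the vector map $(f_1, \ldots, f_m)$ lie in $V$ and that $\co (f_1, \ldots, f_m)(B(x, r)) \subset V$, are then satisfied automatically for every $r > 0$ with $B(x, r) \subset U$. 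Since $f = \sum_{i \in I} \lambda_i f_i$ is precisely the composition $G(f_1(\cdot), \ldots, f_m(\cdot))$ (i.e. it plays the role of the function $g$ in Theorem~\ref{Theorem_Codiff_Composition}), the theorem at once gives that $f$ is continuously codifferentiable on $U$.

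For the codifferential formula I would simply note that the partial derivatives $\partial G / \partial y_i \equiv \lambda_i$ are constant and substitute them into \eqref{CompositionCodiff}, obtaining $D f(x) = \sum_{i \in I} \lambda_i D f_i(x)$, where the scalar multiples and the sum are understood in the sense of the addition and scalar-multiplication rules for pairs of convex sets introduced just before Theorem~\ref{Theorem_Codiff_Max_Min}. This is exactly the asserted expression.

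The one step that merits attention is the error bound, and the key observation is that the nonlinear remainder in \eqref{Composition_ApproxMeasure} vanishes identically. Indeed, since $\nabla G$ is the constant vector $(\lambda_1, \ldots, \lambda_m)$, both gradients appearing in the remainder term of \eqref{Composition_ApproxMeasure} coincide, so their difference is zero for every $t \in [0, 1]$ and the supremum term disappears. What then remains of \eqref{Composition_ApproxMeasure} is $\sum_{i \in I} |\lambda_i| \, |\varepsilon_{f_i}(\alpha, \Delta x, x, r)|$, which is exactly the claimed estimate. I do not anticipate any genuine obstacle here: because $G$ is affine its second-order behaviour is trivial, and the whole content of the corollary is the recognition that the remainder term of the composition theorem collapses to zero in the affine case.
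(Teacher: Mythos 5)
Your proposal is correct and is exactly the derivation the paper intends: Corollary~\ref{Corollary_Codiff_Lin_Combin} is stated as an immediate consequence of Theorem~\ref{Theorem_Codiff_Composition}, obtained by taking $G(y) = \sum_{i \in I} \lambda_i y_i$ with $V = \mathbb{R}^m$, reading off \eqref{CompositionCodiff} with $\partial G/\partial y_i \equiv \lambda_i$, and observing that the constant gradient makes the remainder term in \eqref{Composition_ApproxMeasure} vanish identically.
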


\begin{corollary} \label{Corollary_Codiff_Multiplication}
Let functions $f_1, f_2 \colon U \to \mathbb{R}$ be continuously codifferentiable on $U$. Then the function 
$f = f_1 \cdot f_2$ is continuously codifferentiable on $U$, for any $x \in U$ one can define
$D f(x) = f_2(x) D f_1(x) + f_1(x) D f_2(x)$, and for any sufficiently small $r > 0$ one has
\begin{multline*}
  \big| \varepsilon_f(\alpha, \Delta x, x, r) \big| \le 
  |f_2(x)| \cdot \big| \varepsilon_{f_1} (\alpha, \Delta x, x, r) \big|
  + |f_1(x)| \cdot \big| \varepsilon_{f_2}(\alpha, \Delta x, x, r) \big| \\
  + 2 \alpha L_1 L_2 \| \Delta x \|^2,
\end{multline*}
where $L_i > 0$ is a Lipschitz constant of $f_i$ on $B(x, r)$.
\end{corollary}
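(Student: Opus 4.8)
The plan is to realize the product $f = f_1 \cdot f_2$ as the composition $g(\cdot) = G(f_1(\cdot), f_2(\cdot))$ with $G \colon \mathbb{R}^2 \to \mathbb{R}$, $G(y_1, y_2) = y_1 y_2$ (so that $g$ is the corollary's $f$), and then simply to invoke Theorem~\ref{Theorem_Codiff_Composition} with $m = 2$. Since $G$ is continuously differentiable on all of $V = \mathbb{R}^2$, the hypotheses of that theorem are met and the condition $\co f(B(x, r)) \subset V$ on the vector-valued map $(f_1, f_2)$ holds automatically. The only requirement on $r$ is thus that $B(x, r) \subset U$ and that $f_1, f_2$ be Lipschitz continuous on $B(x, r)$; the latter is guaranteed for all sufficiently small $r$ by Corollary~\ref{Corollary_LipschitzContinuity}, which both accounts for the phrase ``for any sufficiently small $r > 0$'' and supplies the constants $L_1, L_2$.

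Next I would read off the two main conclusions. Computing $\partial G / \partial y_1 = y_2$ and $\partial G / \partial y_2 = y_1$, formula~\eqref{CompositionCodiff} immediately gives $D f(x) = f_2(x) D f_1(x) + f_1(x) D f_2(x)$, the claimed codifferential, while continuous codifferentiability of $f$ on $U$ is part of the conclusion of Theorem~\ref{Theorem_Codiff_Composition}. The first line of estimate~\eqref{Composition_ApproxMeasure} becomes $|f_2(x)| \cdot |\varepsilon_{f_1}(\alpha, \Delta x, x, r)| + |f_1(x)| \cdot |\varepsilon_{f_2}(\alpha, \Delta x, x, r)|$, which is exactly the first two terms of the desired bound.

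The only genuine computation — and the one step I expect to require a little care — is bounding the remainder term on the second line of~\eqref{Composition_ApproxMeasure}. Here $\nabla G(y) = (y_2, y_1)$, and with $y(t) = t f(x) + (1 - t) f(x + \alpha \Delta x)$ one has $y(t) - f(x) = (1 - t)\big( f(x + \alpha \Delta x) - f(x) \big)$, so that $\nabla G(y(t)) - \nabla G(f(x)) = (1 - t)\big( f_2(x + \alpha \Delta x) - f_2(x), \, f_1(x + \alpha \Delta x) - f_1(x) \big)$. Pairing this against $f(x + \alpha \Delta x) - f(x)$ in $\mathbb{R}^2$ collapses the two summands into $2(1 - t)\big( f_1(x + \alpha \Delta x) - f_1(x) \big)\big( f_2(x + \alpha \Delta x) - f_2(x) \big)$, where the coordinate swap performed by $\nabla G$ is the only point at which I would be careful with indices. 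I would then apply the Lipschitz estimates $|f_i(x + \alpha \Delta x) - f_i(x)| \le \alpha L_i \|\Delta x\|$ on $B(x, r)$ together with $(1 - t) \le 1$, bounding the supremum over $t \in [0, 1]$ by $2 \alpha^2 L_1 L_2 \|\Delta x\|^2$; dividing by $\alpha$ yields the final term $2 \alpha L_1 L_2 \|\Delta x\|^2$ and completes the proof.
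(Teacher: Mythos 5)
Your proposal is correct and follows exactly the route the paper intends: the corollary is stated as a consequence of Theorem~\ref{Theorem_Codiff_Composition} applied to $G(y_1, y_2) = y_1 y_2$ on $V = \mathbb{R}^2$, with the codifferential formula and the first two terms of the bound read off from \eqref{CompositionCodiff} and \eqref{Composition_ApproxMeasure}, and the remainder term estimated via the Lipschitz constants supplied by Corollary~\ref{Corollary_LipschitzContinuity}. Your computation of the remainder --- reducing it to $2(1-t)\bigl(f_1(x+\alpha\Delta x)-f_1(x)\bigr)\bigl(f_2(x+\alpha\Delta x)-f_2(x)\bigr)$ and bounding it by $2\alpha^2 L_1 L_2 \|\Delta x\|^2$ before dividing by $\alpha$ --- is exactly right, including the coordinate swap in $\nabla G$.
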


\begin{corollary} \label{Corollary_Codiff_Division}
Let a function $f \colon U \to \mathbb{R}$ be continuously codifferentiable on $U$, and $f(x) \ne 0$ for all $x \in U$.
Then the function $g = 1 / f$ is continuously codifferentiable on $U$, for any $x \in U$ one can define
$D g(x) = - f(x)^{-2} D f(x)$, and for any sufficiently small $r > 0$ one has
$$ 
  \big| \varepsilon_g(\alpha, \Delta x, x, r) \big| \le 
  \frac{1}{f(x)^2} \big| \varepsilon_f(\alpha, \Delta x, x, r) \big|
  + \frac{2 \alpha L^2 \| \Delta x \|^2}{\min\{ |f(x) - r L|^3, |f(x) + r L|^3 \}},
$$
where $L > 0$ is a Lipschitz constant of $f$ on $B(x, r)$.
\end{corollary}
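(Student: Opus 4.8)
The plan is to derive this corollary directly from the composition theorem (Theorem~\ref{Theorem_Codiff_Composition}) applied with $m = 1$, $f_1 = f$, and the outer function $G(y) = 1/y$. First I would fix $x \in U$ and recall that, since $f$ is continuously codifferentiable, it is locally Lipschitz on $U$ by Corollary~\ref{Corollary_LipschitzContinuity}; let $L$ be a Lipschitz constant of $f$ on $B(x, r)$. For $y \in B(x, r)$ one has $|f(y) - f(x)| \le rL$, so $f(B(x, r)) \subseteq [f(x) - rL, f(x) + rL]$, and the same containment holds for $\co f(B(x, r))$. Choosing $r$ small enough that $rL < |f(x)|$ guarantees $0 \notin [f(x) - rL, f(x) + rL]$; this is the meaning of ``sufficiently small $r$'' in the statement. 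On an open interval $V$ containing this segment but avoiding the origin, $G(y) = 1/y$ is continuously differentiable with $G'(y) = -1/y^2$, so the hypotheses of Theorem~\ref{Theorem_Codiff_Composition} are satisfied.

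The codifferential formula is then immediate: \eqref{CompositionCodiff} gives $D g(x) = G'(f(x)) D f(x) = - f(x)^{-2} D f(x)$, and continuous codifferentiability of $g$ on $U$ follows since $x$ was arbitrary. For the error estimate I would specialise \eqref{Composition_ApproxMeasure} to $m = 1$. Its first summand becomes $|G'(f(x))| \, |\varepsilon_f(\alpha, \Delta x, x, r)| = f(x)^{-2} |\varepsilon_f(\alpha, \Delta x, x, r)|$, which is exactly the first term of the claimed bound.

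The heart of the argument is bounding the residual term $\frac{1}{\alpha} \sup_{t \in [0,1]} |(G'(y(t)) - G'(f(x)))(f(x + \alpha \Delta x) - f(x))|$, where $y(t) = t f(x) + (1-t) f(x + \alpha \Delta x)$. Here I would apply the one-dimensional mean value theorem to $G'$, using $G''(s) = 2/s^3$, to write $|G'(y(t)) - G'(f(x))| \le \sup_s |G''(s)| \, |y(t) - f(x)|$, the supremum being over the segment joining $y(t)$ and $f(x)$. Both $y(t)$ and $f(x)$ lie in $[f(x) - rL, f(x) + rL]$, hence so does this entire segment; since the interval avoids $0$, $|s|$ is bounded below on it by $\min\{|f(x) - rL|, |f(x) + rL|\}$, giving $|G''(s)| \le 2 / \min\{|f(x) - rL|^3, |f(x) + rL|^3\}$. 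Combining this with the identity $y(t) - f(x) = (1-t)(f(x + \alpha \Delta x) - f(x))$, so that $|y(t) - f(x)| \le L \alpha \|\Delta x\|$, and with $|f(x + \alpha \Delta x) - f(x)| \le L \alpha \|\Delta x\|$, yields the residual bound $2 \alpha L^2 \|\Delta x\|^2 / \min\{|f(x) - rL|^3, |f(x) + rL|^3\}$ after cancelling the factor $1/\alpha$ against one of the two factors of $\alpha$; this is precisely the second term.

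The only genuinely delicate point is the interplay between the choice of $r$ and the denominator in the estimate: one must ensure simultaneously that $\co f(B(x, r))$ stays away from $0$ (so that $G$ is $C^1$ on a suitable $V$ and the composition theorem applies) and that the lower bound $\min\{|f(x) - rL|, |f(x) + rL|\}$ on $|s|$ is valid uniformly over all $t \in [0,1]$ and over the nested segments appearing in the mean value estimate. Everything else is a routine specialisation of Theorem~\ref{Theorem_Codiff_Composition} combined with the elementary identity $|G''(s)| = 2/|s|^3$.
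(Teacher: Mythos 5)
Your proposal is correct and follows exactly the route the paper intends: the corollary is a direct specialization of Theorem~\ref{Theorem_Codiff_Composition} with $G(y) = 1/y$ (taking $V = \mathbb{R} \setminus \{0\}$, which contains $f(U)$ since $f$ never vanishes), with the residual term in \eqref{Composition_ApproxMeasure} bounded via the mean value theorem applied to $G'$, the Lipschitz estimate $|f(x+\alpha\Delta x) - f(x)| \le \alpha L \|\Delta x\|$, and the lower bound $|s| \ge \min\{|f(x) - rL|, |f(x) + rL|\}$ on the segment $\co f(B(x,r))$, which requires exactly the smallness condition $rL < |f(x)|$ you identify. The derivation matches the stated constant $2\alpha L^2 \|\Delta x\|^2 / \min\{|f(x) - rL|^3, |f(x) + rL|^3\}$ term for term, so nothing further is needed.
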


\begin{remark}
Note that equalities \eqref{BothApproxZeroAtZero} hold true automatically, if one computes a codifferential of a
function $f$ with the use of the calculus rules presented above and
Examples~\ref{Example_Differentiable}--\ref{Example_ConvexFunction}.
\end{remark}

In the end of this section, let us introduce a class of \textit{uniformly codifferentiable} function that will
play a very important role in the convergence analysis of the method of codifferential descent. 

\begin{definition}
Let a function $f \colon U \to \mathbb{R}$ be (continuously) codifferentiable on $U$, and let a set $C \subset U$ be
such that there exists $r > 0$ for which $B(x, r) \subseteq U$ for all $x \in C$. One says that $f$ is
\textit{(continuously) uniformly codifferentiable} on a set $C \subseteq U$ if there exists a (continuous)
codifferential mapping $D f(\cdot) = [\underline{d} f(\cdot), \overline{d} f(\cdot)]$ with
$\underline{d} f(\cdot), \overline{d} f(\cdot) \colon U \rightrightarrows \mathbb{R} \times \mathcal{H}$ such that 
$\varepsilon_f(\alpha, \Delta x, x, r) \to 0$ as $\alpha \to + 0$ uniformly for all $\Delta x \in B(0, r)$ and 
$x \in C$. In this case one says that the (continuous) codifferential mapping $D f$ uniformly approximates the function
$f$ on the set $C$.
\end{definition}

One can easily check that if a function $f \colon U \to \mathbb{R}$ is G\^{a}teaux differentiable on $U$, and its 
G\^{a}teaux derivative is uniformly continuous on the set $\cup_{x \in C} B(0, \tau)$ for some $\tau > 0$, then $f$ is
continuously uniformly codifferentiable on the set $C$. In particular, if $\mathcal{H}$ is finite dimensional, then a
continuously differentiable function $f$ is continuously uniformly codifferentiable on any bounded set.

Observe that Theorems~\ref{Theorem_Codiff_Max_Min} and \ref{Theorem_Codiff_Composition} are very useful for
verifying whether a given codifferentiable function is uniformly codifferentiable on a given set. In particular,
Theorem~\ref{Theorem_Codiff_Max_Min} and Corollary~\ref{Corollary_Codiff_Lin_Combin} imply that the set of all
functions that are continuously uniformly codifferentiable on a given set $C$ is closed under addition, multiplication
by scalar, as well as the pointwise maximum and minimum of finite families of functions (i.e. this set is a vector
lattice). In turn, Corollaries~\ref{Corollary_Codiff_Multiplication} and \ref{Corollary_Codiff_Division} imply that 
the set of all locally (i.e. in a neighbourhood of every point) continuously uniformly codifferentiable functions is
closed under all standard algebraic operations. Furthermore, from Theorem~\ref{Theorem_Codiff_Composition} it follows
that the function $g(x) = G(f_1(x), \ldots, f_m(x))$ is continuously uniformly codifferentiable on a set $C \subset U$,
provided the functions $f_i$ are continuously uniformly codifferentiable and Lipschitz continuous on $U$, the function
$G$ is continuously differentiable on an open set $V$ containing the set $\cup_{x \in C} \co f(B(x, r))$ for some 
$r > 0$, and the gradient $\nabla G(y)$ is uniformly continuous on $V$. For example, the function
$$
  f(x_1, x_2) = \sin\big( \min\{ x_1, x_2 \} \big) \cdot e^{\max\{ x_1, x_2 \}} + \big| \sinh(x_1 + |x_2|) \big|
$$
is continuously uniformly codifferentiable on any bounded subset of $\mathbb{R}^2$. Moreover, with the use of 
Theorems~\ref{Theorem_Codiff_Max_Min} and \ref{Theorem_Codiff_Composition} one can easily compute a continuous
codifferential mapping $D f(\cdot)$ of the function $f$ on $\mathbb{R}^2$ uniformly approximating this function on any
bounded set.

\section{The method of codifferential descent}
\label{Section_MCD}

In this section we present a general scheme of the method of codifferential descent, and prove the global
convergence of this method.

\subsection{A description of the method}

Let $U \subseteq \mathcal{H}$ be an open set, and let a function $f \colon U \to \mathbb{R}$ be codifferentiable on
$U$. Hereinafter, we suppose that a codifferential mapping $D f(\cdot)$ of the function $f$ on the set $U$ is fixed.
Recall that if $x^* \in U$ is a point of local minimum of the function $f$, then
\begin{equation} \label{InfStatPoint_Def}
  0 \in \underline{d} f(x^*) + (0, w) \quad \forall (0, w) \in \extr \overline{d} f(x^*)
\end{equation}
by Corollary~\ref{Corollary_OptimalityCond}. Any point $x^* \in U$ satisfying \eqref{InfStatPoint_Def} is called
\textit{an inf-stationary} point of the function $f$. Note that by Proposition~\ref{Prp_NessOptCond} the set of
inf-stationary points of $f$ is independent of the choice of a codifferential. Let us describe a method for finding
inf-stationary points of the function $f$ on the set $U$ called \textit{the method of codifferential descent} (MCD).

\begin{remark}
Note that our aim is to minimize the function $f$ on an open set $U$, but not necessarily on the entire space
$\mathcal{H}$. To this end, below we suppose that a sequence generated by the MCD does not leave the set $U$. This
assumption might seem unnatural at first glance; however, it allows one to apply the results on the convergence of 
the MCD obtained in the article to ``barrier-like'' functions $f$, i.e. to those nonsmooth functions $f$ which are equal
to $+ \infty$ outset an open set $U$. In particular, the convergence results below can be applied to exact barrier
functions for nonsmooth optimization problems (see~\cite{ExactBarrierFunc}).
\end{remark}

For any $\nu, \mu \in [0, + \infty]$ let set-valued mappings
$\underline{d}_{\nu} f(\cdot), \overline{d}_{\mu} f(\cdot) \colon U \rightrightarrows \mathbb{R} \times \mathcal{H}$ be
such that for any $x \in U$ one has
\begin{equation} \label{ReducedCodifferentialDef}
\begin{split}
  \big\{ (a, v) \in \extr \underline{d}f(x) \bigm| a \ge - \nu \big\} &\subseteq \underline{d}_{\nu} f(x)
  \subseteq \underline{d} f(x), \\
  \big\{ (b, w) \in \extr \overline{d}f(x) \bigm| b \le \mu \big\} &\subseteq \overline{d}_{\mu} f(x)
  \subseteq \overline{d} f(x). 
\end{split}
\end{equation}
The pair $[\underline{d}_{\nu} f(x), \overline{d}_{\mu} f(x)]$ is called \textit{a truncated codifferential} of $f$ at
$x$, and it can be viewed as a kind of approximation of both quasidifferential $\mathscr{D} f(x)$ and codifferential $D
f(x)$ of $f$ at $x$, depending on the values of the parameters $\nu$ and $\mu$. Namely, one has
$$
  \{ 0 \} \times \extr \underline{\partial} f(x) \subseteq \underline{d}_{0} f(x), \quad
  \extr \underline{d} f(x) \subseteq \underline{d}_{\infty} f(x) \subseteq \underline{d} f(x),
$$
and similar relations hold true for $\overline{d}_{\mu} f(x)$. Let us note that in applications 
the set $\underline{d} f(x)$ is typically a convex hull of a finite set of points $(a_i, v_i)$. In this case one usually
defines $\underline{d}_{\nu} f(x)$ as the set of all those $(a_i, v_i)$ for which $a_i \ge - \nu$. The set
$\overline{d}_{\mu} f(x)$ is defined in a similar way.

\begin{remark}
In theory, the best possible choice of the sets $\underline{d}_{\nu} f(x)$ and $\overline{d}_{\mu} f(x)$ is 
\begin{equation*}
\begin{split}
  \underline{d}_{\nu} f(x) &= \big\{ (a, v) \in \extr \underline{d}f(x) \bigm| a \ge - \nu \big\}, \quad \\
  \overline{d}_{\mu} f(x) &= \big\{ (b, w) \in \extr \overline{d}f(x) \bigm| b \le \mu \big\}.
\end{split}
\end{equation*}
However, in order to define the truncated codifferential this way in practice, one has to find all extreme points of the
sets $\underline{d} f(x)$ and $\overline{d} f(x)$, which is a very computationally expensive procedure. That is why in
some application it might be more efficient to use larger sets $\underline{d}_{\nu} f(x)$ and $\overline{d}_{\mu} f(x)$,
but avoid the search of extreme points. The main goal of this article is to analyse the convergence of the MCD in the
general case. That is why we do not specify the way the truncated codifferential is defined, and only impose assumptions
\eqref{ReducedCodifferentialDef} that are somewhat necessary to ensure convergence.
\end{remark}

Let the space $\mathbb{R} \times \mathcal{H}$ be equipped with the inner product 
$\langle (a, v), (b, w) \rangle = a b + \langle v, w \rangle$ and the corresponding norm. The scheme of the method of
codifferential descent is as follows.

\begin{enumerate}
\item{Choose sequences $\{ \nu_n \}, \{ \mu_n \} \subset [0, + \infty]$, the upper bound $\alpha_* \in (0, + \infty]$
on the step size, and an initial point $x_0 \in U$.
}
\item{$n$th iteration ($n \ge 0$).
  \begin{enumerate}
    \item{Compute $\underline{d}_{\nu_n} f(x_n)$ and $\overline{d}_{\mu_n} f(x_n)$.
    }
    \item{For any $z = (b, w) \in \overline{d}_{\mu_n} f(x_n)$ compute
    $$
      \{ (a_n(z), v_n(z)) \} = \argmin\Big\{ \| (a, v) \| \Bigm| 
	 (a, v) \in \cl \co \underline{d}_{\nu_n} f(x_n) + z \Big\}.
    $$
    }
    \item{For any $z \in \overline{d}_{\mu_n} f(x_n)$ compute
    $$
      \alpha_n(z) \in \argmin\Big\{ f(x_n - \alpha v_n(z)) \Bigm| 
      \alpha \in [0, \alpha_*] \colon x_n - \alpha v_n(z) \in U \Big\}.
    $$
    }
    \item{Compute
    $$
      z_n \in \argmin\Big\{ f(x_n - \alpha_n(z) v_n(z)) \Bigm| z \in \overline{d}_{\mu_n} f(x_n) \Big\},
    $$
    and define $x_{n + 1} = x_n - \alpha_n(z_n) v_n(z_n)$.
    }
  \end{enumerate}
}
\end{enumerate}

\begin{remark}
Here the closure is taken in the norm (or, equivalently, weak) topology, and in the case $\alpha_* = + \infty$ we define
$[0, \alpha_*] = [0, + \infty)$. Note also that the set $\cl \co \underline{d}_{\nu_n} f(x_n) \subseteq \underline{d}
f(x_n)$ is weakly compact due to the weak compactness of the hypodifferential $\underline{d} f(x_n)$, which implies that
the pairs $(a_n(z), v_n(z))$ are correctly defined.
\end{remark}

Observe that at each iteration of the MCD one must perform line search in \textit{several} directions. As we will show
below, at least one of those directions is a descent direction (i.e. $f'(x_n, -v_n(z)) < 0$), provided $x_n$ is not an
inf-stationary point of the function $f$. Therefore, for any $n \in \mathbb{N}$ either $x_n$ is an inf-stationary point
of $f$ or $f(x_{n + 1}) < f(x_n)$. On the other hand, some of the search directions $- v_n(z)$ might not be descent
directions of the function $f$, i.e. $f$ may first increase and then decrease in these directions. This interesting
feature of the MCD allows it to ``jump over'' some points of local minimum, provided the parameters $\nu_n$ and $\mu_n$
are sufficiently large (for a particular example of this phenomenon see~\cite{DemBagRub}, Sect.~4). Note that the fact
that some of the search directions $v_n(z)$ are not descent direction forces one to define step sizes via 
the minimization of the function $f$ along the directions $v_n(z)$ instead of utilizing some more widespread step size
rules, such as the Armijo and the Goldstein rules.

The parameters $\nu_n, \mu_n \ge 0$  are introduced into the MCD in order to ensure convergence. If one looks at the
form of the necessary optimality condition \eqref{InfStatPoint_Def}, then it might seem natural to utilize the MCD with
$\nu_n \equiv \mu_n \equiv 0$. However, the MCD with $\nu_n \equiv \mu_n \equiv 0$ might converge to a non-stationary
point of the function $f$ (cf.~\cite{DemyanovMalozemov}, Section~III.5, and \cite{HiriartUrrutyLemarechal},
Section~VIII.2.2). Let us also note that the choice of parameters $\nu_n$ and $\mu_n$ is a tradeoff between the
complexity of every iteration and the overall performance of the method. In many application, a decrease of the
parameters $\nu_n$ and $\mu_n$ reduces the cost of an iteration, while an increase of these parameters might allow
one to find a better local solution.

Finally, from this point onwards we suppose that the step sizes $\alpha_n(z)$ and the vector $z_n$ are well-defined in
every iteration of the MCD. The vector $z_n$ is well-defined, provided the sets $\overline{d}_{\mu_n} f(x_n)$ are
finite, which is the case in almost all applications. In turn, the step sizes $\alpha_n(z)$ are well-defined if $f$ is
l.s.c., $U = \mathcal{H}$ (or $f(x) = + \infty$ outside $U$) and $\alpha_* < + \infty$. In the case 
$\alpha_* = + \infty$, one must make an additional assumption on the function $f$ such as the boundedness of the set 
$\{ x \in U \mid f(x) < f(x_0) \}$.

\begin{remark} \label{Remark_MCD_Unrealisable}
Let us note that the original version of the MCD \cite{DemRub_book} corresponds to the case when $\alpha_* = + \infty$,
$\mu_n \equiv \mu > 0$, $\underline{d}_{\nu_n} f(x_n) \equiv \underline{d} f(x_n)$ and
$\overline{d}_{\mu_n} f(x_n) \equiv \{ (b, w) \in \overline{d} f(x_n) \mid b \le \mu \}$ for some $\mu > 0$. However,
a direct practical implementation of the original method is impossible, since the set 
$\{ (b, w) \in \overline{d} f(x) \mid b \le \mu \}$ almost always has the cardinality of the continuum (unless $f$ is
hypodifferentiable). Note also that a direct implementation of the method of truncated codifferential \cite{DemBagRub}
is impossible for a similar reason. The version of the MCD proposed in this paper is, in fact, a mathematical
description of the way the original method and the method of truncated codifferential are
implemented in practice. 
\end{remark}

\subsection{Auxiliary Results}

Before we proceed to the convergence analysis of the MCD, let us first prove a simple result about descent directions in
the MCD, and two useful auxiliary lemmas that will be utilized throughout the rest of the article.

\begin{lemma} \label{Lemma_DescentDirections}
Let a sequence $\{ x_n \} \subset U$ be generated by the MCD. Then for any $n \in \mathbb{N}$ and 
$z = (0, w) \in \overline{d}_{\mu_n} f(x_n)$ such that $0 \notin \cl\co\underline{d}_{\nu_n} f(x_n) + z$ one has
$f'(x_n, -v_n(z)) \le - \| (a_n(z), v_n(z)) \|^2$, which implies that $v_n(z) \ne 0$, provided $a_n(z) \ne 0$. Moreover,
if $x_n$ is not an inf-stationary point of the function $f$, then $f(x_{n + 1}) < f(x_n)$.
\end{lemma}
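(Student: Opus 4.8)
The plan is to read off the descent inequality directly from the variational characterization of the projection $p := (a_n(z), v_n(z))$, which by construction is the nearest point to the origin in the weakly compact convex set $K := \cl\co\underline{d}_{\nu_n} f(x_n) + z$. First I would record the standard projection inequality: since $p$ is the metric projection of $0$ onto $K$, for every $q \in K$ one has $\langle p, q \rangle \ge \| p \|^2$, where $\langle \cdot, \cdot \rangle$ denotes the inner product on $\mathbb{R} \times \mathcal{H}$. The hypothesis $0 \notin K$ guarantees $p \ne 0$, hence $\| p \| > 0$.

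The second, and conceptually central, step is to feed into this inequality a single carefully chosen element of $K$. Write $z = (0, w)$, so that $w \in \overline{\partial} f(x_n)$. Using Bauer's maximum principle on the weakly compact convex set $\underline{\partial} f(x_n)$, I would select $v^* \in \extr \underline{\partial} f(x_n)$ at which the weakly continuous linear functional $v' \mapsto \langle v', -v_n(z) \rangle$ attains its maximum. Because $v^*$ is an extreme point of $\underline{\partial} f(x_n)$, the pair $(0, v^*)$ is an extreme point of $\underline{d} f(x_n)$ whose first coordinate $0$ satisfies $0 \ge -\nu_n$; hence $(0, v^*) \in \underline{d}_{\nu_n} f(x_n)$ by \eqref{ReducedCodifferentialDef}, so that $q := (0, v^* + w) = (0, v^*) + z \in K$. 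Evaluating the projection inequality at this $q$, and noting that the first coordinate of $p$ contributes nothing since $q$ has first coordinate $0$, yields $\langle v_n(z), v^* + w \rangle \ge \| p \|^2$. Combining this with the quasidifferential formula $f'(x_n, -v_n(z)) = \max_{v' \in \underline{\partial} f(x_n)} \langle v', -v_n(z) \rangle + \min_{w' \in \overline{\partial} f(x_n)} \langle w', -v_n(z) \rangle$ and the trivial bound $\min_{w'} \langle w', -v_n(z) \rangle \le \langle w, -v_n(z) \rangle$ gives exactly $f'(x_n, -v_n(z)) \le -\| p \|^2$. The implication $v_n(z) \ne 0$ when $a_n(z) \ne 0$ then follows by contradiction, since $v_n(z) = 0$ would force $0 = f'(x_n, 0) \le -a_n(z)^2 < 0$.

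For the ``moreover'' part, I would unwind the non-inf-stationarity of $x_n$ through \eqref{InfStatPoint_Def}: there exists $(0, w_0) \in \extr \overline{d} f(x_n)$ with $0 \notin \underline{d} f(x_n) + (0, w_0)$. Since $0 \le \mu_n$, this pair lies in $\overline{d}_{\mu_n} f(x_n)$, so $z := (0, w_0)$ is admissible in the MCD; and since $\cl\co \underline{d}_{\nu_n} f(x_n) \subseteq \underline{d} f(x_n)$, we still have $0 \notin \cl\co \underline{d}_{\nu_n} f(x_n) + z$. The first part then yields $f'(x_n, -v_n(z)) \le -\| p \|^2 < 0$, so $-v_n(z)$ is a genuine descent direction and $f(x_n - \alpha v_n(z)) < f(x_n)$ for all sufficiently small $\alpha > 0$ (such $\alpha$ being feasible because $U$ is open). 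As $\alpha_n(z)$ minimizes $f$ along this ray and $z_n$ minimizes over all admissible $z$, I conclude $f(x_{n+1}) = f(x_n - \alpha_n(z_n) v_n(z_n)) \le f(x_n - \alpha_n(z) v_n(z)) < f(x_n)$.

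I expect the main obstacle to be the bookkeeping in the second step: one must verify that the maximizer of $\langle \cdot, -v_n(z) \rangle$ can be taken to be an extreme point of $\underline{\partial} f(x_n)$, so that the corresponding element survives the truncation, and that this extreme point lifts to an extreme point of $\underline{d} f(x_n)$ lying in $\underline{d}_{\nu_n} f(x_n)$. Everything else — the projection inequality, the directional-derivative line-search argument, and the reduction of non-inf-stationarity to an admissible choice of $z$ — is routine once the connection between the truncated set and the ``top face'' $\{0\} \times \underline{\partial} f(x_n)$ is made precise.
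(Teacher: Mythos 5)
Your proof is correct and follows essentially the same route as the paper's: the variational (projection) inequality for $(a_n(z), v_n(z))$ on $\cl\co\underline{d}_{\nu_n} f(x_n) + z$, combined with the fact that zero-level extreme points of the hypodifferential survive the truncation (which rests on \eqref{BothApproxZeroAtZero}), fed into the quasidifferential formula for $f'(x_n, -v_n(z))$, with the ``moreover'' part handled identically. The only cosmetic difference is that you extract a single maximizing extreme point $v^*$ via Bauer's maximum principle, whereas the paper uses the inclusion $\{0\} \times \underline{\partial} f(x_n) \subseteq \cl\co\underline{d}_{\nu_n} f(x_n)$ and maximizes over the whole set; both hinge on exactly the same lifting fact.
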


\begin{proof}
Fix arbitrary $n \in \mathbb{N}$ and $z = (0, w_0) \in \overline{d}_{\mu_n} f(x_n)$ such that 
$0 \notin \cl \co \underline{d}_{\nu_n} f(x_n) + z$. Applying the necessary condition for a minimum of a convex
function on a convex set one obtains that
\begin{equation} \label{SeparationTheorem}
  a a_n(z) + \langle v, v_n(z) \rangle \ge \big\| (a_n(z), v_n(z)) \big\|^2 
  \quad \forall (a, v) \in \cl \co \underline{d}_{\nu_n} f(x_n) + z.
\end{equation}
Note that by definition one has 
$\{ 0 \} \times \underline{\partial} f(x_n) \subseteq \cl\co \underline{d}_{\nu_n} f(x_n)$
and $w_0 \in \overline{\partial} f(x_n)$. Therefore
\begin{multline*}
  f'(x_n, -v_n(z)) = \max_{v \in \underline{\partial} f(x_n)} \langle v, - v_n(z) \rangle 
  + \min_{w \in \overline{\partial} f(x_n)} \langle w, - v_n(z) \rangle \\
  \le \max_{v \in \underline{\partial} f(x_n) + w_0} \langle v, - v_n(z) \rangle \le - \big\| (a_n(z), v_n(z)) \big\|^2.
\end{multline*}
Finally, note that if $x_n$ is not an inf-stationary point of $f$, then by definition there 
exists $z = (0, w) \in \overline{d}_{\mu_n} f(x_n)$ such that 
$0 \notin \cl \co \underline{d}_{\nu_n} f(x_n) + z$, which implies that $f(x_n - \alpha_n(z) v_n(z)) < f(x_n)$,
and hence $f(x_{n + 1}) < f(x_n)$.	 
\end{proof}

\begin{lemma}\label{Lemma_LimitOfExtremePoints}
Let $X$ be a finite dimensional normed space, and let a sequence $\{ A_n \}$ of convex compact subsets of $X$ converge
to a convex compact set $A \subset X$ in the Hausdorff metric. Then for any subsequence $\{ A_{n_k} \}$ one has
$\extr A \subseteq \limsup_{k \to \infty} \extr A_{n_k}$, where $\limsup$ is the outer limit.
\end{lemma}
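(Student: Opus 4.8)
The plan is to first pass to the given subsequence and relabel it as $\{B_k\}$, so that $B_k \to A$ in the Hausdorff metric and the claim reduces to $\extr A \subseteq \limsup_{k} \extr B_k$. Fix $e \in \extr A$. Since the outer limit contains every subsequential limit of points chosen from the sets $\extr B_k$, it suffices to exhibit extreme points of the $B_k$, along some subsequence, that converge to $e$. The one genuinely non-trivial issue is that $e$ need not be an \emph{exposed} point of $A$, so there is no single linear functional whose maximum over $A$ is attained uniquely at $e$; I will circumvent this by an $\varepsilon$-separation argument combined with a diagonal selection. (One could instead invoke Straszewicz's theorem on the density of exposed points in the extreme points, but the route below is self-contained.)

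The workhorse is the following observation. Given a continuous linear functional $\ell$ on $X$, let $F_k = \argmax_{(x \in B_k)} \ell(x)$ be the nonempty, compact, convex face of $B_k$ on which $\ell$ is maximized, and pick any $e_k \in \extr F_k$; because $F_k$ is exactly the set where $\ell$ attains its maximum over $B_k$, every extreme point of $F_k$ is an extreme point of $B_k$, so $e_k \in \extr B_k$. Using $B_k \to A$ I will show $\max_{B_k} \ell \to \max_A \ell$: for the lower bound, fix $x^* \in \argmax_A \ell$ and choose $y_k \in B_k$ with $y_k \to x^*$ (possible since $\dist(x^*, B_k) \to 0$), giving $\max_{B_k} \ell \ge \ell(y_k) \to \ell(x^*)$; for the upper bound, any limit point of maximizers lies in $A$ because $\dist(e_k, A) \to 0$ and $A$ is closed. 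Consequently every limit point of $\{e_k\}$ lies in $\argmax_A \ell$.

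To handle a general $e \in \extr A$, fix $\varepsilon > 0$ and consider the compact convex set $K_\varepsilon = \co\{ x \in A \mid \|x - e\| \ge \varepsilon \}$ (compact since it is the convex hull of a compact subset of the finite-dimensional space $X$). Because $e$ is extreme, it cannot be written as a finite convex combination of points of $A$ other than $e$ itself, whence $e \notin K_\varepsilon$; strict separation of a point from a disjoint compact convex set then yields a functional $\ell_\varepsilon$ with $\ell_\varepsilon(e) > \sup_{K_\varepsilon} \ell_\varepsilon$, which forces $\argmax_A \ell_\varepsilon \subseteq \{ x \mid \|x - e\| < \varepsilon \}$. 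Applying the workhorse to $\ell = \ell_\varepsilon$ produces, for each $\varepsilon$, extreme points $e_k^\varepsilon \in \extr B_k$ all of whose limit points lie within $\varepsilon$ of $e$; hence $\|e_k^\varepsilon - e\| < \varepsilon$ for all sufficiently large $k$.

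Finally I diagonalize: taking $\varepsilon = 1/m$, I choose an increasing sequence $k_m$ with $\|e_{k_m}^{1/m} - e\| < 1/m$, obtaining extreme points of $B_{k_m}$ that converge to $e$, so $e \in \limsup_k \extr B_k$. The only delicate point in the whole argument is the passage from exposed to merely extreme points, which the separation-plus-diagonal device resolves; the degenerate case in which $A \subseteq B(e, \varepsilon)$ (for instance $A$ a singleton), where $K_\varepsilon$ is empty, is trivial and is treated directly, since then every extreme point of $B_k$ already clusters within $\varepsilon$ of $e$.
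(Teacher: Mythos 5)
Your proof is correct, but it takes a genuinely different route from the paper's. The paper proves the stronger intermediate identity $A = \co\bigl(\limsup_{k \to \infty} \extr A_{n_k}\bigr)$: for $x \in A$ it picks $x_k \in A_{n_k}$ with $x_k \to x$, writes each $x_k$ as a Carath\'{e}odory-bounded convex combination of at most $\dim X + 1$ extreme points of $A_{n_k}$, passes to convergent subsequences of the extreme points and the coefficients, and then invokes the partial converse to the Krein--Milman theorem (Milman's theorem: if $K = \cl \co B$ is compact convex, then $\extr K \subseteq \cl B$) together with the closedness of the outer limit. Your argument instead works one extreme point $e \in \extr A$ at a time and never appeals to Milman's theorem: you replace it by the $\varepsilon$-separation of $e$ from $\co\{x \in A \mid \|x - e\| \ge \varepsilon\}$ (which is legitimate precisely because $e$ is extreme and this hull is compact in finite dimension), push the resulting functional to the approximating sets via the Hausdorff convergence of maxima, harvest extreme points of the maximizing faces of $B_k$, and diagonalize over $\varepsilon = 1/m$. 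In effect you inline the approximate-exposure mechanism that underlies Milman's and Straszewicz's theorems, so your proof is more self-contained and more constructive (it exhibits, for each $e$, explicit extreme points of the $B_k$ converging to $e$), at the cost of being longer; the paper's proof is shorter modulo the quoted converse to Krein--Milman and yields the stronger conclusion that the convex hull of the outer limit recovers all of $A$. Both proofs use finite dimensionality in an essential way --- the paper through Carath\'{e}odory's bound and compactness of bounded sequences, you through compactness of convex hulls of compact sets, strict separation, and the same sequential compactness. Two small points worth making explicit in a polished write-up: the sequences of extreme points you extract are bounded because $\{B_k\}$ lies in a bounded set (this is what licenses "limit points exist"), and the face $F_k$ has an extreme point because it is a nonempty compact convex subset of a finite dimensional space (Minkowski's theorem); neither is a gap, both are one-line remarks.
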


\begin{proof}
Let us verify that $A = \co ( \limsup_{k \to \infty} \extr A_{n_k} )$. Then applying a partial converse to the
Krein-Milman theorem (if $K$ is a compact convex set, and $K = \cl \co B$, then $\extr K \subseteq \cl B$; see, e.g.,
\cite[Proposition~10.1.3]{Edwards}), and the fact that the outer limit set is always closed we arrive at the required
result.

From the fact that $A_n \to A$ in the Hausdorff metric it obviously follows that 
$\co (\limsup_{k \to \infty} \extr A_{n_k}) \subseteq A$. Let us prove the converse inclusion. 
Fix an arbitrary $x \in A$. Clearly, for any $k \in \mathbb{N}$ there exists $x_k \in A_{n_k}$ such that $x_k \to x$ as
$k \to \infty$. Applying the Krein-Milman and the Carath\'{e}odory theorems one obtains that for any $k \in \mathbb{N}$
there exist $y^i_k \in \extr A_{n_k}$, and $\alpha^i_k \ge 0$, $1 \le i \le m + 1$ (here $m$ is the dimension of $X$)
such that
\begin{equation} \label{KreinMilman_plus_Caratheodory}
  x_k = \sum_{i = 1}^{m + 1} \alpha^i_k y^i_k, 
  \qquad \sum_{i = 1}^{m + 1} \alpha^i_k = 1.
\end{equation}
Note that the sequence $\{ A_n \}$ lies within a bounded set by virtue of the fact that $A$ is compact, and $A_n \to A$
in the Hausdorff metric. Hence the sequences $\{ y^i_k \}$, $i \in \{ 1, \ldots m + 1 \}$ are bounded. Replacing them as
well as the sequences $\{ \alpha_k^i \}$ with convergent subsequences, and passing to the limit in
\eqref{KreinMilman_plus_Caratheodory} one obtains that there exists $y^i \in \limsup_{k \to \infty} \extr A_{n_k}$ and
$\alpha^i \ge 0$ such that
$$
  x = \sum_{i = 1}^{m + 1} \alpha^i y^i, \qquad \sum_{i = 1}^{m + 1} \alpha^i = 1.
$$
Thus, $x \in \co (\limsup_{k \to \infty} \extr A_{n_k})$, and the proof is complete.	 
\end{proof}

\begin{lemma} \label{Lemma_GlobalDivergence}
Let sequences $\{ x_n \} \subset U$ and $\{ \nu_n \}, \{ \mu_n \} \subset [0, + \infty]$ be such that
\begin{enumerate}
\item{there exists $r > 0$ for which $B(x_n, r) \subset U$ for all $n \in \mathbb{N}$;}

\item{the codifferential mapping $D f$ uniformly approximates $f$ on the set $\{ x_n \}_{n \in \mathbb{N}}$;}

\item{the sequences $\{ \underline{d} f(x_n) \}$ and $\{ \overline{d} f(x_n) \}$ lie within a bounded set;}

\item{$\liminf_{n \to \infty} \nu_n = \nu^* > 0$.} 
\end{enumerate}
Suppose also that $\{ h_n \} \subset \mathcal{H}$ is a bounded sequence satisfying the inequalities
\begin{gather} \label{DescentCond1}
  \sup_{(a, v) \in \underline{d}_{\nu_n} f(x_n) + z_n} ( a + \langle v, h_n \rangle ) \le - \theta
  \quad \forall n \in \mathbb{N}, \\
  f(x_{n + 1}) \le \inf_{\alpha \in [0, \alpha_0]} f(x_n + \alpha h_n) + \varepsilon_n 
  \quad \forall n \in \mathbb{N}	\label{DescentCond2}
\end{gather}
for some $z_n = (b_n, w_n) \in \overline{d}_{\mu_n} f(x_n)$, $\theta > 0$, $\alpha_0 > 0$ and $\varepsilon_n$ such that
$\varepsilon_n \to 0$ and $b_n \to 0$ as $n \to \infty$. Then $f(x_n) \to - \infty$ as $n \to \infty$.
\end{lemma}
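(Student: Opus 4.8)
The plan is to exhibit a single, $n$-independent step size $\bar\alpha > 0$ along which $f$ decreases by at least a fixed positive amount for all large $n$; together with \eqref{DescentCond2} and $\varepsilon_n \to 0$ this forces $f(x_n) \to -\infty$. Throughout I would set $M = \sup_n \| h_n \| < +\infty$ and let $R < +\infty$ be a common bound on $\| v \|$ and $\| w \|$ over all $(a, v) \in \underline{d} f(x_n)$ and $(b, w) \in \overline{d} f(x_n)$ (assumption~3). By the normalization \eqref{BothApproxZeroAtZero} one has $a \le 0$ for every $(a, v) \in \underline{d} f(x_n)$ and $b \ge 0$ for every $(b, w) \in \overline{d} f(x_n)$; in particular $b_n \ge 0$. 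Writing $\Phi_n(\bar\alpha) = \max_{(a, v) \in \underline{d} f(x_n)} ( a + \bar\alpha \langle v, h_n \rangle )$ and $\Psi_n(\bar\alpha) = \min_{(b, w) \in \overline{d} f(x_n)} ( b + \bar\alpha \langle w, h_n \rangle )$ for the hypo- and hyperdifferential model values along $h_n$, and rescaling the direction into $B(0, r)$ by $\eta_n = (r / M) h_n \in B(0, r)$, $\beta = \bar\alpha M / r$, the definition of $\varepsilon_f$ gives $f(x_n + \bar\alpha h_n) - f(x_n) = \Phi_n(\bar\alpha) + \Psi_n(\bar\alpha) + \beta\, \varepsilon_f(\beta, \eta_n, x_n, r)$.

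The heart of the argument is to bound $\Phi_n(\bar\alpha) + \Psi_n(\bar\alpha)$ by a fixed negative number for large $n$. Since the functional $(a, v) \mapsto a + \bar\alpha \langle v, h_n \rangle$ is weakly continuous and affine and $\underline{d} f(x_n)$ is weakly compact and convex, its maximum is attained at an extreme point, so I would split $\extr \underline{d} f(x_n)$ into $E^+_n = \{ (a, v) \in \extr \underline{d} f(x_n) \mid a \ge - \nu_n \}$ and $E^-_n = \{ (a, v) \in \extr \underline{d} f(x_n) \mid a < - \nu_n \}$. On $E^-_n$ the crude estimate $a + \bar\alpha \langle v, h_n \rangle < - \nu_n + \bar\alpha R M$ holds, and since $\liminf_n \nu_n = \nu^* > 0$, for large $n$ and small $\bar\alpha$ this lies below $- \nu^* / 4$. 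On $E^+_n$ I would invoke the inclusion $E^+_n \subseteq \underline{d}_{\nu_n} f(x_n)$ from \eqref{ReducedCodifferentialDef} together with \eqref{DescentCond1}, which yields $a + \langle v, h_n \rangle \le - \theta - b_n - \langle w_n, h_n \rangle$; then the normalization $a \le 0$ gives, via $a + \bar\alpha \langle v, h_n \rangle = \bar\alpha ( a + \langle v, h_n \rangle ) + ( 1 - \bar\alpha ) a \le \bar\alpha ( a + \langle v, h_n \rangle )$ for $\bar\alpha \in (0, 1]$, the bound $a + \bar\alpha \langle v, h_n \rangle \le \bar\alpha ( - \theta - b_n - \langle w_n, h_n \rangle )$. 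Adding the elementary estimate $\Psi_n(\bar\alpha) \le b_n + \bar\alpha \langle w_n, h_n \rangle$, valid because $z_n \in \overline{d} f(x_n)$, cancels the $\langle w_n, h_n \rangle$ terms and leaves $- \bar\alpha \theta + ( 1 - \bar\alpha ) b_n$, which is at most $- \bar\alpha \theta / 2$ for large $n$ since $b_n \to 0$. Hence $\Phi_n(\bar\alpha) + \Psi_n(\bar\alpha) \le - \min\{ \bar\alpha \theta / 2, \nu^* / 4 \}$ for all large $n$.

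It then remains to absorb the approximation error. Because $\eta_n \in B(0, r)$ and $\{ x_n \}$ is the set on which $D f$ uniformly approximates $f$ (assumption~2), $\varepsilon_f(\beta, \eta_n, x_n, r) \to 0$ as $\beta \to + 0$ uniformly in $n$; as $\beta = \bar\alpha M / r \to 0$ when $\bar\alpha \to 0$, I would first fix $\bar\alpha$ small enough — also requiring $\bar\alpha \le \min\{ 1, \alpha_0, r / M \}$ and $2 \bar\alpha R M \le \nu^* / 8$ — so that $\beta \, | \varepsilon_f(\beta, \eta_n, x_n, r) | \le \tfrac12 \min\{ \bar\alpha \theta / 2, \nu^* / 4 \} =: c$ for all large $n$. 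This gives $f(x_n + \bar\alpha h_n) \le f(x_n) - c$, and since $\bar\alpha \in [0, \alpha_0]$ and $x_n + \bar\alpha h_n \in B(x_n, r) \subset U$, inequality \eqref{DescentCond2} yields $f(x_{n + 1}) \le f(x_n) - c + \varepsilon_n \le f(x_n) - c / 2$ for all large $n$, whence $f(x_n) \to - \infty$.

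The one point requiring genuine care — and the step I expect to be the main obstacle — is the interplay between the fixed threshold $\nu^*$ and the extreme points discarded in $E^-_n$. The descent condition \eqref{DescentCond1} controls only the truncated set $\underline{d}_{\nu_n} f(x_n)$, whereas $\Phi_n$ is a maximum over the full hypodifferential $\underline{d} f(x_n)$; it is precisely the assumption $\nu^* > 0$, combined with the normalization $a \le 0$ that converts the unit-step condition \eqref{DescentCond1} into a bound valid for the short step $\bar\alpha$, that makes the contribution of the part of $\underline{d} f(x_n)$ about which \eqref{DescentCond1} says nothing uniformly negative, so that it cannot destroy the descent captured on $\underline{d}_{\nu_n} f(x_n)$.
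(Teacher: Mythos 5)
Your proof is correct and takes essentially the same approach as the paper's: both restrict to a small, $n$-independent step size so that the extreme points of $\underline{d} f(x_n)$ with $a < -\nu_n$ (about which \eqref{DescentCond1} says nothing) contribute values below a fixed negative threshold, both convert the unit-step condition \eqref{DescentCond1} into a short-step bound by affine interpolation in the step parameter (you via $a \le 0$ pointwise, the paper via convexity of the sup-function and $\eta_n(0) = b_n$), and both finish by absorbing the uniform approximation error and invoking \eqref{DescentCond2}. The only cosmetic difference is that the paper proves the suprema over $\underline{d} f(x_n) + z_n$ and over $\underline{d}_{\nu_n} f(x_n) + z_n$ actually coincide for small steps and large $n$, whereas you only bound the discarded extreme points from above by a uniformly negative constant, which suffices.
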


\begin{proof}
By definition for any $n \in \mathbb{N}$ one has
\begin{multline} \label{MCDTechResult_UCA}
  f(x_n + \alpha h_n) - f(x_n) = 
  \max_{(a, v) \in \underline{d} f(x_n)} \big( a + \alpha \langle v, h_n \rangle ) 
  + \min_{(b, w) \in \overline{d} f(x_n)} \big( b + \alpha \langle w, h_n \rangle ) \\
  + \alpha \varepsilon_n(\alpha)
  \le \max_{(a, v) \in \underline{d} f(x_n) + z_n} \big( a + \alpha \langle v, h_n \rangle )
  + \alpha \varepsilon_n(\alpha),
\end{multline}
where $\varepsilon_n(\alpha) = \varepsilon_f\left( \| h_n \| \alpha / r, \Delta x_n, x_n, r \right)$,
$\Delta x_n = r h_n / \| h_n \| \in B(x_n, r) \subset U$, and 
$0 \le \alpha \le \widehat{\alpha} := \min\{ 1, r / \sup_{n} \| h_n \| \}$ is arbitrary 
(note that $\widehat{\alpha} > 0$ due to the fact that the sequence $\{ h_n \}$ is bounded). Our aim is to prove that
there exist $n_1 \in \mathbb{N}$ and $\alpha_1 \in (0, \widehat{\alpha}]$ such that for all $n \ge n_1$ and 
$\alpha \in [0, \alpha_1]$ the set $\underline{d} f(x_n) + z_n$ in \eqref{MCDTechResult_UCA} can be replaced by the
smaller set $\underline{d}_{\nu_n} f(x_n) + z_n$, i.e.
\begin{equation} \label{MCDTechResult_WorseUCA}
  f(x_n + \alpha h_n) - f(x_n) \le 
  \sup_{(a, v) \in \underline{d}_{\nu_n} f(x_n) + z_n} \big( a + \alpha \langle v, h_n \rangle)
  + \alpha \varepsilon_n(\alpha)
\end{equation}
for all $n \ge n_1$ and $\alpha \in (0, \alpha_1]$. 

Before we turn to the proof of inequality \eqref{MCDTechResult_WorseUCA}, let us first demonstrate that the validity of
the lemma follows directly from this inequality. Indeed, denote
$$
  \eta_n(\alpha) = \sup_{(a, v) \in \underline{d}_{\nu_n} f(x_n) + z_n} \big( a + \alpha \langle v, h_n \rangle \big).
$$
From \eqref{MCDTechResult_WorseUCA} and the convexity of $\eta_n(\alpha)$ it follows that
\begin{multline*}
  f(x_n + \alpha h_n) - f(x_n) \le \alpha \eta_n(1) + (1 - \alpha) \eta_n(0) + \alpha \varepsilon_n(\alpha) \\
  = \alpha \sup_{(a, v) \in \underline{d}_{\nu_n} f(x_n) + z_n} \big( a + \langle v, h_n \rangle)
  + (1 - \alpha) \eta_n(0) + \alpha \varepsilon_n(\alpha)
\end{multline*}
for all $n \ge n_1$ and $\alpha \in [0, \alpha_1]$. Hence applying \eqref{DescentCond1} and taking into account the fact
that $\eta_n(0) = b_n$ due to \eqref{BothApproxZeroAtZero} one obtains that
$$
  f(x_n + \alpha h_n) - f(x_n) \le - \alpha \theta + (1 - \alpha) b_n + \alpha \varepsilon_n(\alpha)
$$
for all $n \ge n_1$ and $\alpha \in [0, \alpha_1]$. Recall that the codifferential mapping $D f$ uniformly
approximates the function $f$ on the set $\{ x_n \}_{n \in \mathbb{N}}$. Therefore there exists $\alpha_2 > 0$ such that
for all $n \in \mathbb{N}$ and $\alpha \in [0, \alpha_2]$ one has $|\varepsilon_n(\alpha)| < \theta / 3$. Hence for any
$n \ge n_1$ one has
$$
  f(x_n + \gamma h_n) - f(x_n) 
  \le - \gamma \frac{2 \theta}{3} + (1 - \gamma) b_n, \quad
  \gamma = \min\{ \alpha_0, \alpha_1, \alpha_2 \}.
$$
By our assumption $b_n \to 0$ as $n \to \infty$. Therefore for any sufficiently large $n \in \mathbb{N}$ one has
$$
  f(x_n + \gamma h_n) - f(x_n) \le - \gamma \frac{\theta}{3},
$$
which with the use of \eqref{DescentCond2} implies that 
$f(x_{n + 1}) \le f(x_n) - \gamma \theta / 3 + \varepsilon_n$ for all $n$ large enough. Consequently, taking
into account the fact that $\varepsilon_n \to 0$ as $n \to \infty$ one obtains that $f(x_n) \to - \infty$ 
as $n \to \infty$.

Thus, it remains to verify that inequality \eqref{MCDTechResult_WorseUCA} holds true. Denote 
$$
  g_n(\alpha) = \max_{(a, v) \in \underline{d} f(x_n) + z_n} \big( a + \alpha \langle v, h_n \rangle \big).
$$
Let us prove that there exist $n_1 \in \mathbb{N}$ and $\alpha_1 > 0$ such that $g_n(\alpha) = \eta_n(\alpha)$ for all
$n \ge n_1$ and $\alpha \in [0, \alpha_1]$. Then \eqref{MCDTechResult_WorseUCA} follows directly from
\eqref{MCDTechResult_UCA}.

From the inclusion $\underline{d}_{\nu_n} f(x_n) \subseteq \underline{d} f(x_n)$ it follows that
$g_n(\alpha) \ge \eta_n(\alpha)$ for all $\alpha \ge 0$ and $n \in \mathbb{N}$. We need to check that the converse
inequality holds true for any sufficiently small $\alpha \ge 0$, and for all $n \in \mathbb{N}$ large enough. It is
clear that 
\begin{equation} \label{SupOverClosure_vs_SupOverSet}
  \max_{(a, v) \in \underline{d} f(x_n)} \big( a + \alpha \langle v, h_n \rangle \big)
   = \sup_{(a, v) \in \extr \underline{d} f(x_n)} \big( a + \alpha \langle v, h_n \rangle \big).
\end{equation}
By our assumption the sets $\underline{d} f(x_n)$ and $\overline{d} f(x_n)$ lie within a bounded set $K$. 
Define $C_1 = \sup_{z \in K} \| z \|$, $C_2 = \sup_{n \in \mathbb{N}} \| h_n \|$ and $\alpha_1 = \nu^* / 4 C_1 C_2$.
Then for any $n \in \mathbb{N}$, $\alpha \in [0, \alpha_1]$ and $(a, v) \in \extr \underline{d} f(x_n)$ one has
$\alpha \langle v, h_n \rangle \ge -\nu^* / 4$, which implies that
$$
  a + \alpha \langle v, h_n \rangle 
  \begin{cases}
  \ge - 0.5 \, \nu^*, & \text{if~} a \ge - 0.25 \, \nu^*, \\
  < - 0.5 \, \nu^*, & \text{if~} a < -0.75 \, \nu^*.
  \end{cases}
$$
Therefore, for any $n \in \mathbb{N}$ and $\alpha \in [0, \alpha_1]$ one has
$$
  \sup_{(a, v) \in \extr \underline{d} f(x_n)} \big( a + \alpha \langle v, h_n \rangle \big) 
  = \sup_{(a, v) \in \extr \underline{d} f(x_n) \colon a \ge - 0.75 \nu^*} 
  \big( a + \alpha \langle v, h_n \rangle \big) \ge - \frac{\nu^*}{2}.
$$
By the definition of $\nu^*$ there exists $n_1 \in \mathbb{N}$ such that $\nu_n \ge 0.75 \nu^*$ for all $n \ge n_1$.
Consequently, taking into account the definition of $\underline{d}_{\nu} f(x)$ and equality
\eqref{SupOverClosure_vs_SupOverSet} one obtains that $g_n(\alpha) = \eta_n(\alpha)$ for all 
$\alpha \in [0, \alpha_1]$ and $n \ge n_1$, and the proof is complete.	 
\end{proof}

\subsection{Global convergence}

Now we can prove the global convergence of the MCD.

\begin{theorem} \label{Th_GlobalConvergence}
Suppose that $x^* \in U$ is a cluster point of a sequence $\{ x_n \}$ generated by the MCD, the codifferential mapping
$D f$ is continuous at $x^*$, and uniformly approximates the function $f$ in a neighbourhood of this point. Let also $f$
be bounded below on $U$, $\liminf_{n \to \infty} \nu_n > 0$ and $\liminf_{n \to \infty} \mu_n > 0$. Suppose finally that
one of the two following assumptions holds true:
\begin{enumerate}
\item{$\mathcal{H}$ is finite dimensional;
}
\item{$\{ (b, w) \in \overline{d} f(x_n) \mid b \le \widehat{\mu} \} \subseteq \overline{d}_{\mu_n} f(x_n)$ for some
$\widehat{\mu} > 0$, and for all sufficiently large $n \in \mathbb{N}$. \label{Assumpt_InfiniteDim_Hyperdiff}
}
\end{enumerate}
Then $x^*$ is an inf-stationary point of the function $f$. If, in addition, $f$ is convex, then $x^*$ is a point of
global minimum of the function $f$.
\end{theorem}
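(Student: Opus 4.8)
The plan is to argue by contradiction and reduce everything to Lemma~\ref{Lemma_GlobalDivergence}, applied to a subsequence converging to $x^*$. Suppose $x^*$ is not inf-stationary. By Corollary~\ref{Corollary_OptimalityCond} together with \eqref{InfStatPoint_Def} this means there exists $(0, w_0) \in \extr \overline{d} f(x^*)$ with $0 \notin \underline{d} f(x^*) + (0, w_0)$; since this set is weakly compact and convex, $\rho^* := \dist(0, \underline{d} f(x^*) + (0, w_0)) > 0$. Let $\{x_{n_k}\}$ be a subsequence converging to $x^*$. Discarding finitely many terms I may assume every $x_{n_k}$ lies in the neighbourhood of $x^*$ on which $D f$ is continuous and uniformly approximates $f$, so that hypotheses~1--3 of Lemma~\ref{Lemma_GlobalDivergence} hold for $\{x_{n_k}\}$ (a common radius $r$, uniform approximation, and boundedness of $\{\underline{d} f(x_{n_k})\}$, $\{\overline{d} f(x_{n_k})\}$ from Hausdorff continuity), while hypothesis~4 is $\liminf \nu_n > 0$.

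The first genuine step is to produce $z_{n_k} = (b_{n_k}, w_{n_k}) \in \overline{d}_{\mu_{n_k}} f(x_{n_k})$ with $z_{n_k} \to (0, w_0)$ (hence $b_{n_k} \to 0$), and this is exactly where the two alternatives enter. Under assumption~\ref{Assumpt_InfiniteDim_Hyperdiff}, Hausdorff continuity yields $z_{n_k} \in \overline{d} f(x_{n_k})$ with $z_{n_k} \to (0, w_0)$, and since $b_{n_k} \to 0 < \widehat{\mu}$, the inclusion $\{(b,w) \in \overline{d} f(x_{n_k}) \mid b \le \widehat{\mu}\} \subseteq \overline{d}_{\mu_{n_k}} f(x_{n_k})$ places $z_{n_k}$ in the truncated hyperdifferential for large $k$. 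In the finite-dimensional case I would instead invoke Lemma~\ref{Lemma_LimitOfExtremePoints}: since $\overline{d} f(x_{n_k}) \to \overline{d} f(x^*)$ and $(0, w_0) \in \extr \overline{d} f(x^*)$, after passing to a further subsequence there are extreme points $z_{n_k} \in \extr \overline{d} f(x_{n_k})$ with $z_{n_k} \to (0, w_0)$; as $b_{n_k} \to 0$ and $\liminf \mu_n > 0$, \eqref{ReducedCodifferentialDef} again puts $z_{n_k} \in \overline{d}_{\mu_{n_k}} f(x_{n_k})$ for large $k$. That Lemma~\ref{Lemma_LimitOfExtremePoints} is available only in finite dimensions is precisely what forces the extra assumption~\ref{Assumpt_InfiniteDim_Hyperdiff} in the infinite-dimensional setting.

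Let $(a_{n_k}, v_{n_k})$ be the minimal-norm element of $S_{n_k} := \cl\co \underline{d}_{\nu_{n_k}} f(x_{n_k}) + z_{n_k}$ computed by the MCD (note $\{v_{n_k}\}$ is bounded, being contained in a bounded set), and I would feed the \emph{scaled} directions $h_{n_k} = -\tau v_{n_k}$ into Lemma~\ref{Lemma_GlobalDivergence}, with $\tau \in (0,1]$ a small constant fixed below. Condition~\eqref{DescentCond2} then holds with $\varepsilon_k = 0$: the MCD minimizes $f$ along $-v_{n_k}$ for the admissible vector $z_{n_k}$, and $\{f(x_n)\}$ is non-increasing by Lemma~\ref{Lemma_DescentDirections}, so $f(x_{n_{k+1}}) \le f(x_{n_k + 1}) \le \inf_{\alpha \in [0, \alpha_0]} f(x_{n_k} + \alpha h_{n_k})$ once $\alpha_0$ is small enough that the steps stay in $U$ and within $[0, \alpha_*]$. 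The crux is \eqref{DescentCond1}. Here I use two ingredients. First, since $S_{n_k} \subseteq \underline{d} f(x_{n_k}) + z_{n_k}$ and the latter converges in the Hausdorff metric to $\underline{d} f(x^*) + (0, w_0)$, continuity of $\dist(0, \cdot)$ gives $\|(a_{n_k}, v_{n_k})\| = \dist(0, S_{n_k}) \ge \dist(0, \underline{d} f(x_{n_k}) + z_{n_k}) \to \rho^* > 0$, so $\rho_{n_k}^2 := \|(a_{n_k}, v_{n_k})\|^2 \ge \delta$ for some $\delta > 0$; in particular $0 \notin S_{n_k}$, and the minimal-norm inequality \eqref{SeparationTheorem} applies. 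Second, by \eqref{BothApproxZeroAtZero} every $(a, v) \in S_{n_k}$ satisfies $a \le b_{n_k}$, while $|a_{n_k}| \le M$ uniformly. Combining \eqref{SeparationTheorem} with $a \le b_{n_k}$,
\begin{equation*}
  \sup_{(a, v) \in S_{n_k}} \big( a - \tau \langle v, v_{n_k} \rangle \big)
  \le \sup_{(a, v) \in S_{n_k}} \big( a(1 + \tau a_{n_k}) - \tau \rho_{n_k}^2 \big)
  \le b_{n_k}(1 + \tau a_{n_k}) - \tau \delta,
\end{equation*}
provided $\tau \le 1/(2M)$ so that $1 + \tau a_{n_k} > 0$. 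Since $b_{n_k} \to 0$, the right-hand side tends to $-\tau \delta < 0$, giving \eqref{DescentCond1} with $\theta = \tau \delta / 2$ for all large $k$.

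The reason the rescaling is indispensable—and the main obstacle to a naive argument—is that the unscaled direction $-v_{n_k}$ need \emph{not} satisfy \eqref{DescentCond1}: the convex model $\alpha \mapsto \sup_{S_{n_k}}(a + \alpha \langle v, -v_{n_k} \rangle)$ has strictly negative one-sided derivative at $\alpha = 0$ (which is what makes $-v_{n_k}$ a descent direction, cf.\ Lemma~\ref{Lemma_DescentDirections}), yet it may well return to $0$ at the unit step $\alpha = 1$, so a uniform bound $-\theta$ can fail for $\tau = 1$. Shrinking the step keeps us where the model is strictly negative, and the uniform lower bound $\delta$ on $\rho_{n_k}^2$ together with $b_{n_k} \to 0$ converts this into a uniform descent. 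With \eqref{DescentCond1} and \eqref{DescentCond2} in hand, Lemma~\ref{Lemma_GlobalDivergence} (applied to $\{x_{n_k}\}$) yields $f(x_{n_k}) \to -\infty$, contradicting boundedness of $f$ below on $U$; hence $x^*$ is inf-stationary. Finally, if $f$ is convex, inf-stationarity gives $f'(x^*, \cdot) \ge 0$ by Proposition~\ref{Prp_NessOptCond}, and convexity upgrades this to $f(x^* + h) \ge f(x^*) + f'(x^*, h) \ge f(x^*)$ for all $h$, i.e.\ $x^*$ is a global minimizer.
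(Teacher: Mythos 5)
Your proposal is correct and follows essentially the same route as the paper's own proof: argue by contradiction, use Lemma~\ref{Lemma_LimitOfExtremePoints} (finite-dimensional case) or assumption~\ref{Assumpt_InfiniteDim_Hyperdiff} (infinite-dimensional case) to produce $z_{n_k} \in \overline{d}_{\mu_{n_k}} f(x_{n_k})$ converging to the violating extreme point $(0, w_0)$, exploit Hausdorff continuity of $\underline{d} f(\cdot)$ to bound the minimal-norm elements away from zero, and feed suitably \emph{scaled} directions into Lemma~\ref{Lemma_GlobalDivergence} to force $f(x_{n_k}) \to -\infty$. Your scaling $\tau \le 1/(2M)$ plays exactly the role of the paper's $\gamma$ with $-1 < \gamma a_k < 1$, and your verification of \eqref{DescentCond1} via the minimal-norm inequality and of \eqref{DescentCond2} via monotonicity of $\{f(x_n)\}$ matches the paper's argument step for step.
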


\begin{proof}
Arguing by reductio ad absurdum, suppose that $x^*$ is not an inf-stationary point of the function $f$. Then there
exists $z^* = (0, w^*) \in \extr \overline{d} f(x^*)$ such that 
$\theta = \min \{ \| (a, v) \|^2 \mid (a, v) \in \underline{d} f(x^*) + z^* \} > 0$. 
Our aim is to apply Lemma~\ref{Lemma_GlobalDivergence}.

Since $x^*$ is a cluster point of the sequence $\{ x_n \}$, there exists a subsequence $\{ x_{n_k} \}$ converging
to $x^*$. Therefore $\overline{d} f(x_{n_k}) \to \overline{d} f(x^*)$ as $k \to \infty$ in the Hausdorff metric due to
the continuity of the codifferential mapping $D f$ at $x^*$. Hence applying Lemma~\ref{Lemma_LimitOfExtremePoints} and
the fact that $\liminf_{n \to \infty} \mu_n > 0$ in the case when $\mathcal{H}$ is finite dimensional or
assumption \ref{Assumpt_InfiniteDim_Hyperdiff} in the case when $\mathcal{H}$ is infinite dimensional one obtains that
there exists a subsequence of the sequence $\{ x_{n_k} \}$, which we denote again by $\{ x_{n_k} \}$, and there exists
$z_{n_k} = (b_{n_k}, w_{n_k}) \in \overline{d}_{\mu_{n_k}} f(x_{n_k})$ such that $z_{n_k} \to z^*$, i.e.
$b_{n_k} \to 0$ and $w_{n_k} \to w^*$, as $k \to \infty$. Consequently, taking into account the fact that the
multifunction $\underline{d} f(\cdot)$ is Hausdorff continuous at $x^*$ by our assumption one obtains
that $\| (a_{n_k}(z_{n_k}), v_{n_k}(z_{n_k})) \|^2 > \theta / 2$ for all $k$ greater than some $k_1 \in \mathbb{N}$.

Denote $a_k = a_{n_k}(z_{n_k})$ and $v_k = v_{n_k}(z_{n_k})$. From the definition of $(a_k, v_k)$, and the necessary and
sufficient conditions for a minimum of a convex function on a convex set it follows that
\begin{equation} \label{SeparationTheorem_GeneralCase}
  - (a + b_{n_k}) a_k - \big\langle v + w_{n_k}, v_k \big\rangle
  \le - \big\| ( a_k, v_k ) \big\|^2 < - \frac{\theta}{2}
\end{equation}
for all $(a, v) \in \underline{d}_{\nu_{n_k}} f(x_{n_k})$ and $k \ge k_1$. Therefore
$$
  \sup_{(a, v) \in \underline{d}_{\nu_{n_k}} f(x_{n_k}) + z_{n_k}} ( a + \alpha \langle v, - v_k \rangle ) 
  \le - \alpha \frac{\theta}{2} 
  + \max_{(a, v) \in \underline{d}_{\nu_{n_k}} f(x_{n_k})} \big( (a + b_{n_k}) (1 + \alpha a_k) \big).
$$
for all $\alpha \ge 0$ and $k \ge k_1$. Taking into account the facts that the codifferential mapping $D f$ is
continuous at $x^*$, and $x_{n_k} \to x^*$ as $k \to \infty$ one obtains that the sequences 
$\{ \underline{d} f(x_{n_k}) \}$ and $\{ \overline{d} f(x_{n_k}) \}$ lie within a bounded set,
which, in particular, implies that the sequences $\{ a_k \}$ and $\{ v_k \}$ are bounded. Consequently, there exists
$\gamma > 0$ such that $-1 < \gamma a_k < 1$ for all $k \ge k_1$, which implies that
$$
  \sup_{(a, v) \in \underline{d}_{\nu_{n_k}} f(x_{n_k}) + z_{n_k}} ( a + \langle v, - \gamma v_k \rangle )
  \le - \gamma \frac{\theta}{2} + (1 + \gamma a_k) b_{n_k} 
  \qquad \forall k \ge k_1
$$
(here we used equalities \eqref{BothApproxZeroAtZero}). By definition $b_{n_k} \to 0$ as $k \to \infty$. Therefore
there exists $k_2 \ge k_1$ such that $b_{n_k} < \gamma \theta / 4$ for all $k \ge k_2$.

Observe that since $x_{n_k} \to x^*$ as $k \to \infty$ and $x^* \in U$, there exist $k_3 \in \mathbb{N}$ and $r > 0$
such that $B(x_{n_k}, r) \subset U$ for all $k \ge k_3$. Furthermore, from the fact that the codifferential mapping 
$D f$ uniformly approximates the function $f$ in a neighbourhood of $x^*$ it follows that $D f$ uniformly approximates
$f$ on the set $\{ x_{n_k} \}_{k \ge k_4}$ for some $k_4 \in \mathbb{N}$. Therefore applying
Lemma~\ref{Lemma_GlobalDivergence} with $\alpha_0 = \alpha_*$ (recall that $\alpha_*$ is the upper bound on the
step size in the MCD), the sequence $\{ x_n \}$ defined as $\{ x_{n_k} \}_{k \ge m}$, and the sequence $\{ h_n \}$
defined as $\{ - \gamma v_k \}_{k \ge m}$, where $m = \max\{ k_2, k_3, k_4 \}$, one obtains that 
$f(x_{n_k}) \to - \infty$ as $k \to \infty$, which is impossible due to the fact that $f$ is bounded below on $U$ (note
that the validity of inequality \eqref{DescentCond2} in this case follows directly from the definitions of $\alpha_n(z)$
and $x_{n + 1}$ in the MCD, and the fact that $f(x_{n + 1}) \le f(x_n)$ for all $n \in \mathbb{N}$).

It remains to note that if $f$ is convex, and $x^*$ is an inf-stationary point of $f$, then $f'(x^*, \cdot) \ge 0$ by
Proposition~\ref{Prp_NessOptCond}, which implies that $x^*$ is a point of global minimum of $f$ due to the convexity
assumption.	 
\end{proof}

\begin{remark}
For the validity of the theorem above it is sufficient to suppose that 
$\{ 0 \} \times \extr \overline{\partial} f(x^*) \subseteq \limsup_{k \to \infty} \extr \overline{d} f(x_{n_k})$, where
$x_{n_k} \to x^*$ as $k \to \infty$. Lemma~\ref{Lemma_LimitOfExtremePoints} guarantees that this inclusion always holds
true in the finite dimensional case. In order to ensure the validity of this inclusion in the infinite dimensional case
we utilized assumption \ref{Assumpt_InfiniteDim_Hyperdiff}, but it should be noted that in many applications the
validity of this inclusion can be verified directly.
\end{remark}

\begin{corollary} \label{Crlr_GlobalConvergence}
Let $\liminf_{n \to \infty} \nu_n > 0$, $\liminf_{n \to \infty} \mu_n > 0$, and $f$ be bounded below on $U$. Suppose
also that the codifferential mapping $D f$ is continuous on $U$ and locally uniformly approximates the function $f$ on
the set $U$. Let finally one of the two following assumptions be valid:
\begin{enumerate}
\item{$\mathcal{H}$ is finite dimensional;
}
\item{$\{ (b, w) \in \overline{d} f(x_n) \mid b \le \widehat{\mu} \} \subseteq \overline{d}_{\mu_n} f(x_n)$ for some
$\widehat{\mu} > 0$, and for all sufficiently large $n \in \mathbb{N}$.
}
\end{enumerate}
Then any cluster point $x^* \in U$ of the sequence $\{ x_n \}$ generated by the MCD is an inf-stationary point of 
the function $f$.
\end{corollary}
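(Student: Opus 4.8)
The plan is to obtain the corollary as a direct specialization of Theorem~\ref{Th_GlobalConvergence}, applied to each cluster point individually. The corollary differs from the theorem only in that its regularity hypotheses on the codifferential mapping $D f$ are stated \emph{globally} on $U$, whereas the theorem requires them merely at (and in a neighbourhood of) a single cluster point $x^*$. Consequently, the entire argument reduces to verifying that the global assumptions localize correctly at an arbitrary cluster point.

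First I would fix an arbitrary cluster point $x^* \in U$ of the sequence $\{ x_n \}$ generated by the MCD. Since $D f$ is continuous on the whole of $U$ and $x^* \in U$, it is in particular continuous at $x^*$, which supplies the first regularity hypothesis of the theorem. Next, because $D f$ locally uniformly approximates $f$ on $U$, by the very meaning of ``locally'' there exists a neighbourhood of $x^*$ on which $D f$ uniformly approximates $f$; this is exactly the second regularity hypothesis required by the theorem. The boundedness of the sequences $\{ \underline{d} f(x_n) \}$ and $\{ \overline{d} f(x_n) \}$ need not be checked here, since it is derived \emph{inside} the proof of the theorem from the continuity of $D f$ at $x^*$ together with the convergence $x_{n_k} \to x^*$ along the relevant subsequence.

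The remaining assumptions transfer verbatim: $f$ is bounded below on $U$, and the conditions $\liminf_{n \to \infty} \nu_n > 0$, $\liminf_{n \to \infty} \mu_n > 0$ are imposed identically in both statements, while whichever of the two alternatives is in force for the corollary --- finite-dimensionality of $\mathcal{H}$, or the inclusion $\{ (b, w) \in \overline{d} f(x_n) \mid b \le \widehat{\mu} \} \subseteq \overline{d}_{\mu_n} f(x_n)$ for some $\widehat{\mu} > 0$ and all large $n$ --- is literally the same alternative demanded by the theorem. With every hypothesis of Theorem~\ref{Th_GlobalConvergence} now verified at $x^*$, I would conclude that $x^*$ is an inf-stationary point of $f$; as $x^*$ was an arbitrary cluster point lying in $U$, this completes the argument.

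I do not anticipate any genuine obstacle: the corollary is a routine localization, and the only point requiring a moment's care is matching the phrase ``locally uniformly approximates $f$ on $U$'' to the theorem's ``uniformly approximates $f$ in a neighbourhood of $x^*$,'' which agree by the definition of local uniform codifferentiability.
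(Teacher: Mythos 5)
Your proposal is correct and takes essentially the same approach as the paper: the paper states this corollary without a separate proof precisely because it is the immediate specialization of Theorem~\ref{Th_GlobalConvergence} to an arbitrary cluster point, with the global hypotheses (continuity of $D f$ on $U$ and local uniform approximation on $U$) localizing at $x^*$ exactly as you describe.
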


It is well-known that under some natural assumptions for any gradient method one has 
$\lim_{n \to \infty} \| \nabla f(x_n) \| = 0$, where $\{ x_n \}$ is a sequence generated by this method 
(see, e.g., \cite{Penot}, Theorem~2.5). Let us extend this result to the case of the MCD.
To this end, for any $\nu \ge 0$ introduce the function
$$
  \omega(x, \nu) = 
  \sup_{(0, w) \in \overline{d} f(x)} 
  \min_{u \in \cl \co \underline{d}_{\nu} f(x) + (0, w)} \| u \|^2
$$
that, in a sense, measures how far a point $x$ is from being an inf-stationary point of the function $f$. In particular,
$x$ is an inf-stationary point of $f$ iff $\omega(x, \nu) = 0$ for some $\nu \ge 0$. It should be noted that 
the function $\omega(x, \nu)$ is not continuous (or even l.s.c.) in $x$ in the general case, even if the function $f$ is
continuously codifferentiable. But $\omega$ is continuous in $x$, if $f$ is continuously hypodifferentiable, and 
$\nu = + \infty$, since in this case $\omega(x, + \infty) = \dist(0, \underline{d} f(x))$.

\begin{theorem} \label{Th_InfStatMeasure_Convergence}
Let $f$ be bounded below on $U$, a sequence $\{ x_n \}$ be generated by the MCD, and the sequences 
$\{ \underline{d} f(x_n) \}$ and $\{ \overline{d} f(x_n) \}$ be bounded. Suppose also that 
$\liminf_{n \to \infty} \nu_n = \nu^* > 0$, there exists $r > 0$ such that $B(x_n, r) \subset U$ for all 
$n \in \mathbb{N}$, and the codifferential mapping $D f$ uniformly approximates 
the function $f$ on the set $\{ x_n \}_{n \ge m}$ for some  $m \in \mathbb{N}$ (in particular, one can suppose that 
$D f$ uniformly approximates $f$ on the set $\{ x \in U \mid f(x) \le f(x_0) \}$). 
Then $\omega(x_n, \nu_n) \to  0$ as $n \to \infty$. In particular, if $f$ is
hypodifferentiable, i.e. $D f(\cdot) = [ \underline{d} f(\cdot), \{ 0 \} ]$, and $\nu_n \equiv + \infty$, 
then $\dist(0, \underline{d} f(x_n)) \to 0$ as $n \to \infty$.
\end{theorem}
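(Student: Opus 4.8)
The plan is to argue by contradiction and reduce everything to Lemma~\ref{Lemma_GlobalDivergence}, following the scheme of the proof of Theorem~\ref{Th_GlobalConvergence}, but working directly along the sequence rather than at a cluster point. Suppose $\omega(x_n, \nu_n) \not\to 0$. Then there exist $\delta > 0$ and a subsequence, which I relabel as $\{ x_n \}$, with $\omega(x_n, \nu_n) \ge \delta$ for all $n$; note that all the hypotheses (boundedness of the codifferentials, $B(x_n, r) \subset U$, uniform approximation, $\liminf \nu_n > 0$) pass to this subsequence. The goal is to extract, for each such $n$, a single element $z_n \in \overline{d}_{\mu_n} f(x_n)$ furnishing a uniformly good descent direction, so that Lemma~\ref{Lemma_GlobalDivergence} forces $f(x_n) \to -\infty$, contradicting the boundedness of $f$ from below.

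The main step is to realise the supremum defining $\omega$ at an extreme point of the hyperdifferential. Recalling that $\{ w \mid (0,w) \in \overline{d} f(x_n) \} = \overline{\partial} f(x_n)$, put $A_n = \cl\co \underline{d}_{\nu_n} f(x_n)$ and consider $g_n(w) = \min_{u \in A_n + (0,w)} \| u \|^2 = \dist\big( (0, -w), A_n \big)^2$, so that $\omega(x_n, \nu_n) = \sup\{ g_n(w) \mid w \in \overline{\partial} f(x_n) \}$. As the square of the distance to a fixed convex set composed with an affine map, $g_n$ is convex, and being norm-continuous it is weakly lower semicontinuous. Since $\overline{\partial} f(x_n)$ is convex and weakly compact, every $w$ in it is a weak limit of convex combinations of extreme points, on which $g_n$ does not exceed its supremum over $\extr \overline{\partial} f(x_n)$; weak lower semicontinuity of $g_n$ then gives $g_n(w) \le \sup_{\extr \overline{\partial} f(x_n)} g_n$, whence $\sup_{\overline{\partial} f(x_n)} g_n = \sup_{\extr \overline{\partial} f(x_n)} g_n$ by the Krein--Milman theorem. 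Consequently there is $w_n \in \extr \overline{\partial} f(x_n)$ with $g_n(w_n) > \delta / 2$. Crucially, $(0, w_n) \in \extr \overline{d} f(x_n)$ and its first coordinate equals $0 \le \mu_n$, so by \eqref{ReducedCodifferentialDef} one has $z_n := (0, w_n) \in \overline{d}_{\mu_n} f(x_n)$; this is precisely why no lower bound on $\mu_n$ is needed here.

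With $z_n$ in hand the remainder mirrors Theorem~\ref{Th_GlobalConvergence}. Writing $(a_n, v_n) = (a_n(z_n), v_n(z_n))$ for the minimal-norm element of $A_n + z_n$, one has $\| (a_n, v_n) \|^2 = g_n(w_n) > \delta / 2$, and since $\{ \underline{d} f(x_n) \}$ and $\{ \overline{d} f(x_n) \}$ are bounded, the sequences $\{ a_n \}$ and $\{ v_n \}$ are bounded. Using the variational inequality characterising the projection $(a_n, v_n)$, together with $b_n = 0$ and the normalisation $\max_{(a,v) \in \underline{d} f(x_n)} a = 0$ from \eqref{BothApproxZeroAtZero}, the same computation as in the proof of Theorem~\ref{Th_GlobalConvergence} shows that for a sufficiently small $\gamma > 0$, chosen uniformly so that $1 + \gamma a_n \ge 1/2$, the bounded sequence $h_n = -\gamma v_n$ satisfies $\sup_{(a,v) \in \underline{d}_{\nu_n} f(x_n) + z_n}( a + \langle v, h_n \rangle ) \le -\gamma \delta / 2$. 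Condition \eqref{DescentCond2} holds with $\varepsilon_n \equiv 0$, since $z_n \in \overline{d}_{\mu_n} f(x_n)$ is processed in step~(d) of the MCD, giving $f(x_{n+1}) \le f(x_n - \alpha_n(z_n) v_n) = \inf_{\beta \in [0, \alpha_*]} f(x_n - \beta v_n)$, which dominates $\inf_{\alpha \in [0, \alpha_*]} f(x_n + \alpha h_n)$ once $\gamma \le 1$. All hypotheses of Lemma~\ref{Lemma_GlobalDivergence} are now met (with $b_n = 0 \to 0$), so $f(x_n) \to -\infty$, a contradiction. The displayed special case follows because for hypodifferentiable $f$ one has $\overline{\partial} f(x) = \{ 0 \}$, and for $\nu = +\infty$ one has $\cl\co \underline{d}_{+\infty} f(x) = \underline{d} f(x)$, so $\omega(x, +\infty) = \dist(0, \underline{d} f(x))^2$.

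The main obstacle, and the only genuinely new point relative to Theorem~\ref{Th_GlobalConvergence}, is the extreme-point reduction of the supremum in the infinite-dimensional setting: one must be careful that $g_n$ is merely weakly lower semicontinuous rather than weakly continuous, and verify that this one-sided semicontinuity suffices, via Krein--Milman, to push the supremum onto $\extr \overline{\partial} f(x_n)$ and hence into $\overline{d}_{\mu_n} f(x_n)$.
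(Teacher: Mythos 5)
Your proof is correct and follows essentially the same route as the paper's: argue by contradiction along a subsequence where $\omega(x_n,\nu_n)$ stays bounded away from zero, produce for each such $n$ an element $z_n = (0, w_n) \in \overline{d}_{\mu_n} f(x_n)$ whose associated minimal-norm element of $\cl\co\underline{d}_{\nu_n} f(x_n) + z_n$ has norm squared bounded below, repeat the computation from the proof of Theorem~\ref{Th_GlobalConvergence} to obtain the uniform descent inequality \eqref{DescentCond1} with $h_n = -\gamma v_n$, and invoke Lemma~\ref{Lemma_GlobalDivergence} to force $f(x_n) \to -\infty$, contradicting boundedness below. The one place where you go beyond the paper is the extreme-point reduction: the paper asserts merely ``by the definition of $\omega$'' that the near-maximizing $(0,w_n)$ can be taken inside $\overline{d}_{\mu_n} f(x_n)$, whereas you justify this by showing that the squared distance $g_n$ is convex and weakly lower semicontinuous, so by Krein--Milman its supremum over $\overline{\partial} f(x_n)$ equals that over $\extr \overline{\partial} f(x_n)$, whose elements $(0,w_n)$ lie in $\extr\overline{d} f(x_n)$ and hence in $\overline{d}_{\mu_n} f(x_n)$ by \eqref{ReducedCodifferentialDef} for every $\mu_n \ge 0$ --- this makes rigorous a step the paper glosses over and correctly explains why the theorem needs no lower bound on $\mu_n$.
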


\begin{proof}
Arguing by reductio ad absurdum, suppose that the theorem is false. Then there exist a subsequence $\{ x_{n_k} \}$ and
$\theta > 0$ such that $\omega(x_{n_k}, \nu_{n_k}) > \theta$ for all $k \in \mathbb{N}$. By the definition of
$\omega$ for any $k \in \mathbb{N}$ there exists $z_{n_k} = (0, w_{n_k}) \in \overline{d}_{\mu_{n_k}} f(x_{n_k})$ such
that $\| ( a_{n_k}(z_{n_k}), v_{n_k}(z_{n_k}) ) \|^2 \ge \theta$. Then taking into account the fact that the sequences 
$\{ \underline{d} f(x_n) \}$ and $\{ \overline{d} f(x_n) \}$ are bounded, and arguing in the same way as
in the proof of Theorem~\ref{Th_GlobalConvergence} one can easily check that there exists $\gamma > 0$ such that
$$
  \sup_{(a, v) \in \underline{d}_{\nu_{n_k}} f(x_{n_k}) + z_{n_k}} ( a + \langle v, - \gamma v_{n_k}(z_{n_k}) \rangle )
  \le - \gamma \theta 
  \qquad \forall k \in \mathbb{N}.
$$
Consequently, applying Lemma~\ref{Lemma_GlobalDivergence} one obtains that $f(x_{n_k}) \to - \infty$ as $k \to \infty$,
which contradicts the assumption that $f$ is bounded below on $U$.	 
\end{proof}

\begin{remark}
It is easy to verify that Theorems~\ref{Th_GlobalConvergence} and \ref{Th_InfStatMeasure_Convergence} remain to
hold true in the case when the search directions $v_n(z_n)$ are replaced by some approximations $\widetilde{v}_n(z_n)$
such that
\begin{equation} \label{GoodApproximation}
  \big\| \widetilde{v}_n(z_n) -  v_n(z_n) \big\| \le \varepsilon_n,
\end{equation}
where $\varepsilon_n \to + 0$ as $n \to \infty$. Namely, note that in this case one has
$$
  - (a + b_{n_k}) a_{n_k}(z_{n_k}) - \big\langle v + w_{n_k}, \widetilde{v}_{n_k}(z_{n_k}) \big\rangle
  \le - \frac{\theta}{2} + \varepsilon_{n_k} C \\
$$
for a sufficiently large $C > 0$, and for all $(a, v) \in \underline{d}_{\nu_{n_k}} f(x_{n_k})$ and $k \ge k_1$
(see~\eqref{SeparationTheorem_GeneralCase}). With the use of this estimate and the fact that $\varepsilon_{n_k} \to 0$
one can easily obtain the required results. In particular, one can extend Theorems~\ref{Th_GlobalConvergence} and
\ref{Th_InfStatMeasure_Convergence} to the case when instead of the sets $\underline{d} f(x_n)$ and $\overline{d}
f(x_n)$ one uses their approximations, provided these approximations are ``good enough'', i.e. provided inequality
\eqref{GoodApproximation} holds true for the corresponding approximate search directions. 

Similarly, Theorems~\ref{Th_GlobalConvergence} and \ref{Th_InfStatMeasure_Convergence} remain to hold true if
the step sizes $\alpha_n(z)$ are computed only approximately. Namely, it is sufficient to suppose that
\begin{multline*}
  f(x_n - \alpha_n(z) v_n(z)) \\
  \le \inf\Big\{ f(x_n - \alpha_n(z) v_n(z) \mid 
  \alpha \in [0, \alpha_*] \colon x_n - \alpha_n(z) v_n(z) \in U \Big\} + \varepsilon_n,
\end{multline*}
where $\varepsilon_n \to +0$ as $n \to \infty$. Thus, one can say that the MCD is somewhat robust with respect to
computational errors.
\end{remark}

Note that in the general case for some $z = (b, w) \in \overline{d}_{\mu_n} f(x_n)$ with $b > 0$ the corresponding
search direction $- v_n(z)$ might not be a descent direction of the function $f$, i.e. $f'(x_n, - v_n(z)) > 0$. 

\begin{example}
Let $\mathcal{H} = \mathbb{R}^2$, $x_0 = (0, 0)$ and
$$
  f(x_1, x_2) = \max\{ x_1 + x_2, x_1^2 + x_2^2 - 1 \}
  + \min\{ x_1^3 + x_2^3, - 2 x_1 - x_2 + 1, - x_1 - 2 x_2 + 2 \},
$$
With the use of Therorem~\ref{Theorem_Codiff_Max_Min}, Corollary~\ref{Corollary_Codiff_Lin_Combin} and
Example~\ref{Example_Differentiable} one gets
$$	
  \underline{d} f(x_0) = 
  \co\left\{ \begin{pmatrix} 0 \\ 1 \\ 1 \end{pmatrix}, \begin{pmatrix} -1 \\ 0 \\ 0 \end{pmatrix} \right\}, \quad
  \overline{d} f(x_0) = \co\left\{ \begin{pmatrix} 0 \\ 0 \\ 0 \end{pmatrix},
  \begin{pmatrix} 1 \\ -2 \\ -1 \end{pmatrix}, \begin{pmatrix} 2 \\ -1 \\ -2 \end{pmatrix} \right\}
$$
(note that equalities \eqref{BothApproxZeroAtZero} hold true). Let $\nu = 0.5$ and $\mu = 1$, and define
$$
  \underline{d}_{\nu} f(x_0) = \left\{ \begin{pmatrix} 0 \\ 1 \\ 1 \end{pmatrix} \right\}, \quad
  \overline{d}_{\mu} f(x_0) = \left\{ \begin{pmatrix} 0 \\ 0 \\ 0 \end{pmatrix},
  \begin{pmatrix} 1 \\ -2 \\ -1 \end{pmatrix} \right\}.
$$
Observe that $f$ is differentiable at $x_0$ and $\nabla f(x_0) = (1, 1)$. Hence for 
$z = (1, -2, -1) \in \overline{d}_{\mu} f(x_0)$ one has $v_0(z) = (- 1, 0)$, and $f'(x_0, -v_0(z)) = 1$.
\end{example}

Thus, the only reasonable step size rule in the case when $z = (b, w) \in \overline{d}_{\mu_n} f(x_n)$ is such that
$b > 0$ is the minimization along the direction $- v_n(z)$ over some interval $[0, \alpha_*]$. It should be noted that
such directions $v_n(z)$ cannot be excluded, since otherwise the MCD might converge to a non-stationary point (see
the proof of Theorem~\ref{Th_GlobalConvergence}). Let us also note that the choice of $\alpha_*$ is usually
heuristic, since if $\alpha_*$ is too small, then one might not benifit from the use of a non-decent direction $v_n(z)$
(i.e. the method would be unable to ``jump over'' a local minimum), while if $\alpha_*$ is too large, then the line
search might take unreasonably long time. 

Note finally that Lemma~\ref{Lemma_DescentDirections} implies that for any 
$z = (0, w) \in \overline{d}_{\mu_n} f(x_n)$ one has $f'(x_n, -v_n(z)) \le - \| (a_n(z), v_n(z)) \|^2$. In this case one
can compute the step size $\alpha_n(z)$ as follows:
\begin{equation} \label{ArmijoStepSize}
  \alpha_n(z) = \max_{k \in \mathbb{N} \cup \{ 0 \} } \Big\{ \gamma^k \Bigm|
  f(x_n - \gamma^k v_n(z)) - f(x_n) \le - \sigma \gamma^k \big\| (a_n(z), v_n(z)) \big\|^2 \Big\}.
\end{equation}
Here $\sigma, \gamma \in (0, 1)$ are fixed. It is easy to see that if $D f$ uniformly approximates the function
$f$ in a neighbourhood of a point $x^*$, and a sequence $\{ x_n \}$ generated by the MCD converges to $x^*$, then for
any $x_n$ in a neighbourhood of $x^*$ step sizes \eqref{ArmijoStepSize} are bounded away from zero. With the use of
this result one can verify that Theorems~\ref{Th_GlobalConvergence} and \ref{Th_InfStatMeasure_Convergence} remain to
hold true if in the MCD for any $z = (0, w) \in \overline{d}_{\mu_n} f(x_n)$ one uses the step size
rule~\eqref{ArmijoStepSize}.

\section{The quadratic regularization of the MCD}
\label{Section_QR_MCD}

In this section we present and analyse a different method for minimizing a codifferentiable function $f$, which we call 
\textit{the quadratic regularization of the MCD}. In the case when $f$ is the maximum of a finite family of continuously
differentiable functions, this method coincides with the PPP algorithm
\cite{Pschenichny,PironneauPolak,DaugavenMalozemov,Polak}. Since the quadratic regularization of the MCD can be easily
applied not only to the problem of unconstrained minimization of a codifferentiable function, but also to the problem of
minimizing a codifferentiable function over a convex set, below we consider the constrained version of the problem.

Let, as above, $U \subseteq \mathcal{H}$ be an open set, a function $f \colon U \to \mathbb{R}$ be codifferentiable on
$U$, and $D f$ be its fixed codifferential mapping. Let also $A \subset U$ be a closed convex set. Below, we study the
problem of minimizing the function $f$ over the set $A$. Let us first obtain necessary optimality conditions for this
problem.

For any $x \in U$ and $z \in \overline{d} f(x)$ introduce the continuous convex function
\begin{equation} \label{QuadReg_Def}
  \varphi(h, z, x, \nu) = 
  \max_{(a, v) \in \cl\co\underline{d}_{\nu} f(x) + z} \big( a + \langle v, h \rangle \big)
  + \frac{1}{2} \| h \|^2 \quad \forall h \in \mathcal{H}.
\end{equation}
In the case when some sequences $\{ \nu_n \} \subset [0, + \infty]$ and $\{ x_n \} \subset U$ are given,
we denote $\varphi_n(h, z) = \varphi(h, z, x_n, \nu_n)$. Let us note that the quadratic term is introduced into the
definition of the function $\varphi(h, z, x, \nu)$ in order to ensure that this function attains a global minimum in $h$
on the set $A - x$.

\begin{proposition} \label{Prp_OptCond_ConvexSet}
Let $x^* \in A$ be a point of local minimum of the function $f$ on the set $A$. Then for any $\nu \ge 0$ one has
\begin{equation} \label{OptCond_ConvexSet}
  \{ 0 \} = \argmin_{h \in A - x^*} \varphi (h, z, x^*. \nu)
  \qquad \forall z = (0, w) \in \extr \overline{d} f(x^*).
\end{equation}
Furthermore, \eqref{OptCond_ConvexSet} holds true iff one of the two following statements is valid:
\begin{enumerate}
\item{$0$ is a global minimizer of the function $\varphi(\cdot, z, x^*, \nu) - \| \cdot \|^2 / 2$ on 
the set $A - x^*$; \label{EquivOptCond_ConvexSet}
}

\item{$f'(x^*, h) \ge 0$ for all $h \in A - x^*$.
}
\end{enumerate}
In particular, optimality condition \eqref{OptCond_ConvexSet} is independent of the choice of a codifferential and
parameter $\nu \ge 0$.
\end{proposition}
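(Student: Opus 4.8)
The plan is to reduce the entire statement to the single directional-derivative condition $f'(x^*,h)\ge 0$ for all $h\in A-x^*$ (item~2 above), which I shall call $(\ast)$, and to prove the chain ``$x^*$ is a local minimizer'' $\Rightarrow (\ast) \Leftrightarrow \eqref{OptCond_ConvexSet} \Leftrightarrow$ item~1. The implication from local minimality to $(\ast)$ is the easy part: if $x^*\in A$ is a local minimizer of $f$ over $A$ and $h\in A-x^*$, then by convexity of $A$ the segment $x^*+th$ lies in $A$ for $t\in[0,1]$, so $f(x^*+th)\ge f(x^*)$ for all small $t>0$, whence $f'(x^*,h)=\lim_{t\to+0}\frac1t\big(f(x^*+th)-f(x^*)\big)\ge 0$. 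Since $(\ast)$ refers neither to the codifferential nor to $\nu$, once the equivalence $(\ast)\Leftrightarrow\eqref{OptCond_ConvexSet}$ is established the asserted independence follows immediately.

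The crux is a computation of the directional derivative at $h=0$ of the convex function $\psi_z(h):=\varphi(h,z,x^*,\nu)-\tfrac12\|h\|^2=\max_{(a,v)\in\cl\co\underline{d}_{\nu}f(x^*)}(a+\langle v+w,h\rangle)$, where $z=(0,w)$. I would first prove the \emph{top-face identity}
\begin{equation*}
  \big\{(a,v)\in\cl\co\underline{d}_{\nu}f(x^*)\bigm| a=0\big\}=\{0\}\times\underline{\partial}f(x^*)
  \qquad\forall\,\nu\ge 0.
\end{equation*}
Indeed, by \eqref{BothApproxZeroAtZero} one has $\max_{(a,v)\in\underline{d}f(x^*)}a=0$, so the exposed face of $\underline{d}f(x^*)$ on which $a=0$ is exactly $\{0\}\times\underline{\partial}f(x^*)$ (see \eqref{Quasidiff_Of_Codiff_Func}); its extreme points are extreme points of $\underline{d}f(x^*)$ carrying the value $a=0\ge-\nu$, hence lie in $\underline{d}_{\nu}f(x^*)$ by \eqref{ReducedCodifferentialDef}, and by the Krein--Milman theorem $\{0\}\times\underline{\partial}f(x^*)\subseteq\cl\co\underline{d}_{\nu}f(x^*)\subseteq\underline{d}f(x^*)$. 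Sandwiching then forces the top faces to coincide for every $\nu\ge0$ and gives $\psi_z(0)=0$. Consequently the set of maximizers defining $\psi_z(0)$ is $\{0\}\times\underline{\partial}f(x^*)$, and the theorem on the subdifferential of a supremum of affine functions (\cite[Thm.~4.2.3]{IoffeTihomirov}, already invoked above) yields $\partial\psi_z(0)=\underline{\partial}f(x^*)+w$, so that
\begin{equation*}
  \psi_z'(0,h)=\max_{v\in\underline{\partial}f(x^*)}\langle v,h\rangle+\langle w,h\rangle
  \qquad\forall h\in\mathcal{H}.
\end{equation*}

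With this identity in hand the remaining equivalences are routine convex analysis. Because $\varphi(\cdot,z,x^*,\nu)$ is strongly convex (the term $\tfrac12\|\cdot\|^2$) and $A-x^*$ is closed, convex and contains $0$, its minimizer over $A-x^*$ exists and is unique; hence \eqref{OptCond_ConvexSet} is equivalent to ``$0$ minimizes $\varphi(\cdot,z,x^*,\nu)$ over $A-x^*$ for every $z=(0,w)\in\extr\overline{d}f(x^*)$''. For a convex function and a convex set containing $0$, the point $0$ is a minimizer iff the directional derivative is nonnegative on the set; since the quadratic term contributes nothing to the directional derivative at $0$, this holds iff $\psi_z'(0,h)\ge 0$ for all $h\in A-x^*$, i.e. iff $0$ minimizes $\psi_z$ over $A-x^*$ --- which is precisely item~1. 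Finally, invoking Corollary~\ref{Corollary_OptimalityCond} (so that $z=(0,w)\in\extr\overline{d}f(x^*)$ corresponds to $w\in\extr\overline{\partial}f(x^*)$), item~1 reads $\max_{v\in\underline{\partial}f(x^*)}\langle v,h\rangle+\langle w,h\rangle\ge 0$ for all $h\in A-x^*$ and all $w\in\extr\overline{\partial}f(x^*)$. Since $\overline{\partial}f(x^*)$ is weakly compact and convex, the minimum $\min_{w\in\overline{\partial}f(x^*)}\langle w,h\rangle$ is attained at an extreme point (Bauer's minimum principle), and therefore this family of inequalities is equivalent to $f'(x^*,h)=\max_{v\in\underline{\partial}f(x^*)}\langle v,h\rangle+\min_{w\in\overline{\partial}f(x^*)}\langle w,h\rangle\ge 0$ for all $h\in A-x^*$, i.e. to $(\ast)$.

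I expect the main obstacle to be the top-face identity, and in particular the verification that it is \emph{insensitive} to $\nu$: this is exactly what makes the optimality condition independent of $\nu$, and it hinges on the observation that the relevant extreme points carry the value $a=0\ge-\nu$ for every $\nu\ge 0$, together with the Krein--Milman sandwich. The remaining ingredients --- the directional-derivative characterization of minimizers of convex functions over convex sets, and the passage between the extreme-point formulation and the $\min$ appearing in $f'(x^*,\cdot)$ --- are standard once this identity is available.
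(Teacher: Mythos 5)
Your proof is correct, and it takes a genuinely different route from the paper's at the one place where the arguments diverge. The paper proves local minimality $\Rightarrow$ item~\ref{EquivOptCond_ConvexSet} $\Rightarrow$ \eqref{OptCond_ConvexSet}, where the first implication is not argued from first principles but obtained by citing an external result (\cite{Dolgopolik_CalcVar}, Theorem~2.8); the equivalence with item~2 is then derived, essentially as you do, by computing $\partial_h \varphi(0, z, x^*, \nu) = \underline{\partial} f(x^*) + w$ via the Ioffe--Tikhomirov supremum theorem, invoking the standard optimality criterion for a convex function on a convex set, and using Krein--Milman to pass between extreme points $w$ and the minimum over $\overline{\partial} f(x^*)$. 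You instead anchor everything on item~2, whose derivation from local minimality is the elementary one-line argument (convexity of $A$ plus directional differentiability of codifferentiable functions), and then recover \eqref{OptCond_ConvexSet} and item~\ref{EquivOptCond_ConvexSet} through the chain of equivalences. The technical core is shared: your ``top-face identity'' is exactly what the paper compresses into the phrase ``the definition of $\underline{d}_{\nu} f(x)$ and the first equality in \eqref{BothApproxZeroAtZero}''. Your route buys self-containedness (no external theorem) and an explicit, Krein--Milman--based explanation of why the face $\{a = 0\}$ of $\cl \co \underline{d}_{\nu} f(x^*)$ --- and hence the optimality condition --- is insensitive to $\nu$ and to the choice of codifferential. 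The paper's route buys brevity, plus the slicker observation that $\varphi(\cdot, z, x^*, \nu)$ and $\varphi(\cdot, z, x^*, \nu) - \| \cdot \|^2 / 2$ have the same subdifferential at the origin, which gives the equivalence of \eqref{OptCond_ConvexSet} with item~\ref{EquivOptCond_ConvexSet} in one stroke; your combination of strong convexity (for uniqueness of the minimizer in \eqref{OptCond_ConvexSet}) and directional derivatives is an equally valid substitute.
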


\begin{proof}
Let $z = (0, w) \in \extr \overline{d} f(x^*)$ be arbitrary. Applying \cite{Dolgopolik_CalcVar}, Theorem~2.8 (see also
\cite{Dolgopolik_NonhomConvApprox}, Theorem~5) one obtains that $0$ is a point of global minimum of the function
$\varphi(\cdot, z, x^*, \nu) - \| \cdot \|^2 / 2$ on the set $A - x^*$. Hence taking into account the fact that 
the subdifferential of this convex function at the origin coincides with the subdifferential of the function
$\varphi(\cdot, z, x^*, \nu)$ at the origin one obtains that \eqref{OptCond_ConvexSet} holds true. Furthermore, from
the coincidence of the subdifferentials at the origin it follows that \eqref{OptCond_ConvexSet} holds true iff
condition~\ref{EquivOptCond_ConvexSet} holds true.

Applying the theorem about the subdifferential of the supremum of a family of convex functions (see, e.g.,
\cite[Thm.~4.2.3]{IoffeTihomirov}), the definition of $\underline{d}_{\nu} f(x)$ and the first equality in
\eqref{BothApproxZeroAtZero} one obtains that $\partial_h \varphi(0, z, x^*, \nu) = \underline{\partial} f(x^*) + w$, 
where $\partial_h \varphi(0, z, x^*, \nu)$ is the subdifferential (in the sense of convex analysis) of the function
$\varphi(\cdot, z, x^*, \nu)$ at the origin. Hence with the use of the standard necessary and sufficient condition
for a minimum of a convex function on a convex set one obtains that
$$
  \max_{v \in \underline{\partial} f(x^*) + w} \langle v, h \rangle \ge 0
  \quad \forall h \in A - x^* \quad \forall (0, w) \in \extr \overline{d} f(x^*).
$$
Taking the infimum over all $w \in \extr \overline{\partial} f(x^*)$ (clearly,
$\extr \overline{\partial} f(x^*) = \{ w \in \mathcal{H} \mid (0, w) \in \extr \overline{d} f(x^*) \}$), and applying
the Krein-Milman theorem one finds that 
\begin{equation} \label{OptCond_ConvexSet_DirectDerivative}
  f'(x^*, h) = \max_{v \in \underline{\partial} f(x^*)} \langle v, h \rangle + 
  \min_{w \in \overline{\partial} f(x^*)} \langle w, h \rangle \ge 0 \quad \forall h \in A - x^*.
\end{equation}
Arguing backwards one can check that if \eqref{OptCond_ConvexSet_DirectDerivative} is valid, then optimality condition
\eqref{OptCond_ConvexSet} holds true as well. 	 
\end{proof}

A point $x^* \in A$ satisfying optimality condition
$$
  \{ 0 \} = \argmin_{h \in A - x^*} \varphi (h, z, x^*, \nu)
  \qquad \forall z = (0, w) \in \extr \overline{d} f(x^*).
$$
for some $\nu \ge 0$ is called an inf-stationary point of the function $f$ on the set $A$. With the use of this
optimality condition, which is independent of the choice of a codifferential and $\nu \ge 0$, we can design the
quadratic regularization of the MCD (QR-MCD). The scheme of this method is as follows.

\begin{enumerate}
\item{Choose sequences $\{ \nu_n \}, \{ \mu_n \} \subset [0, + \infty]$, the upper bound $\alpha_* \in (0, + \infty]$
on the step size, and an initial point $x_0 \in A$.
}
\item{$n$th iteration ($n \ge 0$).
  \begin{enumerate}
    \item{Compute $\underline{d}_{\nu_n} f(x_n)$ and $\overline{d}_{\mu_n} f(x_n)$.
    }
    \item{For any $z = (b, w) \in \overline{d}_{\mu_n} f(x_n)$ compute
    $$
      \{ h_n(z) \} = \argmin\Big\{ \varphi_n(h, z) \Bigm| h \in A - x_n  \Big\}
    $$
    }
    \item{For any $z \in \overline{d}_{\mu_n} f(x_n)$ compute
    $$
      \alpha_n(z) \in \argmin\Big\{ f(x_n + \alpha h_n(z)) \Bigm| 
      \alpha \in [0, \alpha_*] \colon x_n + \alpha h_n(z) \in A \Big\}.
    $$
    }
    \item{Compute
    $$
      z_n \in \argmin\Big\{ f(x_n + \alpha_n(z) h_n(z)) \Bigm| z \in \overline{d}_{\mu_n} f(x_n) \Big\},
    $$
    and define $x_{n + 1} = x_n + \alpha_n(z_n) h_n(z_n)$.
    }
  \end{enumerate}
}
\end{enumerate}
Hereinafter, we suppose that the step sizes $\alpha_n(z)$, and the vectors $z_n$ are correctly defined.

\begin{remark}
Observe that the function $\varphi_n(\cdot, z)$ is strictly convex, continuous (since it is obviously bounded on bounded
sets), and $\varphi_n(h, z) \to + \infty$ as $\| h \| \to + \infty$. Therefore taking into account the facts that
$\mathcal{H}$ is a Hilbert space, and the convex set $A$ is closed, one obtains that the search directions $h_n(z)$ are
well-defined. Furthermore, note that $x_n$ is an inf-stationary point of the function $f$ on the set $A$ iff
$h_n(z) = 0$ for all $z = (0, w) \in \overline{d}_{\mu_n} f(x_n)$.
\end{remark}

Let us first extend Lemma~\ref{Lemma_DescentDirections} to the the case of the QR-MCD.

\begin{lemma}
Let a sequence $\{ x_n \}$ be generated by the QR-MCD. Then for any $n \in \mathbb{N}$ and 
$z = (0, w) \in \overline{d}_{\mu_n} f(x_n)$ such that $h_n(z) \ne 0$ one has $f'(x_n, h_n(z)) \le - \| h_n(z) \|^2$. In
particular, if $x_n$ is not an inf-stationary point of the function $f$ on the set $A$, then $f(x_{n + 1}) < f(x_n)$.
\end{lemma}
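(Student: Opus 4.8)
The plan is to adapt the argument of Lemma~\ref{Lemma_DescentDirections} to the quadratically regularized, constrained setting, writing $\varphi_n(h, z) = \Phi_n(h) + \frac{1}{2}\|h\|^2$, where $\Phi_n(h) = \max_{(a,v) \in \cl\co\underline{d}_{\nu_n} f(x_n) + z}(a + \langle v, h\rangle)$ is continuous and convex. First I would record the normalization $\Phi_n(0) = 0$: since $z = (0, w)$, adding $z$ leaves the first coordinate untouched, so $\Phi_n(0) = \max_{(a,v) \in \cl\co\underline{d}_{\nu_n} f(x_n)} a$; this is at most $0$ because $\underline{d}_{\nu_n} f(x_n) \subseteq \underline{d} f(x_n)$ and the first equality in \eqref{BothApproxZeroAtZero} holds, and it is at least $0$ because $\underline{\partial} f(x_n)$ is nonempty and, exactly as in the proof of Lemma~\ref{Lemma_DescentDirections}, $\{0\} \times \underline{\partial} f(x_n) \subseteq \cl\co\underline{d}_{\nu_n} f(x_n)$.

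Next I would exploit that $h_n(z)$ is the minimizer of the convex function $\varphi_n(\cdot, z)$ over the convex set $A - x_n$, which contains the origin since $x_n \in A$. Hence the feasible direction $-h_n(z)$ gives the first-order condition $\varphi_n'(h_n(z); -h_n(z)) \ge 0$, and splitting off the smooth quadratic term (whose directional derivative at $h_n(z)$ in the direction $-h_n(z)$ is $-\|h_n(z)\|^2$) yields $\Phi_n'(h_n(z); -h_n(z)) \ge \|h_n(z)\|^2$. Combining this with the subgradient inequality for the convex function $\Phi_n$, namely $\Phi_n(0) \ge \Phi_n(h_n(z)) + \Phi_n'(h_n(z); -h_n(z))$, and with $\Phi_n(0) = 0$, I obtain $\Phi_n(h_n(z)) \le -\|h_n(z)\|^2$.

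It then remains to dominate the directional derivative by $\Phi_n$. Since $z = (0, w) \in \overline{d}_{\mu_n} f(x_n) \subseteq \overline{d} f(x_n)$, one has $w \in \overline{\partial} f(x_n)$, so $\min_{w' \in \overline{\partial} f(x_n)} \langle w', h\rangle \le \langle w, h\rangle$; using again $\{0\} \times \underline{\partial} f(x_n) \subseteq \cl\co\underline{d}_{\nu_n} f(x_n)$, every $(0, v + w)$ with $v \in \underline{\partial} f(x_n)$ lies in $\cl\co\underline{d}_{\nu_n} f(x_n) + z$, whence $f'(x_n, h) \le \max_{v \in \underline{\partial} f(x_n)} \langle v + w, h\rangle \le \Phi_n(h)$ for every $h$. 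Taking $h = h_n(z)$ and chaining the two estimates gives $f'(x_n, h_n(z)) \le \Phi_n(h_n(z)) \le -\|h_n(z)\|^2$, as claimed. The main obstacle I anticipate is obtaining the full coefficient $\|h_n(z)\|^2$ rather than the weaker $\frac{1}{2}\|h_n(z)\|^2$ that the crude comparison $\varphi_n(h_n(z), z) \le \varphi_n(0, z) = 0$ would produce; this is precisely why the first-order optimality condition in the direction $-h_n(z)$, rather than a mere comparison of values, must be used.

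For the final assertion I would invoke the characterization (stated after the description of the QR-MCD) that $x_n$ is inf-stationary on $A$ iff $h_n(z) = 0$ for all $z = (0, w) \in \overline{d}_{\mu_n} f(x_n)$. Thus if $x_n$ is not inf-stationary there is such a $z$ with $h_n(z) \ne 0$, and then $f'(x_n, h_n(z)) \le -\|h_n(z)\|^2 < 0$, so $h_n(z)$ is a genuine descent direction; feasibility of $x_n + \alpha h_n(z)$ for small $\alpha \ge 0$ follows from convexity of $A$, so the inner line search gives $f(x_n + \alpha_n(z) h_n(z)) < f(x_n)$, and the outer minimization over $\overline{d}_{\mu_n} f(x_n)$ then yields $f(x_{n+1}) \le f(x_n + \alpha_n(z) h_n(z)) < f(x_n)$.
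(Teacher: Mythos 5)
Your proof is correct, and it takes a genuinely different---and in fact sharper---route than the paper's own argument. The paper's proof is a pure value comparison: since $\varphi_n(0, z) = 0$ by \eqref{BothApproxZeroAtZero} and $0 \in A - x_n$, the minimizer satisfies $\varphi_n(h_n(z), z) \le 0$, i.e. $\max_{(a, v) \in \cl\co\underline{d}_{\nu_n} f(x_n) + z} \big( a + \langle v, h_n(z) \rangle \big) \le - \tfrac{1}{2} \| h_n(z) \|^2$, and then it bounds $f'(x_n, h_n(z))$ by this maximum exactly as you do in your last step. This is precisely the ``crude comparison'' you flagged: it yields only $f'(x_n, h_n(z)) \le - \tfrac{1}{2} \| h_n(z) \|^2$, so the paper's proof, as written, establishes a weaker constant than the one claimed in the statement of the lemma. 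Your argument---invoking the first-order optimality condition for the convex problem $\min_{h \in A - x_n} \varphi_n(h, z)$ in the feasible direction $-h_n(z)$ (legitimate since $0 \in A - x_n$ and $A - x_n$ is convex), splitting off the derivative $-\| h_n(z) \|^2$ of the quadratic term, and combining this with the convexity estimate $\Phi_n'(h_n(z); -h_n(z)) \le \Phi_n(0) - \Phi_n(h_n(z))$---recovers the full coefficient and thus proves the inequality exactly as stated. It is also the natural analogue of the paper's own proof of Lemma~\ref{Lemma_DescentDirections} for the MCD, which likewise uses the necessary condition for a minimum of a convex function on a convex set (rather than a value comparison) and obtains the full squared norm there. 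Your handling of the final assertion is also more complete than the paper's: you justify feasibility of $x_n + \alpha h_n(z)$ for $\alpha \in [0, 1]$ via convexity of $A$ and then chain the inner line search with the outer minimization over $z$, whereas the paper leaves this step implicit. For the subsequent use of the lemma (only strict descent and negativity of the directional derivative matter) either constant suffices, but your version is the one that matches the lemma as stated.
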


\begin{proof}
Fix an arbitrary $z = (0, w) \in \overline{d}_{\mu_n} f(x_n)$ such that $h_n(z) \ne 0$. Equalities
\eqref{BothApproxZeroAtZero} imply that $\varphi_n(0, z) = 0$. Therefore $\varphi_n(h_n(z), z) \le 0$ or, equivalently,
$$
  \max_{(a, v) \in \cl\co\underline{d}_{\nu_n} f(x_n) + z} \big( a + \langle v, h_n(z) \rangle \big) \le
  - \frac{1}{2} \| h_n(z) \|^2.
$$
Recall that by definition 
$\{ 0 \} \times \extr \underline{\partial} f(x_n) \subseteq \underline{d}_{\nu_n} f(x_n)$. Consequently, applying the
inequality above and the definition of quasidifferential one obtains that
$$
  f'(x_n, h_n(z)) \le \max_{v \in \underline{\partial} f(x_n) + w} \langle v, h_n(z) \rangle
  \le - \frac{1}{2} \| h_n(z) \|^2,
$$
which completes the proof.	 
\end{proof}

Now we can prove the global convergence of the QR-MCD.

\begin{theorem} \label{Th_QuadReg_GlobalConvergence}
Suppose that $x^*$ is a cluster point of a sequence $\{ x_n \} \subset A$ generated by the QR-MCD, the codifferential
mapping $D f$ is continuous at $x^*$, and uniformly approximates the function $f$ in a neighbourhood of this point. Let
also $f$ be bounded below on $A$, $\liminf_{n \to \infty} \nu_n > 0$ and $\liminf_{n \to \infty} \mu_n > 0$. Suppose
finally that one of the two following assumptions holds true:
\begin{enumerate}
\item{$\mathcal{H}$ is finite dimensional;
}
\item{$\{ (b, w) \in \overline{d} f(x_n) \mid b \le \widehat{\mu} \} \subseteq \overline{d}_{\mu_n} f(x_n)$ for some
$\widehat{\mu} > 0$ and for all sufficiently large $n \in \mathbb{N}$.
}
\end{enumerate}
Then $x^*$ is an inf-stationary point of the function $f$ on the set $A$. If, in addition, $f$ is convex, then $x^*$ is
a point of global minimum of $f$ on $A$.
\end{theorem}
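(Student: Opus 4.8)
The plan is to argue by \emph{reductio ad absurdum} along exactly the same lines as the proof of Theorem~\ref{Th_GlobalConvergence}, the only genuinely new point being the passage from the MCD search directions $-v_n(z)$ to the regularized directions $h_n(z)$. Suppose $x^*$ is not an inf-stationary point of $f$ on $A$. By Proposition~\ref{Prp_OptCond_ConvexSet} there is an extreme point $z^* = (0, w^*) \in \extr \overline{d} f(x^*)$ for which $0$ does \emph{not} minimize $\varphi(\cdot, z^*, x^*, \nu^*)$ over $A - x^*$, where $\nu^* = \liminf_{n} \nu_n > 0$; since this function is strictly convex and vanishes at the origin by \eqref{BothApproxZeroAtZero}, its minimum over $A - x^*$ is strictly negative. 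As in Theorem~\ref{Th_GlobalConvergence}, I would extract a subsequence $x_{n_k} \to x^*$ and, using the continuity of $\overline{d} f$ at $x^*$ together with Lemma~\ref{Lemma_LimitOfExtremePoints} and $\liminf_n \mu_n > 0$ in the finite dimensional case, or the second assumption of the theorem in the infinite dimensional case, select $z_{n_k} = (b_{n_k}, w_{n_k}) \in \overline{d}_{\mu_{n_k}} f(x_{n_k})$ with $z_{n_k} \to z^*$, so that $b_{n_k} \to 0$. The objective is then to verify hypotheses \eqref{DescentCond1}--\eqref{DescentCond2} of Lemma~\ref{Lemma_GlobalDivergence} for the bounded sequence $h_n := h_{n_k}(z_{n_k})$ and conclude $f(x_{n_k}) \to -\infty$, contradicting boundedness below.

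The main obstacle is that the truncated hypodifferential $\underline{d}_{\nu_n} f$ need not depend continuously on $n$, so one cannot simply pass to the limit in the value $\min_{h} \varphi_{n_k}(h, z_{n_k})$. I would circumvent this by a monotonicity in the hypodifferential. Writing $\widehat{\varphi}(h, z, x) = \max_{(a,v) \in \underline{d} f(x) + z}(a + \langle v, h \rangle) + \frac{1}{2} \| h \|^2$ for the analogue of \eqref{QuadReg_Def} built from the \emph{full} hypodifferential, the inclusion $\cl\co \underline{d}_{\nu} f(x) \subseteq \underline{d} f(x)$ gives $\varphi(\cdot, z, x, \nu) \le \widehat{\varphi}(\cdot, z, x)$ pointwise, and hence $\min_{h \in A - x} \varphi(h, z, x, \nu) \le \min_{h \in A - x} \widehat{\varphi}(h, z, x)$. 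Two facts about $\widehat{\varphi}$ are needed. First, the subdifferential in $h$ at the origin of $\widehat{\varphi}(\cdot, z^*, x^*)$ equals $\underline{\partial} f(x^*) + w^*$, which is exactly the subdifferential computed for $\varphi$ in the proof of Proposition~\ref{Prp_OptCond_ConvexSet}; since the two convex functions then have the same directional derivatives at $0$, the failure of $0$ to minimize $\varphi(\cdot, z^*, x^*, \nu^*)$ forces $0$ to fail to minimize $\widehat{\varphi}(\cdot, z^*, x^*)$ as well, so $\min_{h \in A - x^*} \widehat{\varphi}(h, z^*, x^*) =: -2\theta < 0$. Second, $\min_{h \in A - x} \widehat{\varphi}(h, z, x)$ is upper semicontinuous at $(z^*, x^*)$: if $h^*$ is the minimizer at $(z^*, x^*)$, then the feasible points $h_{n_k} := (x^* + h^*) - x_{n_k} \in A - x_{n_k}$ satisfy $\widehat{\varphi}(h_{n_k}, z_{n_k}, x_{n_k}) \to \widehat{\varphi}(h^*, z^*, x^*) = -2\theta$ by the Hausdorff continuity of $\underline{d} f(\cdot)$ and $z_{n_k} \to z^*$.

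Combining these, for all large $k$ one has $\varphi_{n_k}(h_{n_k}(z_{n_k}), z_{n_k}) = \min_{h \in A - x_{n_k}} \varphi_{n_k}(h, z_{n_k}) \le \min_{h} \widehat{\varphi}(h, z_{n_k}, x_{n_k}) \le -\theta$. Recalling \eqref{QuadReg_Def}, this yields $\sup_{(a,v) \in \underline{d}_{\nu_{n_k}} f(x_{n_k}) + z_{n_k}} (a + \langle v, h_{n_k}(z_{n_k}) \rangle) \le -\theta - \frac{1}{2} \| h_{n_k}(z_{n_k}) \|^2 \le -\theta$, which is precisely \eqref{DescentCond1}; the bound $\varphi_{n_k}(\cdot, z_{n_k}) \le 0$ together with the boundedness of the codifferentials also forces $\{ h_{n_k}(z_{n_k}) \}$ to be bounded. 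Inequality \eqref{DescentCond2} holds with $\varepsilon_n \equiv 0$ by the line-search and selection rules of the QR-MCD and the monotonicity $f(x_{n+1}) \le f(x_n)$, using the convexity of $A$ to guarantee that $x_{n_k} + \alpha h_{n_k}(z_{n_k}) \in A$ for all small $\alpha \ge 0$, exactly as in Theorem~\ref{Th_GlobalConvergence}. Lemma~\ref{Lemma_GlobalDivergence} then gives $f(x_{n_k}) \to -\infty$, the desired contradiction, so $x^*$ is inf-stationary on $A$. Finally, if $f$ is convex, then an inf-stationary $x^*$ satisfies $f'(x^*, h) \ge 0$ for all $h \in A - x^*$ by Proposition~\ref{Prp_OptCond_ConvexSet}, whence $f(y) \ge f(x^*) + f'(x^*, y - x^*) \ge f(x^*)$ for every $y \in A$, i.e. $x^*$ is a point of global minimum of $f$ on $A$.
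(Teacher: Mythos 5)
Your proposal is correct and takes essentially the same route as the paper's own proof: argue by contradiction, extract a subsequence with $z_{n_k} \to z^*$ via Lemma~\ref{Lemma_LimitOfExtremePoints} (or the second assumption), use the monotonicity $\varphi_{n}(\cdot, z) \le \varphi(\cdot, z, x_{n}, + \infty)$ coming from $\cl\co\underline{d}_{\nu} f(x) \subseteq \underline{d} f(x)$ together with the Hausdorff continuity of $D f$ at $x^*$ to force $\varphi_{n_k}(h_{n_k}(z_{n_k}), z_{n_k})$ below a negative threshold, and then apply Lemma~\ref{Lemma_GlobalDivergence} to contradict boundedness below. Your two refinements --- deducing $\min_{h \in A - x^*} \widehat{\varphi}(h, z^*, x^*) < 0$ from the coincidence of subdifferentials at the origin instead of directly invoking the $\nu$-independence of Proposition~\ref{Prp_OptCond_ConvexSet}, and replacing $h^*$ by the feasible points $(x^* + h^*) - x_{n_k} \in A - x_{n_k}$ --- merely make explicit two steps the paper's proof leaves implicit, the second of which is a genuine (if minor) gain in rigour.
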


\begin{proof}
Arguing by reductio ad absurdum, suppose that $x^*$ is not an inf-stationary point of the function $f$ on the set $A$.
Then there exists $z^* = (0, w^*) \in \extr \overline{d} f(x^*)$ such that 
$\min_{h \in A - x^*} \varphi(h, z^*, x^*, + \infty) = - \theta < 0$. Denote by $h^*$ a point of global minimum of the
function $\varphi(\cdot, z^*, x^*, + \infty)$ on the set $A - x^*$. Our aim is to apply
Lemma~\ref{Lemma_GlobalDivergence}.

Arguing in the same way as in the proof of Theorem~\ref{Th_GlobalConvergence} one can check that there exist a
subsequence $\{ x_{n_k} \}$ and a sequence $z_{n_k} = (b_{n_k}, w_{n_k}) \in \overline{d}_{\mu_{n_k}} f(x_{n_k})$ such
that $x_{n_k} \to x^*$ and $z_{n_k} \to z^*$ as $k \to \infty$. From the continuity of the codifferential mapping $D f$
at $x^*$ it follows that $\underline{d} f(x_{n_k}) \to \underline{d} f(x^*)$ in the Hausdorff metric. Applying this fact
it is easy to deduce that there exists $k_0 \in \mathbb{N}$ such that
$$
  \big| \varphi(h^*, z_{n_k}, x_{n_k}, + \infty) - \varphi(h^*, z^*, x^*, + \infty) \big| < \frac{\theta}{2}
  \quad \forall k \ge k_0,
$$
which implies that
$$
  \varphi_{n_k}(h^*, z_{n_k}) \le \varphi(h^*, z_{n_k}, x_{n_k}, + \infty) < - \frac{\theta}{2}
  \quad \forall k \ge k_0,
$$
and, moreover, $\varphi_{n_k}(h_{n_k}(z_{n_k}), z_{n_k}) < - \theta / 2$ for all $k \ge k_0$. Therefore
\begin{equation} \label{QuadReg_HypodiffUpperEstimate}
  \sup_{(a, v) \in \underline{d}_{\nu_{n_k}} f(x_{n_k}) + z_{n_k}} \big( a + \langle v, h_{n_k}(z_{n_k} \rangle \big) 
  < \varphi_{n_k}(h_{n_k}(z_{n_k}), z_{n_k}) < - \frac{\theta}{2}.
\end{equation}
for all $k \ge k_0$.

Clearly, the sequences $\{ \underline{d} f(x_{n_k}) \}$ and $\{ \overline{d} f(x_{n_k}) \}$ are bounded due to the
continuity of the codifferential mapping $D f$ at $x^*$. Therefore there exist $c_1, c_2 \in \mathbb{R}$ such that
$\varphi_{n_k}(h, z_{n_k}) \ge 0.5 \| h \|^2 + c_1 \| h \| + c_2$ for all $h \in \mathcal{H}$ and $k \in \mathbb{N}$,
which implies that the sequence $\{ h_{n_k}(z_{n_k}) \}$ is bounded. Hence taking into account
inequality~\eqref{QuadReg_HypodiffUpperEstimate}, and the fact that $D f$ uniformly approximates $f$ in a neighbourhood
of $x^*$, and applying Lemma~\ref{Lemma_GlobalDivergence} one obtains that $f(x_{n_k}) \to - \infty$ as $k \to \infty$,
which is impossible by virtue of the fact that $f$ is bounded below on $A$.	 
\end{proof}

Similarly to the case of the MCD, introduce the function
$$
  \omega_2(x, \nu) = \sup_{z = (0, w) \in \overline{d} f(x)} \| h(z, x, \nu) \|^2,
$$
that measures how far a point $x$ is from being an inf-stationary point of the function $f$ on the set $A$. Here 
$h(z, x, \nu)$ is a point of global minimum of the function $\varphi(\cdot, z, x, \nu)$ on the set $A - x$. It
is easy to see that $x^*$ is an inf-stationary point of the function $f$ on the set $A$ iff $\omega_2(x^*, \nu) = 0$ for
some $\nu \ge 0$.

Arguing in a similar way to the proof of Theorem~\ref{Th_InfStatMeasure_Convergence} one can easily verify that the
following result holds true.

\begin{theorem} \label{Th_QuadReg_InfStatMeasure_Convergence}
Let $f$ be bounded below on $A$, a sequence $\{ x_n \} \subset A$ be generated by the QR-MCD, and the sequences
$\{ \underline{d} f(x_n) \}$ and $\{ \overline{d} f(x_n) \}$ be bounded. Suppose also that there exists
$r > 0$ such that $B(x_n, r) \subset U$ for all $n \in \mathbb{N}$, the codifferentiable mapping $D f$ uniformly
approximates the function $f$ on the set $\{ x_n \}_{n \ge m}$ for some $m \in \mathbb{N}$, and 
$\liminf_{n \to \infty} \nu_n > 0$. Then $\omega_2(x_n, \nu_n) \to  0$ as $n \to \infty$. 
\end{theorem}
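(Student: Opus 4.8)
The plan is to follow the reductio ad absurdum scheme of the proof of Theorem~\ref{Th_InfStatMeasure_Convergence}, with the projection directions $-v_n(z)$ replaced by the quadratic-regularization directions $h_n(z)$, and to derive a contradiction with the boundedness of $f$ below by forcing $f(x_{n_k}) \to - \infty$ along a subsequence via Lemma~\ref{Lemma_GlobalDivergence}. First I would assume the claim fails, so that there exist $\theta > 0$ and a subsequence $\{ x_{n_k} \}$ with $\omega_2(x_{n_k}, \nu_{n_k}) > \theta$ for all $k$. By the definition of $\omega_2$ (and the extreme-point reduction discussed below) one extracts $z_{n_k} = (0, w_{n_k}) \in \overline{d}_{\mu_{n_k}} f(x_{n_k})$ whose associated QR direction $h_{n_k}(z_{n_k})$, the global minimizer of $\varphi_{n_k}(\cdot, z_{n_k})$ over $A - x_{n_k}$, satisfies $\| h_{n_k}(z_{n_k}) \|^2 > \theta$; note that the first coordinate of $z_{n_k}$ vanishes, so the requirement $b_{n_k} \to 0$ in Lemma~\ref{Lemma_GlobalDivergence} is automatic.

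The step specific to the QR-MCD, and the cleanest part of the argument, is the passage from the lower bound on $\| h_{n_k}(z_{n_k}) \|$ to the descent inequality \eqref{DescentCond1}. Since $x_{n_k} \in A$ we have $0 \in A - x_{n_k}$, and equalities \eqref{BothApproxZeroAtZero} give $\varphi_{n_k}(0, z_{n_k}) = 0$; as $h_{n_k}(z_{n_k})$ globally minimizes $\varphi_{n_k}(\cdot, z_{n_k})$ over $A - x_{n_k}$, we obtain $\varphi_{n_k}(h_{n_k}(z_{n_k}), z_{n_k}) \le 0$, that is,
$$
  \sup_{(a, v) \in \underline{d}_{\nu_{n_k}} f(x_{n_k}) + z_{n_k}} \big( a + \langle v, h_{n_k}(z_{n_k}) \rangle \big)
  \le - \frac{1}{2} \big\| h_{n_k}(z_{n_k}) \big\|^2 < - \frac{\theta}{2},
$$
where I have used that the supremum of an affine functional over a set coincides with its supremum over the closed convex hull of that set. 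This is precisely \eqref{DescentCond1} with $h_{n_k}(z_{n_k})$ in the role of $h_n$ and $\theta/2$ in the role of $\theta$.

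It then remains to check the remaining hypotheses of Lemma~\ref{Lemma_GlobalDivergence}. Boundedness of $\{ h_{n_k}(z_{n_k}) \}$ follows from the coercivity contributed by the quadratic term: the boundedness of $\{ \underline{d} f(x_n) \}$ and $\{ \overline{d} f(x_n) \}$ yields constants $c_1, c_2$ with $\varphi_{n_k}(h, z_{n_k}) \ge \frac{1}{2} \| h \|^2 + c_1 \| h \| + c_2$ for all $h$, so $\varphi_{n_k}(h_{n_k}(z_{n_k}), z_{n_k}) \le 0$ keeps $\| h_{n_k}(z_{n_k}) \|$ bounded. The descent condition \eqref{DescentCond2} is inherited from the algorithm: convexity of $A$ and $x_{n_k} \in A$ give $x_{n_k} + \alpha h_{n_k}(z_{n_k}) \in A$ for $\alpha \in [0, 1]$, while the QR-MCD sets $x_{n_k + 1}$ to minimize $f(x_{n_k} + \alpha_{n_k}(z) h_{n_k}(z))$ over $z \in \overline{d}_{\mu_{n_k}} f(x_{n_k})$, so \eqref{DescentCond2} holds with $\alpha_0 = \min\{ 1, \alpha_* \}$ and $\varepsilon_n \equiv 0$. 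The hypotheses $B(x_n, r) \subset U$, uniform approximation of $f$ by $D f$ on $\{ x_n \}_{n \ge m}$, boundedness of the codifferentials, and $\liminf_{n \to \infty} \nu_n > 0$ are assumed outright. Lemma~\ref{Lemma_GlobalDivergence}, applied to $\{ x_{n_k} \}$ and $\{ h_{n_k}(z_{n_k}) \}$, then gives $f(x_{n_k}) \to - \infty$, contradicting the boundedness of $f$ below on $A$.

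The point I expect to require the most care is the very first extraction: justifying that the near-maximizer $z_{n_k}$ in the definition of $\omega_2$, which a priori only ranges over the full hyperdifferential $\overline{d} f(x_{n_k})$, may be taken inside the truncated hyperdifferential $\overline{d}_{\mu_{n_k}} f(x_{n_k})$, since only then does the algorithm's descent furnish \eqref{DescentCond2}. The natural remedy mirrors the implicit reduction in Theorem~\ref{Th_InfStatMeasure_Convergence}: the minimal value $V(w) = \min_{h \in A - x} \varphi(h, (0, w), x, \nu)$ is concave in $w$, so $- V(w)$ is convex and attains its supremum over the weakly compact convex set $\overline{\partial} f(x)$ at an extreme point $w$ with $(0, w) \in \extr \overline{d} f(x)$; such a point has vanishing first coordinate and hence lies in $\overline{d}_{\mu_{n_k}} f(x_{n_k})$ by \eqref{ReducedCodifferentialDef}. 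One finally checks that $- V(w)$ and $\| h((0, w), x, \nu) \|^2$ vanish together (indeed $\frac{1}{2} \| h \|^2 \le - V(w)$, with a matching upper bound on bounded sets), so that controlling $\sup_w ( - V(w) )$ controls $\omega_2$.
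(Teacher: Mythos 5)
Your proposal is correct and follows essentially the same route as the paper: the paper proves this theorem by simply repeating the reductio-ad-absurdum scheme of Theorem~\ref{Th_InfStatMeasure_Convergence} (extract a subsequence with $\omega_2(x_{n_k},\nu_{n_k})>\theta$, pass to $z_{n_k}=(0,w_{n_k})\in\overline{d}_{\mu_{n_k}}f(x_{n_k})$, verify \eqref{DescentCond1}--\eqref{DescentCond2} with $\varepsilon_n\equiv 0$ using the monotonicity of the method and convexity of $A$, and invoke Lemma~\ref{Lemma_GlobalDivergence}), and the QR-specific inequality you derive from $\varphi_{n_k}(h_{n_k}(z_{n_k}),z_{n_k})\le\varphi_{n_k}(0,z_{n_k})=0$ is exactly the analogue of \eqref{QuadReg_HypodiffUpperEstimate} in the proof of Theorem~\ref{Th_QuadReg_GlobalConvergence}. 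If anything, you are more explicit than the paper, which silently performs the extreme-point (Bauer-type) reduction placing the near-maximizing $(0,w_{n_k})$ inside the truncated hyperdifferential.
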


\begin{remark}
{(i)~As in the case of the MCD, one can verify that Theorems~\ref{Th_QuadReg_GlobalConvergence} and
\ref{Th_QuadReg_InfStatMeasure_Convergence} remain to hold true if instead of the search directions $h_n(z)$ one uses
their approximations $\widetilde{h}_n(z)$ such that $\| \widetilde{h}_n(z) - h_n(z) \| < \varepsilon_n$, where
$\varepsilon_n \to +0$ as $n \to \infty$. In order to prove this result one needs to note that if the sequences 
$\{ \underline{d} f(x_n) \}$ and $\{ \overline{d} f(x_n) \}$ are bounded, then the corresponding functions
$\varphi_n(h, z)$ are uniformly (in $n$) bounded on any bounded set, which, as it is well-known from convex analysis,
implies that these functions are Lipschitz continuous on any bounded set with a Lipschitz constant independent of $n$.
Also, one can verify that Theorems~\ref{Th_QuadReg_GlobalConvergence} and \ref{Th_QuadReg_InfStatMeasure_Convergence}
remain valid, if one uses the step size rule similar to step size rule \eqref{ArmijoStepSize} 
for any $z = (0, w) \in \overline{d}_{\mu_n} f(x_n)$.
}

\noindent{(ii)~For any $x \in U$ and $z \in \overline{d} f(x)$ introduce the function
$$
  \psi(h, z, x, \nu) = \sup_{(a, v) \in \underline{d}_{\nu} f(x) + z} \big( a + \langle v, h \rangle \big).
$$
From Proposition~\ref{Prp_OptCond_ConvexSet} it follows that $x^*$ is an inf-stationary point of the function $f$ on
the set $A$ iff $\{ 0 \} \in \argmin_{h \in A - x^*} \psi(h, z, x, \nu)$ for all 
$z = (0, w) \in \extr \overline{d} f(x^*)$ or, equivalently, iff 
$\{ 0 \} \in \argmin_{h \in K \cap (A - x^*)} \psi(h, z, x, \nu)$ for all 
$z = (0, w) \in \extr \overline{d} f(x^*)$, where $K$ is a set containing a neighbourhood of zero. Bearing this result
in mind, one can propose a different method for minimizing a codifferentiable function on a convex set. Namely, instead
of minimizing the function $\varphi_n(h, z)$ one can compute a search direction as follows:
$$
  \{ h_n(z) \} \in \argmin\Big\{ \psi(h, z, x_n, \nu_n) \Bigm| h \in K \cap (A - x_n) \Big\}.
$$
Here $K \subset \mathcal{H}$ is a bounded closed convex set containing a neighbourhood of zero. It is natural to call
the modification of the QR-MCD utilizing these search directions \textit{the primal regularization of the MCD}. Let us
note that one can easily extend the results of this section to the case of the primal regularization of the MCD.
}
\end{remark}

\section{The rate of convergence: an example}
\label{Section_Rate_of_Convergence}

In this section we discuss the rate of convergence of the MCD. Note that if the function $f$ is smooth, then the MCD
coincides with the gradient descent. Therefore it is natural to expect that the MCD converges linearly, provided a
suitable second order sufficient optimality condition holds true at the limit point. A rigorous analysis of the rate of
convergence obviously requires the use of very cumbersome second order approximation of codifferentiable functions (such
as the so-called \textit{second order codifferentials}, see~\cite{DemRub_book}), and is complicated by the fact that in
every iteration of the method one uses multiple search directions some of which might not be descent directions.
Furthermore, the MCD (unlike the optimality conditions) is not invariant with respect to the choice of a codifferential,
and one can provide examples in which a poor choice of a codifferential mapping significantly slows down the method.
That is why we do not present a detailed analysis of the rate of convergence of the MCD here, and leave it as an open
problem for future research. Instead, we consider only a certain class of nonconvex codifferentiable functions for which
the rate of convergence can be easily estimated with the use of some existing results.

Let the function $f$ have the form
\begin{equation} \label{MaxPlusMinOfConvexFunc}
  f(x) = \sum_{k = 1}^m \max_{i \in I_k} g_{ki}(x) + \sum_{l = 1}^s \min_{j \in J_l} u_{lj}(x),
\end{equation}
where $g_{ki}, u_{lj} \colon \mathcal{H} \to \mathbb{R}$ are continuously differentiable functions, 
$I_k = \{ 1, \ldots, p_k \}$, $J_l = \{ 1, \ldots, q_l \}$. Particular examples of functions of the form
\eqref{MaxPlusMinOfConvexFunc}, including some functions arising in cluster analysis, can be found in
\cite{DemBagRub}.

Observe that $f$ is a nonconvex nonsmooth function (even if all functions $g_{ki}$ and $u_{lj}$ are convex).
Furthermore, $f$ is continuously codifferentiable, and applying Theorem~\ref{Theorem_Codiff_Max_Min} and
Corollary~\ref{Corollary_Codiff_Lin_Combin} one can define
\begin{multline*}
  D f(x) = \Bigg[ \sum_{k = 1}^m \co\big\{ (g_{ki}(x) - g_k(x), \nabla g_{ki}(x)) \bigm| i \in I_k \big\}, \\
  \sum_{l = 1}^s \co\big\{ (u_{lj}(x) - u_l(x), \nabla u_{lj}(x)) \bigm| j \in J_l \big\} \Bigg],
\end{multline*}
where $g_k(x) = \max_{i \in I_k} g_{ki}(x)$, and $u_l(x) = \min_{j \in J_l} u_{lj}(x)$. Additionally, one can easily
check that the codifferential mapping $D f(x)$ locally uniformly approximates the funciton $f$, provided the gradients
of the functions $g_{ki}$ and $u_{lj}$ are locally uniformly continuous (see~Theorem~\ref{Theorem_Codiff_Max_Min}).

Let us apply the QR-MCD to the function $f$. Fix arbitrary $\nu \ge 0$ and $\mu \ge 0$. For any $x \in \mathcal{H}$,
introduce the index sets 
\begin{gather*}
  I_{k, \nu}(x) = \{ i \in I_k \mid g_{ki}(x) \ge g_k(x) - \nu \}, \:
  J_{l, \mu}(x) = \{ j \in J_l \mid u_{lj}(x) \le u_l(x) + \mu \}, \\
  I_{\nu}(x) = I_{1, \nu}(x) \times \ldots \times I_{m, \nu}(x), \quad
  J_{\mu}(x) = I_{1, \mu}(x) \times \ldots \times J_{s, \mu}(x),
\end{gather*}
and denote $I(x) = I_0(x)$ and $J(x) = J_0(x)$. Define also
\begin{gather*}
  \underline{d}_{\nu} f(x) = 
  \sum_{k = 1}^m \big\{ (g_{ki}(x) - g_k(x), \nabla g_{ki}(x)) \bigm| i \in I_{k, \nu}(x) \big\}, \\
  \overline{d}_{\mu} f(x) = 
  \sum_{l = 1}^s \big\{ (u_{lj}(x) - u_l(x), \nabla u_{lj}(x) \bigm| j \in J_{l, \mu}(x) \big\}.
\end{gather*}
It is easy to see that inclusions \eqref{ReducedCodifferentialDef} hold true. Thus, we can minimize the function $f$
with the use of the QR-MCD with the sets $\underline{d}_{\nu} f(x)$ and $\overline{d}_{\mu} f(x)$ defined as above
(note that these sets are finite).

For any multi-index $\lambda = (j_1, \ldots, j_s) \in J_1 \times \ldots \times J_s$ introduce the function
$$
  f_{\lambda}(x) = \sum_{k = 1}^m \max_{i \in I_k} g_{ki}(x) + \sum_{l = 1}^s u_{l j_l}(x).
$$
By definition one has $f(x) = \min_{\lambda} f_{\lambda}(x)$. In essence, the QR-MCD for the function $f$ can be veiwed
as a methods that simultaneously minimizes the functions $f_{\lambda}$, $\lambda \in J_{\mu}(x)$, with the use of 
the PPP algorithm. This idea will allow us to estimate the rate of convergence of this method with the use of
some existing results on the convergence of the PPP algorithm (see \cite{Polak}, Sections~2.4.1--2.4.4).

Under the assumption that the functions $g_{ki}$ and $u_{lj}$ are strongly convex, we can prove that the QR-MCD
converges with a linear rate. In order to utilize the existing results on the convergence of the PPP algorithm, below we
suppose that $\nu = + \infty$, and the space $\mathcal{H}$ is finite dimensional. It should be noted that the following
theorem remains valid in the general case, but for the sake of shortness we do not present the proof of this result
here.

\begin{theorem} \label{Th_LinearConvergenceOfQuadRegMCD}
Let $\mathcal{H} = \mathbb{R}^d$, $\alpha_* \ge 1$, $\nu_n \equiv + \infty$, and $\lim_{n \to \infty} \mu_n > 0$. Let
also the functions $g_{ki}$ and $u_{lj}$ be twice continuously differentiable. Suppose that there exist $M > m > 0$ such
that
$$
  m \| y \|^2 \le \langle y, \nabla^2 g_{ki}(x) y \rangle \le M \| y \|^2, \quad
  m \| y \|^2 \le \langle y, \nabla^2 u_{lj}(x) y \rangle \le M \| y \|^2, \quad
$$
for any $x, y \in \mathbb{R}^d$, and for all $i \in I_k$, $k \in \{1, \ldots, m \}$ and $j \in J_l$, 
$l \in \{1, \ldots, s \}$. Suppose, finally, that a sequence $\{ x_n \}$ generated by the QR-MCD for the function $f$
converges to a point $x^*$. Then $x^*$ is a point of strict local minimum of the function $f$, and there exist 
$c \in (0, 1)$, $Q > 0$, and $n_0 \in \mathbb{N}$ such that
\begin{equation} \label{QuadRegMCD_LinearRateOfConvergence}
  \big[ f(x_{n + 1}) - f(x^*) \big] \le c \big[ f(x_n) - f(x^*) \big], \quad
  \big\| x_{n + 1} - x^* \big\| \le Q c^{n / 2},
\end{equation}
for any $n \ge n_0$, i.e. the QR-MCD converges linearly.
\end{theorem}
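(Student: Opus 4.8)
The plan is to reduce the whole statement to the known linear convergence of the PPP algorithm for strongly convex min-max problems (\cite{Polak}, Sections~2.4.1--2.4.4), exactly along the lines indicated before the theorem. First I would record the algebraic bridge. Writing $\sum_{k} \max_{i \in I_k} g_{ki} = \max_{\iota} \sum_{k} g_{k \iota_k}$ over the product index set, each $f_{\lambda}$ is the maximum of finitely many twice continuously differentiable functions that are strongly convex with modulus $m$ and have Hessians bounded by $M$; in particular every $f_{\lambda}$ is strongly convex with a unique minimizer, and $f = \min_{\lambda} f_{\lambda}$. A direct computation from \eqref{QuadReg_Def} and the formula for $D f$ (with $\nu = +\infty$) shows that for the point $z \in \overline{d}_{\mu_n} f(x_n)$ indexed by a multi-index $\lambda$ one has $\varphi_n(h, z) = \widetilde{f}_{\lambda}(x_n, h) - f(x_n) + \frac{1}{2}\|h\|^2$, where $\widetilde{f}_{\lambda}(x_n, h) = \sum_{k} \max_{i}(g_{ki}(x_n) + \langle \nabla g_{ki}(x_n), h \rangle) + \sum_{l}(u_{l j_l}(x_n) + \langle \nabla u_{l j_l}(x_n), h \rangle)$ is the first-order min-max model of $f_{\lambda}$ at $x_n$. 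Hence the QR-MCD direction $h_n(z)$ is exactly the PPP search direction for $f_{\lambda}$ at $x_n$.

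Next I would establish the variational structure at the limit. Since each $f_{\lambda}$ is strongly convex it is bounded below, and as there are finitely many indices, $f = \min_{\lambda} f_{\lambda}$ is bounded below on $A$; together with the continuity of $D f$ and its local uniform approximation of $f$ (which holds because the $g_{ki}, u_{lj}$ are $C^2$, hence have locally uniformly continuous gradients, cf. Theorem~\ref{Theorem_Codiff_Max_Min}) this lets me invoke Theorem~\ref{Th_QuadReg_GlobalConvergence} to conclude that $x^*$ is inf-stationary. By Proposition~\ref{Prp_OptCond_ConvexSet} this means that $0$ is the PPP direction for every active $f_{\lambda}$, $\lambda \in J(x^*)$, so $x^*$ minimizes each such $f_{\lambda}$ over $A$ with $f_{\lambda}(x^*) = f(x^*)$. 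Strict local minimality of $x^*$ drops out here: on a neighbourhood of $x^*$ the inactive indices are dominated, so $f = \min_{\lambda \in J(x^*)} f_{\lambda}$, and strong convexity gives $f_{\lambda}(x) \ge f(x^*) + \frac{m}{2}\|x - x^*\|^2$ for each active $\lambda$, whence $f(x) \ge f(x^*) + \frac{m}{2}\|x - x^*\|^2$.

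I would then use two elementary active-set facts valid for all large $n$: every $\lambda \in J(x^*)$ lies in $J_{\mu_n}(x_n)$ (using $\lim_{n} \mu_n > 0$ and continuity of the $u_{lj}$), and conversely any minimizing index $\hat{\lambda}_n$ with $f(x_n) = f_{\hat{\lambda}_n}(x_n)$ satisfies $\hat{\lambda}_n \in J(x^*)$ (the active sets $J_l(x_n)$ stabilize inside $J_l(x^*)$). Consequently QR-MCD actually computes the PPP direction $h_n(z_{\hat{\lambda}_n})$ for $f_{\hat{\lambda}_n}$, and since its PPP step $\alpha_0 \in (0, 1] \subseteq [0, \alpha_*]$ keeps $x_n + \alpha_0 h$ inside the convex set $A$ (this is where $\alpha_* \ge 1$ enters), the exact line search on $f$ yields $f(x_{n+1}) \le f(x_n + \alpha_0 h_n(z_{\hat{\lambda}_n})) \le f_{\hat{\lambda}_n}(x_n + \alpha_0 h_n(z_{\hat{\lambda}_n}))$. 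Applying the PPP linear decrease for $f_{\hat{\lambda}_n}$, whose minimizer is $x^*$ with value $f(x^*)$, and using $f_{\hat{\lambda}_n}(x_n) = f(x_n)$, gives $f(x_{n+1}) - f(x^*) \le c[f(x_n) - f(x^*)]$ with $c = \max_{\lambda \in J(x^*)} c_{\lambda} < 1$ taken over the finitely many active indices.

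The second estimate in \eqref{QuadRegMCD_LinearRateOfConvergence} then follows by iterating the first to obtain $f(x_n) - f(x^*) \le c^{\,n - n_0}[f(x_{n_0}) - f(x^*)]$ and combining it with the quadratic growth bound $\frac{m}{2}\|x_{n+1} - x^*\|^2 \le f(x_{n+1}) - f(x^*)$, which gives $\|x_{n+1} - x^*\| \le Q c^{n/2}$ for a suitable $Q$. The main obstacle is the mismatch between the QR-MCD line search, performed on the nonconvex $f$, and the PPP theory, which only controls the convex surrogates $f_{\lambda}$; this is bridged precisely by the sandwich $f \le f_{\hat{\lambda}_n}$ with equality at $x_n$, together with the crucial fact that all active $f_{\lambda}$ share the same minimizer $x^*$ and minimal value $f(x^*)$. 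The remaining delicacy, uniformity of the rate constant $c$ both over $\lambda$ and over a neighbourhood of $x^*$, is handled by the finiteness of $J(x^*)$ and by the PPP rate depending only on the common modulus $m$ and bound $M$.
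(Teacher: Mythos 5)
Your proposal is correct and takes essentially the same route as the paper's proof: establish inf-stationarity of $x^*$ via Theorem~\ref{Th_QuadReg_GlobalConvergence}, note that $x^*$ minimizes every active surrogate $f_{\lambda}$, $\lambda \in J(x^*)$, use the active-set stability $J(x_n) \subseteq J(x^*)$ for large $n$, identify the QR-MCD direction $h_n(z)$ for $\lambda \in J(x_n)$ with the PPP direction for $f_{\lambda}$, exploit $\alpha_* \ge 1$ and the sandwich $f \le f_{\lambda}$ (with equality at $x_n$ and value equality at $x^*$) so that the exact line search dominates the PPP Armijo step, invoke Polak's linear-rate theorem, and convert the value rate into the iterate rate via quadratic growth. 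The only cosmetic differences are that you derive the quadratic growth bound directly from strong convexity where the paper cites Polak's inequality (2.4.9h), and that you spell out explicitly both the identification of $\varphi_n(\cdot, z)$ with the PPP model and the verification of the hypotheses of Theorem~\ref{Th_QuadReg_GlobalConvergence}, which the paper leaves implicit.
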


\begin{proof}
By Theorem~\ref{Th_QuadReg_GlobalConvergence} the point $x^*$ is an inf-stationary point of the function $f$.
Therefore, as it is easy to check, $0 \in \partial f_{\lambda}(x^*)$ for any $\lambda \in J(x^*)$, where 
$\partial f_{\lambda}(x^*)$ is the subdifferential of the convex function $f_{\lambda}$ at $x^*$ in the sense of convex
analysis. Consequently, $x^*$ is a point of global minimum of $f_{\lambda}$ for all $\lambda \in J(x^*)$. Observe that
every function $f_{\lambda}$ is strictly convex as the sum of the maximum of convex functions and a strongly convex
function. Hence $x^*$ is a point of \textit{strict} global minimum of $f_{\lambda}$ for all $\lambda \in J(x^*)$, which,
as it is easily seen, implies that $x^*$ is a point of strict local minimum of the function $f$.

Let us now estimate the rate of convergence. Since the functions $u_{lj}$ are continuous, there exists a
neighbourhood $U$ of $x^*$ such that $J(x) \subseteq J(x^*)$ for all $x \in U$. Hence taking into account the fact that
$x_n \to x^*$ as $n \to \infty$, one obtains that there exists $n_0 \in \mathbb{N}$ such that $x_n \in U$ for all 
$n \ge n_0$.

Fix arbitrary $\sigma, \gamma \in (0, 1)$, $n \ge n_0$ and $\lambda = (j_1, \ldots, j_s) \in J(x_n)$. Define
$z_n = ( 0, \sum_{l = 1}^s \nabla u_{l j_l}(x_n) )$. Then $z_n \in \overline{d}_{\mu} f(x_n)$. Define also
$$
  \{ \gamma_n \} = \argmax_{k \in \mathbb{N} \cup \{ 0 \}}\Big\{ \gamma^k \Bigm|
  f_{\lambda}(x_n + \gamma^k h_n(z)) - f_{\lambda}(x_n) \le \gamma^k \sigma \varphi_n(h_n(z_n), z_n) \Big\}.
$$
Let us check that $\gamma_n$ is correctly defined. Indeed, from the definitions of $f_{\lambda}$ and 
$\varphi_n(h, z)$ (see~\eqref{QuadReg_Def}) it follows that $f_{\lambda}'(x_n, h) = (\varphi_n(\cdot, z_n))'(0, h)$ for
all $h \in \mathcal{H}$. Taking into account the convexity of the function $\varphi_n(\cdot, z_n)$ and the fact that
$\varphi_n(0, z_n) = 0$ one obtains that $(\varphi_n(\cdot, z_n))'(0, h) \le \varphi_n(h, z_n)$ for all 
$h \in \mathcal{H}$, which yields that $f(x_n + \alpha h_n(z_n)) - f(x_n) \le \alpha \sigma \varphi_n(h_n(z_n), z_n)$
for any sufficiently small $\alpha$ (note that if $\varphi_n(h_n(z_n), z_n) = 0$, then $h_n(z_n) = 0$). Therefore
$\gamma_n \in (0, 1]$ is correctly defined.

Observe that $h_n(z_n)$ is the search direction, and $\gamma_n$ is the step size of the PPP algorithm for the function
$f_{\lambda}$ (see~\cite{Polak}, Algorithm~2.4.1). Consequently, applying ~\cite{Polak}, Theorem~2.4.5 one gets that
there exists $c \in (0, 1)$ depending only on $\gamma$, $\sigma$, $m$ and $M$ such that
$$
  \big[ f_{\lambda}(x_n + \gamma_n h_n(z_n)) - f_{\lambda}(x^*) \big] 
  \le c \big[ f_{\lambda}(x_n) - f_{\lambda}(x^*) \big].
$$
Recall that $\alpha_* \ge 1$. Therefore 
$$
  f(x_n + \alpha_n(z_n) h_n(z_n)) \le f(x_n + \gamma_n h_n(z_n)) \le f_{\lambda}(x_n + \gamma_n h_n(z_n)).
$$
Hence and from the fact that $\lambda \in J(x_n) \subseteq J(x^*)$ it follows that
$$
  \big[ f(x_n + \alpha_n(z_n) h_n(z_n)) - f(x^*) \big] \le c \big[ f(x_n) - f(x^*) \big].
$$
Consequently, taking into account the fact that $f(x_{n + 1}) \le f(x_n + \alpha_n(z_n) h_n(z_n))$ by definition one
gets that the first inequality in \eqref{QuadRegMCD_LinearRateOfConvergence} holds true.

Applying inequality (2.4.9h) from \cite{Polak} one obtains that 
$f_{\lambda}(x) - f_{\lambda}(x^*) \ge m \| x - x^* \|^2 / 2$ for all $x \in \mathbb{R}^d$ and 
$\lambda \in J(x^*)$. Consequently, $f(x) - f(x^*) \ge m \| x - x^* \|^2 / 2$ for any $x \in U$. Combining this
inequality with the first inequality in \eqref{QuadRegMCD_LinearRateOfConvergence} one obtains that the second
inequality in \eqref{QuadRegMCD_LinearRateOfConvergence} is valid.	 
\end{proof}

Let us show that if the QR-MCD converges to a point satisfying a certain \textit{first} order sufficient optimality
condition, then it converges \textit{quadratically}. To this end, let us recall the definition of \textit{Haar
point} \cite{Polak} (or \textit{Chebyshev point} in the terminology of the paper \cite{DaugavenMalozemov}). 

Let $\mathcal{H} = \mathbb{R}^d$. Fix arbitrary $x^* \in \mathbb{R}^d$ and 
$\lambda = (j_1, \ldots, j_s) \in J(x^*)$. Suppose that $x^*$ is an inf-stationary point 
of the function $f_{\lambda}$, i.e. $0 \in \underline{d} f_{\lambda}(x^*)$ (note that $f_{\lambda}$ is
obviously hypodifferentiable). Then there exist $\alpha_{\zeta} \ge 0$, $\zeta = (i_1, \ldots, i_m) \in I := I_1 \times
\ldots \times I_m$, such that $\alpha_{\zeta} = 0$ for all $\zeta \notin I(x^*)$, and
$$
  \sum_{\zeta \in I} \alpha_{\zeta} \Big( \sum_{k = 1}^m \nabla g_{k i_k} (x^*) \Big)
  + \sum_{l = 1}^s \nabla u_{l j_l}(x^*) = 0, \quad
  \sum_{\zeta \in I} \alpha_{\zeta} = 1.
$$
Denote the set of all such $\alpha = \{ \alpha_{\zeta} \}_{\zeta \in I}$ by $\alpha_{\lambda}(x^*)$. The elements of 
the set $\alpha_{\lambda}(x^*)$ are sometimes called \textit{Danskin-Demyanov multipliers} \cite{Polak}.

\begin{definition}
The point $x^*$ is called a \textit{Haar point} of the function $f_{\lambda}$, if
\begin{enumerate}
\item{$0 \in \underline{d} f_{\lambda}(x^*)$,
\label{Def_Stationarity}}

\item{$|I(x^*)| = |I_{1, 0}(x^*)| \times \ldots \times |I_{m, 0}(x^*)| = d + 1$, where $|M|$ is the cardinality
of a set $M$,
\label{Def_EnoughVertices}}

\item{the vectors $\nabla g_{1 i_1}(x^*) + \ldots + \nabla g_{m i_m}(x^*)$, $(i_1, \ldots, i_m) \in I(x^*)$, are
affinely independent,
\label{Def_CompleteAlternance}}

\item{for any $\alpha \in \alpha_{\lambda}(x^*)$ one has $\alpha_{\zeta} > 0$ for all $\zeta \in I(x^*)$.
\label{Def_SuffMinCond}}
\end{enumerate}
\end{definition}

\begin{remark}
Note that if assumptions \ref{Def_EnoughVertices} and \ref{Def_CompleteAlternance} from the definition
above are satisfied, then assumptions \ref{Def_Stationarity} and \ref{Def_SuffMinCond} hold true iff 
$0 \in \interior \underline{\partial} f_{\lambda}(x^*)$. Furthermore, it is easy to see that if $x^*$ is a Haar point of
$f_{\lambda}$, then the set $\alpha_{\lambda}(x^*)$ consists of only one element.
\end{remark}

Let $x^*$ be a Haar point of $f_{\lambda}$. Then by \cite{Polak}, Theorem~2.4.22 (see also~\cite{DaugavenMalozemov}) for
any starting point $x_0$ sufficiently close to $x^*$ the PPP algorithm for the function $f_{\lambda}$ coincides with the
Newton method for solving a certain system of nonlinear equations, and converges quadratically to $x^*$. Thus, there
exists a neighbourhood $U$ of $x^*$ such that for any starting point $x_0 \in U$ the sequence $\{ x_n \}$ generated by 
the PPP algorithm for the function $f_{\lambda}$ stays in $U$ and converges quadratically to $x^*$.

\begin{theorem}
Let $\mathcal{H} = \mathbb{R}^d$, $\alpha_* \ge 1$, $\nu_n \equiv + \infty$, and $\lim_{n \to \infty} \mu_n > 0$. Let
also the gradients of the functions $g_{ki}$ and $u_{lj}$ be locally Lipschitz continuous. Suppose, finally, that a
sequence $\{ x_n \}$ generated by the QR-MCD for the function $f$ converges to a point $x^*$ such that for any 
$\lambda \in J(x^*)$ the point $x^*$ is a Haar point of the function $f_{\lambda}$. Then $x^*$ is a point of strict
local minimum of the function $f$, and $x_n \to x^*$ as $n \to \infty$ quadratically.
\end{theorem}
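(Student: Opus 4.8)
The plan is to reduce everything to the classical quadratic convergence of the PPP algorithm at a Haar point, in the same spirit in which the linear estimate of Theorem~\ref{Th_LinearConvergenceOfQuadRegMCD} was obtained from the PPP theory. First I would establish strict local minimality directly from the Haar hypotheses. For each $\lambda \in J(x^*)$ the point $x^*$ is a Haar point of $f_\lambda$, hence (by the remark following the definition of a Haar point) $0 \in \interior \underline{\partial} f_\lambda(x^*)$. Since $f_\lambda$ is hypodifferentiable with $f_\lambda'(x^*, h) = \max_{v \in \underline{\partial} f_\lambda(x^*)} \langle v, h \rangle$, this inclusion forces a \emph{sharp} minimum: there is $\tilde{c} > 0$ (uniform over the finite set $J(x^*)$) with $f_\lambda(x) - f_\lambda(x^*) \ge \tilde{c} \| x - x^* \|$ for $x$ near $x^*$. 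As in the proof of Theorem~\ref{Th_LinearConvergenceOfQuadRegMCD}, continuity of the $u_{lj}$ gives a neighbourhood on which $J(x) \subseteq J(x^*)$, and since $f(x) = f_\lambda(x)$ for every $\lambda \in J(x)$, the sharp minimum of each $f_\lambda$ is inherited by $f$. This simultaneously yields that $x^*$ is a strict local minimizer and the lower bound $f(x) - f(x^*) \ge \tilde{c} \| x - x^* \|$ near $x^*$.

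The core step is to identify the QR-MCD search directions with PPP directions. Fix a large $n$ with $x_n$ near $x^*$, pick any $\lambda = (j_1, \dots, j_s) \in J(x_n)$, and set $z_\lambda = \big( 0, \sum_{l} \nabla u_{l j_l}(x_n) \big)$; since $\lambda \in J(x_n) = J_0(x_n) \subseteq J_{\mu_n}(x_n)$ one has $z_\lambda \in \overline{d}_{\mu_n} f(x_n)$, and its first coordinate vanishes because $u_{l j_l}(x_n) = u_l(x_n)$. With $\nu_n \equiv +\infty$ the hypodifferential in \eqref{QuadReg_Def} is the full $\underline{d} f(x_n)$, and because this set is a Minkowski sum of convex hulls the supremum over $\underline{d} f(x_n) + z_\lambda$ splits into a sum of maxima; a direct computation then shows that $\varphi_n(\cdot, z_\lambda)$ coincides with the quadratically regularized linearization of $f_\lambda$ at $x_n$. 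Hence $h_n(z_\lambda)$ is precisely the PPP search direction for $f_\lambda$ at $x_n$. Invoking \cite{Polak}, Theorem~2.4.22, which states that near a Haar point the PPP method takes the unit step and coincides with a locally quadratically convergent Newton iteration, I obtain a neighbourhood $U$ of $x^*$ and a constant $C > 0$ such that $\| x_n + h_n(z_\lambda) - x^* \| \le C \| x_n - x^* \|^2$ whenever $x_n \in U$ (the estimate is local and independent of how $x_n$ was produced). Because $J(x^*)$ is finite, $U$, $C$ and a common Lipschitz constant $L$ of the $f_\lambda$ on $U$ may be taken uniform in $\lambda$.

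It remains to convert this into an estimate for the actual QR-MCD iterate. Since $\alpha_* \ge 1$, the step $\alpha = 1$ is admissible, so the exact line search and the minimization over $z$ in the QR-MCD give
\begin{equation*}
  f(x_{n+1}) \le f\big( x_n + \alpha_n(z_\lambda) h_n(z_\lambda) \big) \le f\big( x_n + h_n(z_\lambda) \big) \le f_\lambda\big( x_n + h_n(z_\lambda) \big),
\end{equation*}
the last inequality using $f = \min_{\lambda'} f_{\lambda'} \le f_\lambda$. Subtracting $f(x^*) = f_\lambda(x^*)$ and combining the Lipschitz bound with the PPP estimate yields $f(x_{n+1}) - f(x^*) \le L \, \| x_n + h_n(z_\lambda) - x^* \| \le L C \| x_n - x^* \|^2$. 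Together with the sharp lower bound $f(x_{n+1}) - f(x^*) \ge \tilde{c} \| x_{n+1} - x^* \|$ from the first paragraph (applicable since $x_{n+1} \to x^*$) this gives
\begin{equation*}
  \| x_{n+1} - x^* \| \le \frac{L C}{\tilde{c}} \, \| x_n - x^* \|^2
\end{equation*}
for all sufficiently large $n$, i.e. quadratic convergence.

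I expect the main obstacle to be the precise matching of $\varphi_n(\cdot, z_\lambda)$ with the PPP model of $f_\lambda$ (keeping track of the hypodifferential constants and the Minkowski-sum structure so that $h_n(z_\lambda)$ is genuinely the PPP direction), together with the clean invocation of Polak's local quadratic estimate at an arbitrary point of $U$ rather than at a point already lying on a PPP trajectory. The conversion from the function-value decrease, which is all the QR-MCD step-size rule controls, to the distance decrease is exactly what the sharp-minimum inequality supplies, and it is the reason the first-order Haar condition already suffices for a quadratic rate.
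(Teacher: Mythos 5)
Your proof is correct and follows essentially the same route as the paper's: identify the QR-MCD direction $h_n(z_\lambda)$, $z_\lambda = \big(0, \sum_l \nabla u_{l j_l}(x_n)\big)$, with the PPP direction for $f_\lambda$, invoke Polak's local quadratic-convergence theorem at a Haar point, and convert the function-value decrease (available since $\alpha_* \ge 1$ and the line search is exact) into a distance estimate via Lipschitz continuity and the sharp-minimum growth bound. The only cosmetic differences are that the paper obtains the growth inequality $f_\lambda(x) - f_\lambda(x^*) \ge \varkappa \| x - x^* \|$ by citing Polak's Lemma~2.4.19 rather than deducing it from $0 \in \interior \underline{\partial} f_\lambda(x^*)$, and it compares the line search against the PPP step size $\gamma_n$ rather than against the unit step.
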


\begin{proof}
Note that by Theorem~\ref{Th_QuadReg_GlobalConvergence} the point $x^*$ is an inf-stationary point of the function $f$,
which implies that $0 \in \underline{d} f_{\lambda}(x^*)$ for all $\lambda \in J(x^*)$. Applying \cite{Polak},
Lemma~2.4.19 (see also inequality (2.4.24g)), and the fact that $x^*$ is a Haar point of every function $f_{\lambda}$,
$\lambda \in J(x^*)$, one obtains that there exist a neighbourhood $U_0$ of $x^*$ and $\varkappa > 0$ such that
\begin{equation} \label{MaxFunc_FirstOrderGrowth}
  f_{\lambda}(x) - f_{\lambda}(x^*) \ge \varkappa \| x - x^* \| 
  \quad \forall x \in U_0 \quad \forall \lambda \in J(x^*).
\end{equation}
Hence, as it is easily seen, there exists a neighbourhood $U \subseteq U_0$ of $x^*$ such that 
$f(x) - f(x^*) \ge \varkappa \| x - x^* \|$ for all $x \in U$, i.e. $x^*$ is a point of strict local minimum of the
function $f$.

From the continuity of the functions $u_{lj}$ it follows that there exists a neighbourhood $V \subseteq U$ of $x^*$ such
that $J(x) \subseteq J(x^*)$ for all $x \in V$. Taking into account the fact that $x_n \to x^*$ as $n \to \infty$,
one obtains that there exists $n_0 \in \mathbb{N}$ such that $x_n \in V$ for all $n \ge n_0$.

Fix arbitrary $\sigma, \gamma \in (0, 1)$, $n \ge n_0$ and $\lambda = (j_1, \ldots, j_s) \in J(x_n)$. Define
$z_n = ( 0, \sum_{l = 1}^s \nabla u_{l j_l}(x_n) )$, and let $\gamma_n$ be defined as in the proof of
Theorem~\ref{Th_LinearConvergenceOfQuadRegMCD}. Then $\gamma_n$ is the step size in the PPP algorithm for the function
$f_{\lambda}$. Note that one can choose $n_0$ so large that $x_n$ belongs to the region of quadratic convergence of 
the PPP algorithm for all function $f_{\lambda}$, $\lambda \in J(x^*)$, and a sequence generated
by this algorithms for any function $f_{\lambda}$, $\lambda \in J(x^*)$ with the starting point $x_n$ stays in $V$.
Therefore there exists $Q > 0$ (independent of $n \ge n_0$ and $\lambda \in J(x^*)$) such that 
$$
  \| x_n + \gamma_n h_n(z_n) - x^* \| \le Q \| x_n - x^* \|^2, \quad x_n + \gamma_n h_n(z_n) \in V. 
$$
By Corollary~\ref{Corollary_LipschitzContinuity} the function $f$ is Lipschitz continuous on any bounded set. Hence and
from \eqref{MaxFunc_FirstOrderGrowth} it follows that there exists $L > 0$ such that
\begin{multline*}
  f(x_n + \gamma_n h_n(z_n)) - f(x^*) \le L \| x_n + \gamma_n h_n(z_n) - x^* \| \\
  \le L Q \| x_n - x^* \|^2 \le \frac{L Q}{\varkappa^2} \big[ f(x_n) - f(x^*) \big]^2
\end{multline*}
Recall that $\alpha_* \ge 1$. Hence $f(x_{n + 1}) \le f(x_n + \alpha_n(z_n) h_n(z_n)) \le f(x_n + \gamma_n h_n(z_n))$.
Consequently, one has
$$
  \varkappa \| x_{n + 1} - x^* \| \le f(x_{n + 1}) - f(x^*) 
  \le \frac{L Q}{\varkappa^2} \big[ f(x_n) - f(x^*) \big]^2 \le \frac{L^3 Q}{\varkappa^2} \| x_n - x^* \|^2,
$$
i.e. $x_n \to x^*$ as $n \to \infty$ quadratically.	 
\end{proof}

\section{Conclusion}

In this paper we studied two methods for minimizing a codifferentiable function defined on a Hilbert space. Namely, we
proposed and analysed a generalization of the method of codifferential descent and a quadratic regularization
of the method of codifferential descent. In order to study a convergence of these methods, we introduced a class
of uniformly codifferentiable functions, and derived some calculus rules that allow one to verify whether a given
nonsmooth function is uniformly codifferentiable, and compute the corresponding codifferential mapping. Under some
natural assumptions we proved the global convergence of the methods, as well as the convergence of the inf-stationarity
measure $\omega(x_n, \nu_n)$ to zero. It should be noted that the quadratic regularization of the MCD (as well as the
primal regularization of this method) is the first method for minimizing a codifferentiable function over a convex set.
In the end of the paper we estimated the rate of convergence of the MCD for a class of nonsmooth nonconvex functions.

\section{Acknowledgements}

The author is sincerely grateful to his colleagues T. Angelov, A. Fominyh and G.Sh. Tamasyan for numerous useful
discussions on the method of codifferential descent and its applications that played a significant role in 
the prepration of this paper. Also, the author wishes to express his thanks to professor V.N. Malozemov for pointing out
the fact that the PPP algorithm converges quadratically to a Chebyshev (Haar) point.

\bibliographystyle{abbrv}  
\bibliography{Codiff_Descent_bibl}

\end{document}